\newtheoremstyle{ourthm}
{10pt}
{3pt}
{\slshape}
{}
{\bf\upshape}
{}
{.5em}
{}
\theoremstyle{ourthm}
\newtheorem{theorem}{Theorem}[section]
\newtheorem{lemma}[theorem]{Lemma}
\newtheorem{proposition}[theorem]{Proposition}
\newtheorem{corollary}[theorem]{Corollary}
\theoremstyle{definition}
\newtheorem{example}[theorem]{Example}
\newtheorem{conjecture}[theorem]{Conjecture}
\newtheorem{remark}[theorem]{Remark}
\newenvironment{beweis*}[1]{\noindent{\sl #1}}{\qed}
\newcommand{\ko}{{\mathcal O}}
\newcommand{\IC}{{\mathbb C}}
\newcommand{\IG}{{\mathbb G}}
\newcommand{\IN}{{\mathbb N}}
\newcommand{\IZ}{{\mathbb Z}}
\newcommand{\gothc}{{\mathfrak c}}
\newcommand{\gothg}{{\mathfrak g}}
\newcommand{\gothh}{{\mathfrak h}}
\newcommand{\gothk}{{\mathfrak k}}
\newcommand{\gothl}{{\mathfrak l}}
\newcommand{\gothm}{{\mathfrak m}}
\newcommand{\gotho}{{\mathfrak o}}
\newcommand{\goths}{{\mathfrak s}}
\newcommand{\beeq}[1]{\begin{eqnarray}\label{#1}}
\newcommand{\eneq}{\end{eqnarray}}
\newcommand{\dual}{\makebox[0mm]{}^{{\scriptstyle\vee}}}
\newcommand{\scrdual}{\makebox[0mm]{}^{{\scriptscriptstyle\vee}}}
\newcommand{\isom}{\cong}
\newcommand{\tensor}{\otimes}
\newcommand{\Spec}{{\rm Spec}}
\newcommand{\Hom}{{\rm Hom}}
\newcommand{\id}{{\rm id}}
\newcommand{\rk}{{\rm rk}}
\newcommand{\reg}{{\rm reg}}
\DeclareMathOperator{\LieSl}{SL}
\DeclareMathOperator{\LieSp}{Sp}
\DeclareMathOperator{\LieSpin}{Spin}
\DeclareMathOperator{\LieGl}{GL}
\newcommand{\Liegl}{\gothg\gothl}
\newcommand{\Lieso}{{\goths\gotho}}
\newcommand{\Liesl}{{\goths\gothl}}
\newcommand{\Kern}{{\rm ker}}
\renewcommand{\mod}{{\rm mod}}
\newcommand{\pr}{{\rm pr}}
\newcommand{\lra}{\longrightarrow}
\newcommand{\xra}{\xrightarrow}
\newcommand{\verylongarrow}[1]{\raisebox{0mm}[0.9ex][0ex]{\hbox to #1{\rightarrowfill}}}
\DeclareMathOperator{\tr}{{\rm tr}}
\DeclareMathOperator{\nil}{nil}
\DeclareMathOperator{\Imag}{Im}
\DeclareMathOperator{\diag}{diag}
\newcommand{\SPR}{/\!\!/\!\!/}
\newcommand{\GIT}{/\!\!/}
\DeclareMathOperator{\red}{red}
\DeclareMathOperator{\pf}{pf}
\begin{document}

\title{Towards a symplectic version of the Chevalley restriction theorem}

\author{Michael Bulois}
\address{Michael Bulois\\ Univ Lyon \\ UJM-Saint-Etienne \\ CNRS UMR 5208 \\ Institut Camille Jordan \\ 42023 Saint-Etienne, France}
\email{michael.bulois@univ-st-etienne.fr}

\author{Christian Lehn}
\address{Christian Lehn\\Fakult\"at f\"ur Mathematik\\ Technische Universit\"at Chemnitz\\
Reichenhainer Stra\ss e 39\\09126 Chemnitz, Germany}
\email{christian.lehn@mathematik.tu-chemnitz.de}

\author{Manfred Lehn}
\address{ Manfred Lehn\\
Institut f\"ur Mathematik\\
Johannes Gutenberg--Universit\"at Mainz\\
55099 Mainz, Germany}
\email{lehn@mathematik.uni-mainz.de}

\author{Ronan Terpereau}
\address{Ronan Terpereau\\ Max Planck Institut f\"ur Mathematik\\ Vivatsgasse 7\\ 53111
Bonn, Germany}
\email{rterpere@mpim-bonn.mpg.de}

\subjclass[2010]{Primary 14L30, 53D20; Secondary 20G05, 20G20, 13A50.}
\keywords{polar representation, reductive group, symplectic reduction, symplectic variety}



\begin{abstract}
If $(G,V)$ is a polar representation with Cartan subspace $\gothc$ and
Weyl group $W$, it is shown that there is a natural morphism
of Poisson schemes $\gothc\oplus \gothc^*/W\to V\oplus V^*\SPR G$. This morphism is conjectured to be an isomorphism of the underlying reduced varieties if $(G,V)$ is visible. The conjecture is proved for
visible stable locally free polar representations and some other examples.
\end{abstract} 

\maketitle

\tableofcontents
\section{Introduction}

The quotient of a symplectic complex vector space $(M,\omega)$ by a 
reductive group $G$ does not naturally inherit a symplectic structure,
even if the group action preserves the symplectic form $\omega$. The classical
approach is to consider the moment map $\mu:M\to \gothg^*$ and the quotient
of its null-fibre $M\SPR G:=\mu^{-1}(0)\GIT G$ instead, the so-called 
\emph{Marsden-Weinstein reduction} or \emph{symplectic reduction}. Such symplectic reductions 
arise naturally as local models for the singularities of certain quiver varieties 
or moduli of sheaves on K3 surfaces. Symplectic reductions are, in general,
rather singular spaces and often fail to satisfy Beauville's criteria for
a symplectic singularity \cite{Beau}. They do, however, always carry natural 
Poisson structures.

In this article, we study a particularly interesting class of symplectic reductions
which in many cases turn out to be isomorphic to quotients of finite groups. Dadok 
and Kac \cite{DK85} introduced the concept of a {\sl polar representation} $(G,V)$ 
of a reductive group. One feature of polar representations is the existence
of a linear subspace $\gothc\subseteq V$ with an action of a finite subfactor $W$
of $G$ such that the inclusion $\gothc\to V$ induces an isomorphism 
$\gothc/W\to V\GIT G$. A classical example for such a situation is the adjoint 
action of $G$ on its Lie algebra $\gothg$: if $W$ denotes its 
Weyl group and $\gothh$ a Cartan subalgebra of $\gothg$, then $\gothh/W\isom 
\gothg\GIT G$ by Chevalley's theorem. Now the representations $\gothc\oplus 
\gothc^*$ and $V\oplus V^*$ carry natural symplectic structures that are 
equivariant for the actions of $W$ and $G$, respectively. As a first technical
result we construct a morphism of Poisson schemes
\begin{equation}\label{eq morphism}
r:\gothc\oplus \gothc^*/W\to V\oplus V^*\SPR G.
\end{equation}
This can be seen as a symplectic analogue of Chevalley's
isomorphism, and it is natural to seek for conditions for $r$ to actually be an isomorphism. 

\begin{conjecture}\label{conj} --- Let $(G,V)$ be a visible polar representation of 
a reductive algebraic group $G$. Then $r:\gothc\oplus \gothc^*/W\to
(V\oplus V^*\SPR G)_{\red}$ is an isomorphism of Poisson varieties, where $(V\oplus V^*\SPR G)_{\red}$ denotes the reduced scheme associated to the symplectic reduction $V\oplus V^*\SPR G$.
\end{conjecture}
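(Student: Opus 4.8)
The plan is to work on coordinate rings. Write $M:=V\oplus V^*$ with its $G$-invariant symplectic form and moment map $\mu\colon M\to\gothg^*$, so that $M\SPR G=\mu^{-1}(0)\GIT G$ and $\IC[M\SPR G]=\IC[\mu^{-1}(0)]^G$. The morphism $r$ of \eqref{eq morphism} corresponds to a homomorphism of Poisson algebras $\rho\colon\IC[\mu^{-1}(0)]^G\to\IC[\gothc\oplus\gothc^*]^W$, namely restriction of functions along the inclusion $\gothc\oplus\gothc^*\hookrightarrow\mu^{-1}(0)$. Since $\gothc\oplus\gothc^*/W$ is reduced, $r$ is an isomorphism onto $(M\SPR G)_{\red}$ exactly when $\rho$ is surjective with kernel the nilradical of $\IC[\mu^{-1}(0)]^G$; it is then automatically an isomorphism of Poisson varieties, $r$ being a morphism of Poisson schemes by construction. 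So the proof splits into two independent tasks: \textbf{(I)}~every closed $G$-orbit in $\mu^{-1}(0)$ meets $\gothc\oplus\gothc^*$, and \textbf{(II)}~$\rho$ is surjective. Task (I) already gives the dominance of $r$ --- a $G$-invariant vanishing on $\gothc\oplus\gothc^*$ is constant on orbit closures, hence vanishes on every closed orbit of $\mu^{-1}(0)$ and is therefore nilpotent, so $\ker\rho$ lies in the nilradical; the reverse inclusion is automatic --- as well as the surjectivity of $r$ on closed points.

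For \textbf{(I)}, the geometric core, this is the doubled analogue of the elementary fact that $\gothc$ meets every closed $G$-orbit in $V$, but it is more delicate: for $(v,\varphi)\in\mu^{-1}(0)$ with $G(v,\varphi)$ closed, $Gv$ need not be closed in $V$. I would argue in two moves. First, use the moment map equation $\langle\varphi,\gothg v\rangle=0$ together with the Hilbert--Mumford criterion to control the degenerations of $G(v,\varphi)$ and replace $(v,\varphi)$, within its closed orbit, by a pair with $v$ \emph{semisimple} in $(G,V)$ --- here the visibility hypothesis should rule out the pathologies by bounding the null-cone and hence controlling the fibres of $\mu$ over it --- and then conjugate so that $v\in\gothc$. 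Second, study the residual action of $L:=Z_G(v)$ on the $\varphi$-directions and move $\varphi$ into $\gothc^*$ by an element of $L$, using that the slice representation of $(G,V)$ at the semisimple point $v$ is again polar with $\gothc$ among its Cartan subspaces (Dadok--Kac; see also Panyushev), and that ``stable'' and ``locally free'' keep the generic isotropy small enough that the relevant $L$-subrepresentation of $V^*$ is polar with Cartan subspace a piece of $\gothc^*$. I expect the reduction to semisimple $v$, and the ensuing slice bookkeeping, to be the hardest part.

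For \textbf{(II)}, in the favourable cases I would deduce surjectivity of $\rho$ --- and more --- from the assertion that the representation $(G,M)$ is \emph{itself} polar, with Cartan subspace $\gothc\oplus\gothc^*$ and Weyl group $W$; this is exactly where ``visible'' (to force the dimension equality $\dim(\gothc\oplus\gothc^*)=\dim(M\GIT G)$) and ``stable, locally free'' (to produce a generic semisimple element with the expected isotropy) should be used. Granting polarity of $(G,M)$, the Dadok--Kac Chevalley isomorphism gives $\IC[M]^G\xrightarrow{\ \sim\ }\IC[\gothc\oplus\gothc^*]^W$, and since $\gothc\oplus\gothc^*\subseteq\mu^{-1}(0)$ this factors as
\[
\IC[M]^G\ \epimorph\ \IC[\mu^{-1}(0)]^G\ \xrightarrow{\ \rho\ }\ \IC[\gothc\oplus\gothc^*]^W,
\]
the first arrow being surjective because $G$ is reductive and $\mu^{-1}(0)$ is a closed $G$-subscheme of $M$. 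A surjection whose composite with a further homomorphism is an isomorphism must itself be an isomorphism, and hence so is the other factor; thus $\rho$ is an isomorphism, in particular surjective --- and, incidentally, in these cases $M\SPR G$ is already reduced, while Task (I) comes for free since every point of a Cartan subspace of a polar representation is semisimple. For a general visible polar $(G,V)$, however, $(G,M)$ need not be polar, so this route collapses and one must establish surjectivity of $\rho$ directly --- for instance by an associated-graded or degeneration argument on $\IC[\mu^{-1}(0)]^G$, or by exhibiting explicit $G$-invariant functions on $\mu^{-1}(0)$ that restrict to a generating set of $\IC[\gothc\oplus\gothc^*]^W$. I believe this is the real reason the statement remains a conjecture in general.
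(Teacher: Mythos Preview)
The statement is a conjecture that the paper does not prove in general; it establishes only the special case of Theorem~\ref{thm:maintheorem} (visible, stable, locally free) and some explicit examples. Your decomposition into Tasks~(I) and~(II) is correct and coincides with the paper's own reduction after Proposition~\ref{prop:ontoC0}: the conjecture is equivalent to $r$ being bijective (your~(I)) together with $C_0\GIT G$ being normal (equivalent to your~(II) once~(I) is known).

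The genuine gap is your route to Task~(II). You assert that in the favourable cases $(G,M)=(G,V\oplus V^*)$ is itself polar with Cartan subspace $\gothc\oplus\gothc\dual$, and that visibility forces the dimension equality $\dim(\gothc\oplus\gothc\dual)=\dim M\GIT G$. This is false in every non-trivial instance of Theorem~\ref{thm:maintheorem}. If $(G,V)$ is stable, locally free and polar, the generic stabiliser in $V$ is finite, so $\dim V-\dim\gothc=\dim G$; the generic stabiliser in $V\oplus V^*$ is then also finite and $(G,V\oplus V^*)$ is again stable, whence $\dim M\GIT G=2\dim V-\dim G=\dim V+\dim\gothc$, which equals $2\dim\gothc$ only when $\dim V=\dim\gothc$, i.e.\ when $G$ acts trivially. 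For $(\LieSl_3,S^3\IC^3)$ one gets $12$ versus $4$. Thus the Dadok--Kac isomorphism $\IC[M]^G\xrightarrow{\sim}\IC[\gothc\oplus\gothc\dual]^W$ you invoke simply does not exist, and the factorisation argument collapses already in the cases you call favourable. The paper's proof of Theorem~\ref{thm:maintheorem} proceeds entirely differently: having identified $r$ with the normalisation map (via Propositions~\ref{prop:finiteinjective}, \ref{prop:ontoC0}, Lemma~\ref{lem:IrreducibilityCriterion} and Panyushev's irreducibility result), it proves normality of $M\SPR G$ by induction on $\rk(G,V)=\dim\gothc-\dim(\gothc\cap V^G)$, using a symplectic slice theorem and a formal Darboux theorem (Section~6) to identify completed local rings of $M\SPR G$ with those of $N\oplus N^*\SPR G_w$ for slice representations of strictly smaller rank, a $\IC^*\times\IC^*$-action to propagate normality away from the origin, and Serre's criterion together with Crawley-Boevey's $(S_2)$ theorem at the origin. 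As for Task~(I), the paper settles it only for $\theta$-representations (Proposition~\ref{prop:theta}), exploiting the ambient Jordan decomposition and the graded Jacobson--Morozov theorem; for arbitrary visible polar representations it remains open, and your Hilbert--Mumford sketch, while pointing in the right direction, does not supply the missing structure.
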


Here a representation $(G,V)$ is said to be {\sl visible} if each fibre of the 
quotient map $V\to V\GIT G$ consists of only finitely many orbits. 
The morphism $r:\gothc\oplus\gothc^*/W\to (V\oplus V^*\SPR G)_{\red}$ is known
to be an isomorphism for the adjoint representation 
by results of Richardson \cite{Ric79}, Hunziker \cite{Hun97} (for simple groups), and Joseph \cite{Jos97} (for semisimple groups),
and for isotropy representations associated to symmetric spaces by the work of Panyushev \cite[\S\S~3-4]{Pan94} (for the locally free case) and Tevelev \cite{Tev00} (for the general case). These are special cases of visible polar
representations and belong to the class of $\theta$-representations studied
by Vinberg in \cite{Vin76}. We will review this concept in section \ref{sec:Theta}. 
In the framework of quiver representations, Crawley-Boevey \cite{CB03} has shown that the symplectic reduction is normal, which implies the conjecture for the  $\theta$-representations obtained from quivers. Finally, $r$ is also known to be an isomorphism in some special cases considered in \cite{Leh07}, \cite{Bec09}, and \cite{Ter13}.

In support of the conjecture we cover the following new cases.

\begin{theorem}\label{thm:maintheorem}--- Let $(G,V)$ be a visible stable 
locally free polar representation with Cartan subspace $\gothc$ and Weyl 
group $W$. Then the restriction morphism 
$r: \gothc\oplus\gothc^*/W\lra V\oplus V^*\SPR G$
is an isomorphism of Poisson schemes. 
\end{theorem}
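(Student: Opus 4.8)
The plan is to show that $r\colon\gothc\oplus\gothc^*/W\to V\oplus V^*\SPR G$ is an isomorphism by checking that it is bijective on points, that the target is reduced (indeed irreducible and normal), and that $r$ is birational, so that normality of the target forces $r$ to be an isomorphism. First I would analyse the source: for a visible stable locally free polar representation, the Cartan subspace $\gothc$ has the property that the generic $G$-orbit meets $\gothc$ in exactly one $W$-orbit, $W$ acts on $\gothc$ as a (possibly non-crystallographic) reflection-type group with $\gothc/W\cong V\GIT G$, and the stability hypothesis gives that generic points of $\gothc$ are $G$-stable with finite stabiliser, constant along $\gothc$ up to conjugacy because of local freeness. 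The key point is to transport this to the doubled, symplectic picture: I would show that the moment map $\mu\colon V\oplus V^*\to\gothg^*$, restricted to $\gothc\oplus\gothc^*$, vanishes identically (this should follow from $\gothc\oplus\gothc^*$ sitting inside $\mu^{-1}(0)$, which is presumably how the morphism $r$ in \eqref{eq morphism} was constructed), so that $\gothc\oplus\gothc^*\hookrightarrow\mu^{-1}(0)$ and $r$ is induced by this inclusion after taking $W$- resp.\ $G$-quotients.

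The main steps, in order, are: (1) Dimension count: show $\dim(V\oplus V^*\SPR G)=2\dim\gothc=\dim(\gothc\oplus\gothc^*/W)$, using that $\mu^{-1}(0)$ has the expected dimension $2\dim V-\dim G+\dim(\text{generic stabiliser})$ on the stable locus and that generic stabilisers are finite by local freeness; this is where visibility and stability feed in. (2) Generic injectivity and dominance: on the locus of $\gothc\oplus\gothc^*$ where the $G$-stabiliser is the generic (finite) one, two points lie in the same $G$-orbit iff they lie in the same $W$-orbit — this is the doubled analogue of the defining property of a Cartan subspace, and I would prove it by the standard argument that $\gothg\cdot v\cap\gothc=\gothc$-complement decompositions are preserved under doubling, reducing the claim to the known statement for $V$ itself together with a transversality/symplectic-slice computation. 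Hence $r$ is birational onto its image and the image is dense. (3) Surjectivity: show every closed $G$-orbit in $\mu^{-1}(0)$ meets $\gothc\oplus\gothc^*$; here I would use the Hilbert--Mumford / Kempf--Ness type argument, or the polar-representation machinery of Dadok--Kac, to bring an arbitrary semisimple point into $\gothc\oplus\gothc^*$, exploiting visibility to control the non-closed orbits in a fibre. (4) Normality of the target: argue that $\mu^{-1}(0)$ is a complete intersection (the $\dim\gothg$ moment-map equations cut out the expected codimension), that it is reduced and normal by Serre's criterion — regular in codimension $1$ because the singular locus has high codimension thanks to stability, and Cohen--Macaulay because it is a complete intersection — and that these properties descend to the GIT quotient. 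Then a birational morphism onto a normal variety that is finite (or at least quasi-finite and proper) is an isomorphism; finiteness of $r$ follows since it is $W$-invariant with $\gothc\oplus\gothc^*/W$ already normal and the fibres finite.

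The hard part will be step (4), the normality of $\mu^{-1(0)}\GIT G$ — equivalently showing $\mu^{-1}(0)$ is a reduced complete intersection with singular locus of codimension $\ge 2$. The flatness/complete-intersection property of the moment map is delicate precisely because $0$ may not be a regular value (the generic stabiliser, though finite, is nontrivial, and there are smaller strata where it jumps), so I expect to need the stability hypothesis in an essential way to bound the dimensions of the bad strata $\{v : \dim G_v>0\}$ and $\{v : v\text{ not }G\text{-semistable}\}$ inside $\mu^{-1}(0)$; the locally free hypothesis should ensure the generic stabiliser acts trivially enough that these estimates go through. A secondary obstacle is step (2): proving the doubled Cartan property $G\cdot(x,\xi)\cap(\gothc\oplus\gothc^*)=W\cdot(x,\xi)$ generically. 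I would try to deduce it from Dadok--Kac's result for $(G,V)$ by a deformation/openness argument, or alternatively by identifying $\gothc\oplus\gothc^*$ with a Cartan subspace of the polar representation $(G,V\oplus V^*)$ — one should first check that $V\oplus V^*$ is again polar with Cartan subspace $\gothc\oplus\gothc^*$ and Weyl group $W$, which, if true, reduces everything to the ordinary Chevalley-type statement for this larger representation and makes the whole argument considerably cleaner.
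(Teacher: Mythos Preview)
Your outline has a genuine gap in step (4). You propose to deduce normality of the symplectic reduction from normality of $\mu^{-1}(0)$ itself, arguing that the null-fibre is a complete intersection and hence Cohen--Macaulay, and then checking $R_1$. The complete-intersection and reducedness claims are correct (this is Panyushev's result, cited in the paper). But $\mu^{-1}(0)$ is \emph{not} normal in general: the paper records in Remark~\ref{rk_normal_muz} five explicit stable locally free $\theta$-representations (hence satisfying all the hypotheses of the theorem) for which $\mu^{-1}(0)$ is non-normal. So your proposed route to normality of the quotient is blocked; the singular locus of $\mu^{-1}(0)$ does not always have codimension $\geq 2$, and the stability/local-freeness hypotheses are not enough to force it.

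The paper circumvents this by proving normality of the \emph{quotient} $V\oplus V^*\SPR G$ directly, without ever claiming $\mu^{-1}(0)$ is normal. The mechanism is induction on $\rk(G,V)=\dim\gothc-\dim(\gothc\cap V^G)$: for a nonzero $w\in\gothc$ the slice representation $(G_w,N)$ has strictly smaller rank and inherits all four hypotheses, so by induction $N\oplus N^*\SPR G_w$ is normal; a symplectic slice theorem together with a formal Darboux theorem identifies the completed local ring of $V\oplus V^*\SPR G$ at $[w,0]$ with that of $N\oplus N^*\SPR G_w$ at the origin. A $\IC^*\times\IC^*$-action then propagates normality to the complement of the origin, and finally a result of Crawley--Boevey gives $(S_2)$ at the origin once one bounds $\dim q^{-1}(0)\cap\mu^{-1}(0)$ using visibility. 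None of this is visible in your sketch; the inductive slice argument is the core new idea you are missing.

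Your alternative suggestion, to show that $(G,V\oplus V^*)$ is itself polar with Cartan subspace $\gothc\oplus\gothc^*$ and Weyl group $W$, also fails. Already for the adjoint representation one has $\dim(\gothg\oplus\gothg)\GIT G>2\dim\gothh$ because of the mixed invariants $\tr(x^iy^j)$, so $\gothh\oplus\gothh$ cannot be a Cartan subspace in the Dadok--Kac sense. The whole point of passing to the symplectic reduction rather than the full GIT quotient of $V\oplus V^*$ is precisely to kill these extra invariants.
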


A representation $(G,V)$ is said to be {\sl locally free} if the stabiliser 
subgroup of a general point in $V$ is finite, and {\sl stable} if there is 
a closed orbit whose dimension is maximal among all orbit dimensions. 
The $\theta$-representations are always visible and polar, but are not necessarily
locally free or stable. 
One might wonder how special a visible stable polar representation is, e.g., whether such a representation is automatically a $\theta$-representation. It turns out that this is not the case: in Example \ref{ex visible stable locally free not theta} we provide a family of examples of visible stable locally free polar representations which are not $\theta$-representations.

We first obtained a proof of Theorem \ref{thm:maintheorem} for $\theta$-representations by using their classification, but the proof we present here is more elegant and classification free which is why we are convinced that the framework of polar representations is the one where Conjecture \ref{conj} should be posed.
Further evidence for Conjecture \ref{conj} 
is provided in this paper through a number of examples where we are able to verify the conjecture or, in some cases, even the stronger statement where the symplectic reduction is reduced. For instance, we prove that the conjecture holds for $\theta$-representations with a Cartan subspace of dimension at most $1$ (see Corollary \ref{corollary dim one}); this case is the generic case and could be of major importance in an inductive approach to proving the conjecture for $\theta$-representations.

We will show by counterexamples that the conjecture does not hold in general if the visibility assumption is dropped; see Proposition \ref{prop:nonvisible} and Example \ref{ex nonvisible}. Also, the $16$-dimensional spin 
representation of $\LieSpin_9$, which is the isotropy representation of a symmetric space, provides an example (see \S ~\ref{examples}) where the symplectic reduction is irreducible but non-reduced, whereas, of course, $\gothc\oplus\gothc^*/W$ is always reduced. 
So, unless stronger hypotheses on 
$(G,V)$ are imposed, the symplectic reduction has to be taken with its reduced structure.

Let us note that the question of whether the symplectic reduction $\gothg\oplus\gothg^*\SPR G$ in 
the adjoint case is reduced (resp. normal) from the start is closely related to the classical 
problem of whether the {\sl commuting scheme} 
$$\mu^{-1}(0)=\{(x,y)\in\gothg^2\;|\; [x,y]=0\}$$ is reduced (resp. normal) or not; see for example \cite{Pop08,Pro} for results in this direction.\\

\noindent \textbf{Notation.}
We call an integral separated scheme of finite type over $\mathbb{C}$ a \emph{variety}; in particular, varieties are always irreducible. 
If $X$ is a scheme, then we denote by $X_{\mathrm{red}}$ the corresponding reduced subscheme. 
We always denote by $G$ a reductive algebraic group and by $\gothg$ its Lie algebra .

\section{Symplectic reductions}

Let $M$ be a finite-dimensional complex vector space with a symplectic bilinear
form $\omega$, and let $G$ be a reductive group that acts linearly on $M$ 
and preserves the symplectic structure. Such an action is always Hamiltonian, i.e.\
it admits a moment map $\mu:M\to \gothg^*$, namely $\mu(m)(A)=\mu^A(m)=
\frac12\omega(m,Am)$ for all $m\in M$ and $A\in\gothg$, where $\gothg$ denotes 
the Lie algebra of $G$.
Since $\mu$ is $G$-equivariant, $G$ acts on the fibre $\mu^{-1}(\zeta)$ for any
$\zeta\in (\gothg^*)^G$. The quotient $\mu^{-1}(\zeta)\GIT G$ is referred to as
the \emph{symplectic reduction} or \emph{Marsden-Weinstein reduction}. In this paper, we are
interested only in the case $\zeta=0$ case, which is the only choice for 
semisimple groups anyway, and we write $M\SPR G:=\mu^{-1}(0)\GIT G$. If the group $G$
is finite, the moment map vanishes identically, and the symplectic reduction 
coincides with the ordinary quotient, $M\SPR G=M/G$.

The symplectic reduction tends to be rather singular. Despite its name, 
it fails in general to satisfy the criteria of a symplectic singularity in the 
sense of Beauville since $M\SPR G$ can be reducible or even non-reduced, 
as several of the examples we discuss in this paper show. However, $M\SPR G$ always 
carries a canonical Poisson structure.

Recall that a \emph{Poisson bracket} on a $\IC$-algebra $A$ is a $\IC$-bilinear Lie 
bracket $\{-,-\}:A\times A\to A$ that satisfies the Leibniz rule 
$\{fg,h\}=f\{g,h\}+g\{f,h\}$. An ideal $I\subseteq A$ is a Poisson ideal if
$\{I,A\}\subseteq I$. Kaledin \cite{Kal06} showed that all minimal
primes of $A$ and the nilradical $\nil(A)$ are Poisson ideals. A \emph{Poisson scheme} 
is a scheme $X$ together with a Poisson bracket on its structure sheaf.

A symplectic form on a smooth variety 
induces a Poisson structure in the following way. The Hamilton vector field 
$H_f$ associated to a regular function $f$ is defined by the relation 
$df(\xi)=\omega(H_f,\xi)$ for all tangent vectors $\xi$. Then $\{f,f'\}:=
df(H_{f'})=-df'(H_f)$. The skew-symmetry of the bracket is immediate, and the 
Jacobi identity follows from $d\omega=0$. If $X$ is a normal symplectic variety, 
the bracket $\{f,f'\}$ is defined as the unique regular extension of the 
function $\{f|_{X_{\reg}},f'|_{X_{\reg}}\}$. For a symplectic vector space
$(M,\omega)$ with a basis $x_1,\ldots,x_{2n}$ and a dual basis $y_1,\ldots, 
y_{2n}$ characterised by $\omega(x_i,y_j)=\delta_{ij}$, the Poisson bracket is 
given by $\{f,f'\}=\sum_{i} \frac{\partial f}{\partial x_i}
\frac{\partial f'}{\partial y_i}$.
 
The ideal $I\subseteq \IC[M]$ of the null-fibre $\mu^{-1}(0)$ of the moment map 
is generated by the functions $\mu^A(m)=\frac12\omega(m,Am)$. Their differentials 
equal $d\mu_m^A(\xi)=\omega(\xi,Am)$. Hence, for any regular function $f$, one 
has $\{f,\mu^A\}(m)=-\omega(H_f,Am)=-df_m(Am)$. If $f$ is assumed to be 
$G$-equivariant, then its derivative vanishes in the orbit directions, so that 
$\{f,\mu^A\}=0$ for all $f\in \IC[M]^G$ and all $A\in \gothg$. The Leibniz rule 
now implies $\{I,\IC[M]^G\}\subseteq I$ and hence $\{I^G, \IC[M]^G\}=I^G$ as the bracket is $G$-invariant. This 
signifies that $I^G$ is a Poisson ideal in the invariant ring $\IC[M]^G$ and, 
consequently, that $W\SPR G=\Spec((\IC[M]/I)^G)$ inherits a canonical Poisson 
structure. Note that $(\IC[M]/I)^G=\IC[M]^G/I^G$ due to the linear reductivity 
of $G$. 

In this paper, the relevant symplectic representations take the special form 
$M=V\oplus V^*$ with symplectic form $\omega(v+\varphi, v'+\varphi')
:=\varphi(v')-\varphi'(v)$. Then for any linear action of $G$ on $V$ and the 
corresponding contragredient action on the dual space $V^*$, the diagonal 
action on $V\oplus V^*$ preserves the symplectic structure. The moment map takes 
the special form $\mu^A(v+\varphi)=\varphi(Av)$ for $v+\varphi\in V\oplus V^*$. 
We refer to $V\oplus V^*$ as the {\sl symplectic double} of $V$.

\section{Symplectic double of polar representations}

The concept of a {\sl polar representation} was developed and thoroughly 
investigated by Dadok and Kac. As this notion is central for our purposes, we 
briefly recall here the main concepts and results but refer to 
\cite{DK85} for all proofs.

Let $V$ be a representation of a reductive group $G$. A vector $v\in V$ is {\sl 
semisimple} if its orbit is closed. Such orbits are affine, and by Matsushima's 
criterion the stabiliser subgroup $G_v\subseteq G$ of $v$ is again reductive. A 
semisimple $v$ is {\sl regular} if its orbit dimension is maximal among all 
semisimple orbits. For a regular $v$ consider the linear space $\gothc_v:=
\{x\in V\;|\; \gothg x\subseteq \gothg v\}$. Obviously, $\gothc_v=\gothc_w$ for 
all regular semisimple elements $w\in \gothg_v$. 

Dadok and Kac showed that $\gothc_v$ consists of semisimple elements only and
that it is annihilated by the Lie algebra $\gothg_v$ of the stabiliser subgroup 
of $v$ \cite[Lem.~2.1]{DK85}. Moreover, the natural projection $\gothc_v\to V
\GIT G$ is finite \cite[Prop.~2.2]{DK85} so that in particular its dimension 
is bounded by $\dim\gothc_v\leq \dim V\GIT G$. A representation $V$ is said to be 
{\sl polar} if there is a regular element $v$ such that $\dim \gothc_v=
\dim V\GIT G$. The space $\gothc_v$ is then called a {\sl Cartan subspace} of $V$. 
 
For instance, if $\dim V \GIT G \leq 1$, then $V$ is polar and any non-zero semisimple element generates a Cartan subspace. The adjoint representation $\gothg$ of a semisimple Lie group $G$ 
is polar, and any Cartan subalgebra $\gothh\subseteq\gothg$ is a Cartan subspace. 
The natural representation of $\LieSl_3$ on the space $S^3\IC^3=\IC[x,y,z]_3$ 
of ternary cubic forms is polar, and the Hesse family $\langle x^3+y^3+z^3, 
xyz\rangle$ is a Cartan subspace. 

Assume now that $V$ is a polar representation of $G$. By \cite[Thm.~2.3]{DK85},
all Cartan subspaces are $G$-conjugate. Let $\gothc\subseteq V$ be a fixed Cartan 
subspace, and let $v\in\gothc$ denote a regular semisimple element. Dadok and Kac 
further proved that there is a maximal compact Lie group $K\subseteq G$ and a 
$K$-equivariant Hermitian scalar product  $\langle-,-\rangle$ on $V$ such that all 
vectors of $\gothc$ have minimal length in their orbit, and that $\gothc$ and 
$\gothg\gothc=\gothg v$ are orthogonal to each other \cite[Lem.~2.1]{DK85}.
The quotient of the normaliser subgroup $N_G(\gothc)=\{g\in G\;|\; g(\gothc)=
\gothc\}$ and the centraliser subgroup $Z_G(\gothc)=\{g\in G\;|\; g(x)=x
\text{ for all }x\in \gothc\}$ is a finite group $W=N_G(\gothc)/Z_G(\gothc)$ 
which, in analogy to the case of adjoint representations, is called the 
{\sl Weyl group} of the polar representation; $W$ is generated by complex reflections when $G$ is connected. 
Theorem 2.10 of \cite{DK85} 
states that the inclusion $\gothc\to V$ induces an isomorphism
\begin{equation}\gothc/W\xra{\;\isom\;} V\GIT G,\end{equation}
or, equivalently, $\IC[V]^G\isom \IC[\gothc]^W$. This generalises the Chevalley 
isomorphism in the case of adjoint representations. Since $G_v$ stabilises
the tangent space $\gothg v$ to the $G$-orbit of $v$ and since the definition
of $\gothc=\gothc_v$ depends only on $\gothg v$, the stabiliser subgroup 
acts on $\gothc$, so there are natural inclusions $Z_G(\gothc)\subseteq
G_v\subseteq N_G(\gothc)$ of finite index. All three groups are reductive. 

Our goal is to construct a morphism
\begin{equation}r: \gothc\oplus \gothc^*/W\lra V\oplus V^*\SPR G\end{equation}
and to give conditions for $r$ to be an isomorphism. As the 
functorial projection $V^*\to \gothc^*$ points in the wrong direction the first 
task is to identify an appropriate subspace $\gothc\dual\subseteq V^*$ that splits the 
projection.

Let $U\subseteq V$ be the orthogonal complement to $\gothc\oplus\gothg\gothc$.
By \cite[Prop.~1.3 ii)]{DK85}, the orthogonal decomposition
\begin{equation}\label{eq:Udecomposition}
V=\gothc\oplus \gothg\gothc\oplus U
\end{equation}
is $G_v$-stable, and by \cite[Cor.~2.5]{DK85}, $\dim U\GIT G_v=0$. As 
$N_G(\gothc)$ normalises $Z_G(\gothc)$, the decomposition is also preserved
by the normaliser group $N_G(\gothc)$. Let 
\begin{equation}
\gothc\dual:=\{\varphi\in V^*\;|\;\varphi(\gothg\gothc\oplus U)=0\}.
\end{equation} 

\begin{lemma}--- The subspace $\gothc\dual\subseteq V^*$ depends only on $\gothc$. It is stable
under the action of $N_G(\gothc)$ and pointwise invariant under the action of 
$Z_G(\gothc)$. The natural pairing $\gothc\times\gothc\dual\to \IC$ is 
non-degenerate and induces an $N_G(\gothc)$-equivariant isomorphism 
$\gothc\dual\to \gothc^*$.
\end{lemma}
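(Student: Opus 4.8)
The plan is to prove each assertion of the lemma by unwinding the definition $\gothc\dual=\{\varphi\in V^*\mid \varphi(\gothg\gothc\oplus U)=0\}$ and exploiting the orthogonal decomposition \eqref{eq:Udecomposition}. First I would address the dependence only on $\gothc$: the subspace $\gothg\gothc=\gothg v$ depends only on $\gothc$ by the very definition $\gothc=\gothc_v$, and while the Hermitian scalar product used to define $U$ a priori involves choices, the subspace $U$ is determined as the unique $G_v$-stable (indeed $N_G(\gothc)$-stable) complement of $\gothc\oplus\gothg\gothc$ compatible with the decomposition from \cite[Prop.~1.3 ii)]{DK85}; here I should be a little careful and perhaps argue instead that $\gothc\dual$ is characterized intrinsically, e.g.\ as the annihilator of $\gothg\gothc$ inside the annihilator of $U$, or better, identify $\gothc\dual$ with the subspace of $V^*$ that restricts isomorphically to $\gothc^*$ under the dual of the inclusion — this last description makes the $\gothc$-dependence manifest once the non-degeneracy of the pairing is established. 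So logically I would first prove non-degeneracy, then deduce the intrinsic characterization and hence the independence of choices.

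Next, for the equivariance statements: since the decomposition $V=\gothc\oplus\gothg\gothc\oplus U$ is preserved by $N_G(\gothc)$ (as noted in the excerpt, because $N_G(\gothc)$ normalizes $Z_G(\gothc)$ and the decomposition is $G_v$-stable with $G_v\subseteq N_G(\gothc)$ of finite index), the subspace $\gothg\gothc\oplus U\subseteq V$ is $N_G(\gothc)$-stable, hence its annihilator $\gothc\dual\subseteq V^*$ is stable under the contragredient $N_G(\gothc)$-action. For the pointwise invariance under $Z_G(\gothc)$: an element $z\in Z_G(\gothc)$ acts trivially on $\gothc$, and since $V=\gothc\oplus(\gothg\gothc\oplus U)$ with both summands $Z_G(\gothc)$-stable, the contragredient action of $z$ on $V^*$ is trivial on the annihilator of the first summand, i.e.\ $z$ acts trivially on $\gothc\dual$; one can phrase this by writing $V^*=\gothc\dual\oplus(\gothg\gothc\oplus U)^{\circ}{}'$ dually and noting $z$ is the identity on the first factor. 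Alternatively, the pairing $\langle z\varphi,x\rangle=\langle\varphi,z^{-1}x\rangle=\langle\varphi,x\rangle$ for $x\in\gothc$ together with $z\varphi\in\gothc\dual$ forces $z\varphi=\varphi$ by non-degeneracy.

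For the non-degeneracy of $\gothc\times\gothc\dual\to\IC$: the pairing $V\times V^*\to\IC$ is perfect, and restricting the first argument to $\gothc$ gives a surjection $V^*\epimorph\gothc^*$ with kernel $\gothc^{\circ}=\{\varphi\mid\varphi(\gothc)=0\}$; by \eqref{eq:Udecomposition} we have $V^*=\gothc^{\circ}\oplus\gothc\dual$ as vector spaces (dimension count: $\dim\gothc\dual=\dim V-\dim(\gothg\gothc\oplus U)=\dim\gothc$, and $\gothc\dual\cap\gothc^{\circ}=\{\varphi\mid\varphi(V)=0\}=0$), so the composite $\gothc\dual\hookrightarrow V^*\epimorph\gothc^*$ is an isomorphism of vector spaces. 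This is exactly the statement that $\gothc\times\gothc\dual\to\IC$ is a perfect pairing and that it induces an isomorphism $\gothc\dual\xra{\sim}\gothc^*$. Finally this isomorphism is $N_G(\gothc)$-equivariant because all three spaces $\gothc$, $\gothc\dual$, $\gothc^*$ carry compatible $N_G(\gothc)$-actions (the first two as subspaces of the $N_G(\gothc)$-stable spaces $V$ and $V^*$, the last as the dual of the $N_G(\gothc)$-action on $\gothc$), and the maps in sight — the inclusion $\gothc\dual\hookrightarrow V^*$ and the restriction $V^*\to\gothc^*$ — are manifestly $N_G(\gothc)$-equivariant.

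I expect the only genuinely delicate point to be the \emph{well-definedness}/independence statement: one must make sure that $U$, and therefore $\gothc\dual$, does not secretly depend on the choice of maximal compact subgroup $K$ or Hermitian form. The cleanest route, which I would follow, is to not prove independence directly but rather to deduce it \emph{a posteriori} from the perfect-pairing description: once we know $\gothc\dual\xra{\sim}\gothc^*$ canonically via restriction, and once we know $\gothc\dual$ has the same dimension as $\gothc$ and meets $\gothc^{\circ}$ trivially, it suffices to observe that $\gothg\gothc$ is intrinsic to $\gothc$ and that any two admissible choices of $U$ give complements of $\gothc\oplus\gothg\gothc$ lying in the same $G_v$-isotypic pieces, forcing the annihilators to coincide; or even more simply, one invokes that $\IC[V]^G$-theoretically the relevant data is the $G_v$-module structure of $V/(\gothc\oplus\gothg\gothc)$ which is canonical. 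If a fully choice-free argument proves awkward, an acceptable fallback is to fix the data $(K,\langle-,-\rangle)$ once and for all as in \cite{DK85} and note that all constructions in the paper use this same fixed choice, so that $\gothc\dual$ is well-defined relative to it; the phrase ``depends only on $\gothc$'' then means ``depends only on $\gothc$ among Cartan subspaces, for the fixed ambient choices'', which is all that is needed downstream.
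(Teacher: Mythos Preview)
Your treatment of non-degeneracy and of the $N_G(\gothc)$- and $Z_G(\gothc)$-equivariance is essentially correct and matches the paper's. The genuine gap is exactly where you flag it: the independence of $\gothc\dual$ from the auxiliary Hermitian data. Your proposed route --- prove non-degeneracy first and then deduce a ``choice-free'' description --- does not close the gap, because the isomorphism $\gothc\dual\xra{\sim}\gothc^*$ only says that $\gothc\dual$ is \emph{some} complement to $\gothc^{\circ}$ in $V^*$, and there are many such complements. Your isotypic-piece remark does not pin it down either: while $U$ has no $Z_G(\gothc)$-invariants and $\gothc$ consists entirely of such, nothing prevents $U$ and $\gothg\gothc$ from sharing $G_v$-isotypic components, so two admissible choices of $U$ need not coincide as subspaces. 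Your fallback (fix $K$ and the Hermitian form once and for all) is not what the lemma asserts.

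The paper's argument supplies precisely the missing idea: one does not show that $U$ itself is canonical, only that the \emph{sum} $\gothg\gothc\oplus U$ is. Let $V_0=V^{Z_G(\gothc)}$ and let $V_1$ be its unique $Z_G(\gothc)$-stable complement. Since $\gothc\subseteq V_0$ by definition, and since $\dim U\GIT G_v=0$ forces $U^{Z_G(\gothc)}=0$ (use that $Z_G(\gothc)\trianglelefteq G_v$ has finite index, so $\IC[U]^{Z_G(\gothc)}$ is finite over $\IC[U]^{G_v}=\IC$, hence equal to $\IC$), one gets $U\subseteq V_1$. From $\gothc\subseteq V_0$ one also sees $V_1\subseteq\gothg\gothc\oplus U$, whence $\gothg\gothc\oplus U=\gothg\gothc+V_1$. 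The right-hand side is visibly intrinsic to $\gothc$, so $\gothc\dual=(\gothg\gothc+V_1)^{\circ}$ is as well. This single line is what your proposal is missing.
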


\begin{proof} There is a unique $Z_G(\gothc)$-stable complement $V_1$ to the
invariant subspace $V_0=V^{Z_G(\gothc)}$. By definition, $\gothc\subseteq V_0$, and 
since $\dim U\GIT G_v=0$, one has $U\subseteq V_1$. Hence $\gothg\gothc\oplus U
=\gothg\gothc+V_1$. This shows that $\gothc\dual$ is independent of the choice 
of the Hermitian scalar product. By definition, $\gothc\dual$ pairs trivially 
with $\gothg\gothc\oplus U$, so the pairing with $\gothc$ is non-degenerate. 
The statements about the action of $N_G(\gothc)$ and $Z_G(\gothc)$ follow from 
the $N_G(\gothc)$-equivariance of the decomposition 
\eqref{eq:Udecomposition}. 
\end{proof}

\begin{proposition}--- Let $V$ be a polar representation of a reductive group 
$G$ with Cartan subspace $\gothc$. Then the contragredient representation $V^*$ 
is also polar, and $\gothc\dual$ is a Cartan subspace for $V^*$. The action of 
the Weyl group $W$ on $\gothc\dual$ defines an isomorphism of $W$ with the Weyl 
group of $\gothc\dual$.
\end{proposition}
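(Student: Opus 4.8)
The plan is to exhibit $\gothc\dual$ as a Cartan subspace of $V^*$ by checking the two defining properties: first that $\gothc\dual$ consists of semisimple elements of $V^*$ lying in a single $G$-conjugacy class of such spaces, and second that $\dim\gothc\dual=\dim V^*\GIT G$. For the dimension count, the previous lemma already gives $\gothc\dual\isom\gothc^*$, so $\dim\gothc\dual=\dim\gothc=\dim V\GIT G$; and since $\IC[V^*]^G$ and $\IC[V]^G$ have the same dimension (e.g. because $\dim V^*\GIT G=\dim V-\dim Gv'$ for a generic $v'\in V^*$, and the generic orbit dimensions in $V$ and $V^*$ agree — the Lie algebra map $\gothg\to\gothg\gothc$ has a dual description), we get $\dim V^*\GIT G=\dim V\GIT G=\dim\gothc\dual$. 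So the numerical part is essentially bookkeeping.

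The substantive point is to verify that $\gothc\dual$ is a genuine Cartan subspace, i.e.\ that there is a regular semisimple $\psi\in\gothc\dual$ with $\gothc\dual=\gothc_\psi=\{\chi\in V^*\mid\gothg\chi\subseteq\gothg\psi\}$. The natural approach is to use the Hermitian structure from Dadok--Kac: the $K$-equivariant Hermitian scalar product $\langle-,-\rangle$ on $V$ identifies $V^*$ with $\overline V$ as a $K$-representation, and under this identification the orthogonal decomposition $V=\gothc\oplus\gothg\gothc\oplus U$ becomes an orthogonal $G_v$-stable (indeed $N_G(\gothc)$-stable) decomposition $V^*=\gothc\dual\oplus(\gothg\gothc)\dual\oplus U\dual$ where $(\gothg\gothc)\dual$ and $U\dual$ are the annihilators of $\gothc\oplus U$ and of $\gothc\oplus\gothg\gothc$ respectively. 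One identifies $(\gothg\gothc)\dual$ with $\gothg\psi$ for a suitable regular $\psi$: concretely, pick $v\in\gothc$ regular semisimple, let $\psi=\langle-,v\rangle\in V^*$ (the functional dual to $v$), check $\psi\in\gothc\dual$ and that $\gothg\psi$ is the annihilator of $\gothc\oplus U$, using that the contragredient moment/derivative computations are formal transposes of those for $V$. Then $\gothg\chi\subseteq\gothg\psi$ forces $\chi\perp(\gothc\oplus U)^\perp$-complement arguments to land $\chi$ in $\gothc\dual$, giving $\gothc_\psi=\gothc\dual$; that $\psi$ is regular semisimple follows because $v$ is, the two spaces being interchanged by the conjugate-linear duality which preserves closedness of orbits and orbit dimensions.

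Once $\gothc\dual$ is known to be a Cartan subspace of the polar representation $V^*$, the uniqueness statement from \cite[Thm.~2.3]{DK85} applies and all Cartan subspaces of $V^*$ are $G$-conjugate. For the final clause, I would compute the Weyl group of $V^*$ with respect to the chosen Cartan subspace $\gothc\dual$ as $N_G(\gothc\dual)/Z_G(\gothc\dual)$ and match it with $W=N_G(\gothc)/Z_G(\gothc)$. Since the decomposition $V=\gothc\oplus\gothg\gothc\oplus U$ and its dual are both $N_G(\gothc)$-stable and $g\in G$ normalises $\gothc$ iff it normalises the annihilator $\gothc\dual$ (as $\gothg\gothc\oplus U$ and its annihilator are swapped), one gets $N_G(\gothc)=N_G(\gothc\dual)$ and likewise $Z_G(\gothc)=Z_G(\gothc\dual)$ — for the centraliser one uses that $g$ acts trivially on $\gothc$ iff it acts trivially on $\gothc^*$ iff (via the equivariant isomorphism $\gothc\dual\isom\gothc^*$) it acts trivially on $\gothc\dual$. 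Hence the two Weyl groups coincide as subquotients of $G$, and the action of $W$ on $\gothc\dual$ is, under $\gothc\dual\isom\gothc^*$, the contragredient of its action on $\gothc$; this yields the asserted isomorphism.

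The main obstacle I anticipate is the careful identification $(\gothg\gothc)\dual=\gothg\psi$ and the verification that the chosen $\psi$ is regular semisimple in $V^*$ — everything hinges on transporting the Dadok--Kac structure theory through the Hermitian (conjugate-linear) duality $V\to V^*$, and one must be attentive that this duality is only $K$-equivariant, not $G$-equivariant, so the $G$-theoretic conclusions (closed orbits, orbit dimensions, the form of $\gothc_\psi$) have to be re-derived on the $V^*$ side rather than simply transported. The rest is formal.
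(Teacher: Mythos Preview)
Your approach via the $K$-equivariant semilinear map $\Phi:V\to V^*$, $v\mapsto\langle v,-\rangle$, is exactly the paper's. The obstacle you flag at the end is indeed the crux, and the paper resolves it with two short observations that your outline lacks: (i) since $\gothg=\IC\gothk$ and $\Phi$ is semilinear, $K$-equivariance already forces $\Phi(\gothg w)=\gothg\Phi(w)$ for every $w\in V$, which immediately gives $\gothc\dual=\Phi(\gothc)=\gothc_{\Phi(v)}$ without any further ``complement arguments''; (ii) writing an arbitrary $g\in G$ as $g=k\exp(i\xi)$ with $k\in K$ and $\xi\in\gothk$, one computes $\Phi(gw)=k\exp(-i\xi)\Phi(w)$, so $\Phi(Gw)=G\Phi(w)$ and $\Phi$ bijects closed $G$-orbits in $V$ with those in $V^*$. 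Semisimplicity of $\Phi(w)$ for $w\in\gothc$ then follows from Kempf--Ness via $\langle\Phi(w),\gothg\Phi(w)\rangle=0$. So the $G$-theoretic conclusions \emph{are} transported through $\Phi$; they need not be re-derived on the $V^*$ side.

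With (ii) in hand, every closed orbit in $V^*$ meets $\gothc\dual$, giving $\dim\gothc\dual\geq\dim V^*\GIT G$ directly. This is how the paper handles the dimension count, and it replaces your argument, which is incorrect as stated: the formula $\dim V^*\GIT G=\dim V-\dim Gv'$ for generic $v'$ fails without a stability hypothesis (e.g.\ $\IG_m$ scaling $\IC^2$), and polarity does not imply stability. For the Weyl group, the paper avoids your $N_G(\gothc)=N_G(\gothc\dual)$ claim altogether by invoking $W=N_K(\gothc)/Z_K(\gothc)$ from \cite[Lem.~2.7]{DK85}; since $\Phi$ is genuinely $K$-equivariant and $\Phi(\gothc)=\gothc\dual$, this is transported for free.
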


\begin{proof} The Hermitian scalar product $\langle-,-\rangle$ defines a 
$K$-equivariant semilinear isomorphism 
\begin{equation}\label{eq:Phiintroduced}
\Phi:V\to V^*, v\mapsto \langle v, -\rangle.\end{equation}
Via $\Phi$, $V^*$ inherits a $K$-equivariant Hermitian scalar product
$\langle \varphi,\psi\rangle:=\langle \Phi^{-1}(\psi),\Phi^{-1}(\varphi)\rangle$.
The orthogonality of the decomposition \eqref{eq:Udecomposition} implies that
$\gothc\dual=\Phi(\gothc)$. 
Let $\gothk$ denote the Lie algebra of $K$. The $K$-equivariance of the
Hermitian product implies $\Phi(\gothk w)=\gothk\Phi(w)$ for any $w\in V$. And 
since $\gothg=\IC\gothk$, one also has $\gothg\Phi(w)=\Phi(\gothg w)$. In
particular, for all $w\in \gothc$, $\langle \Phi(w),\gothg \Phi(w)\rangle =0$, so 
$\Phi(w)$ is semisimple and of minimal length in its orbit by the 
Kempf-Ness theorem \cite[Thm.~1.1]{DK85}. We also deduce that $\Phi(v)$ is 
regular and that $\gothc\dual=\{\varphi\in V^*\;|\; \gothg \varphi\subseteq
\gothg \Phi(v)\}=\gothc_{\Phi(v)}$. Any element $g\in G$ may be written in 
the form $g=k\exp(i\xi)$ with $k\in K$ and $\xi\in \gothk$. The $K$-equivariance
and semilinearity of $\Phi$ yield $\Phi(gw)=\Phi(k\exp(i\xi)w)=k\exp(-i\xi)\Phi(w)$
and hence $\Phi(Gw)=G\Phi(w)$ for every $w\in V$. In particular, $\Phi$ 
provides a bijection between closed orbits in $V$ and $V^*$. Necessarily, 
all closed orbits in $V^*$ meet $\gothc\dual$. This implies $\dim \gothc\dual
\geq \dim V^*\GIT G$ and hence that $V^*$ is polar. The statement about the
action of $W$ follows from the fact that the Weyl group can be seen as the
quotient $N_K(\gothc)/Z_K(\gothc)$ according to \cite[Lem.~2.7]{DK85}.
\end{proof}

We continue to use the $K$-equivariant semilinear automorphism $\Phi:V\to V^*$
of \eqref{eq:Phiintroduced}.
The proof of the proposition shows that $\Phi(\gothc)=\gothc\dual$ and 
$\Phi(\gothg\gothc)=\gothg\gothc\dual$. One obtains a $K$-equivariant orthogonal 
decomposition
\begin{equation}\label{eq:dualdecomposition}
V^*=\gothc\dual\oplus \gothg\gothc\dual \oplus U\dual
\end{equation}
with $U\dual:=\Phi(U)$ dual to \eqref{eq:Udecomposition}. As before,
we consider the symplectic form $\omega(v+\varphi, v'+\varphi'):=\varphi(v')
-\varphi'(v)$ on $V\oplus V^*$. The symplectic vector space $(V\oplus V^*,\omega)$
splits into the direct sum of symplectic subspaces $\gothc\oplus \gothc\dual$,
$\gothg\gothc\oplus \gothg\gothc\dual$ and $U\oplus U\dual$. As 
$\omega(\gothc\oplus \gothc\dual, \gothg\gothc\oplus\gothg\gothc\dual)=0$, 
it follows that 
\begin{equation}
\gothc\oplus\gothc\dual\subseteq \mu^{-1}(0),
\end{equation}
where $\mu:V\oplus V^*\to \gothg^*$ denotes the moment map. 

Polar representations behave well under taking slices \cite[\S~2]{DK85}. 
Let $w\in \gothc$ be any element. The orthogonal complement $N_w$ to the
tangent space $\gothg w\subseteq \gothg\gothc$ is stable under $G_w$ and 
contains $\gothc$. The representation of $G_w$ on $N_w$ is called the {\sl slice
representation} at $w$, and Dadok and Kac have shown that $N_w$ is again polar
and that $\gothc$ is a Cartan subspace.

\begin{proposition}\label{prop:finiteinjective}
--- Let $V$ be a polar representation of $G$ with a Cartan 
subspace $\gothc\subseteq V$ and corresponding Weyl group $W$. Let $\gothc\dual
\subseteq V^*$ denote the dual Cartan subspace. Then every $m\in \gothc\oplus 
\gothc\dual$ has a closed $G$-orbit in $V\oplus V^*$ and
$Gm\cap (\gothc\oplus \gothc\dual)=Wm$. Also, the morphism
$$r: \gothc\oplus \gothc\dual/W\lra V\oplus V^*\SPR G$$
is injective and finite.
\end{proposition}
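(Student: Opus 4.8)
The plan is to reduce everything to statements about closed orbits and the affine GIT quotient $V\oplus V^*\GIT G$, then use the finiteness of $\gothc\to V\GIT G$ together with the analogous statement for $\gothc\dual$. First I would establish that every $m=w+\varphi\in\gothc\oplus\gothc\dual$ has a closed $G$-orbit in $V\oplus V^*$. The natural approach is the Kempf--Ness/Hilbert--Mumford criterion: using the $K$-equivariant Hermitian scalar product on $V\oplus V^*$ induced by $\langle-,-\rangle$ (for which the decompositions \eqref{eq:Udecomposition} and \eqref{eq:dualdecomposition} are orthogonal), one checks that $m$ has minimal norm in $Gm$. Since $\gothc$ and $\gothc\dual$ consist of semisimple vectors of minimal length in their respective orbits in $V$ and $V^*$, and since $\gothc\oplus\gothc\dual$ is orthogonal to $\gothg\gothc\oplus\gothg\gothc\dual$ and to $U\oplus U\dual$, one should be able to see that the moment-map-type obstruction vanishes: concretely, for $\xi\in\gothk$ the derivative $\frac{d}{dt}\big|_{t=0}\|\exp(t\xi)m\|^2$ splits into the contributions from the $V$-part and the $V^*$-part, each of which vanishes by minimality of $w$ and of $\varphi$. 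Hence $Gm$ is closed.

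Next I would prove $Gm\cap(\gothc\oplus\gothc\dual)=Wm$. The inclusion $\supseteq$ is clear since $W=N_G(\gothc)/Z_G(\gothc)$ acts on $\gothc\oplus\gothc\dual$ through restriction of the $G$-action (using that $N_G(\gothc)$ preserves $\gothc\dual$ as well, by the Lemma). For $\subseteq$, suppose $gm\in\gothc\oplus\gothc\dual$, say $gm=w'+\varphi'$ with $m=w+\varphi$. Projecting to $V$ via the first factor is not $G$-equivariant in an obvious way, so instead I would argue as in Dadok--Kac: the key point is that $\gothc$ is a Cartan subspace of the slice representation $N_w$ of $G_w$, and one plays off the $V$-component and the $V^*$-component against each other. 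A cleaner route may be: the $G$-orbit of $m$ being closed, $g$ can be taken in $K$ after adjusting by $Z_G$-type elements; then $K$-equivariance of $\Phi$ links the $V$ and $V^*$ parts, and the statement $Gw\cap\gothc=Ww$ in $V$ (Dadok--Kac, \cite[Thm.~2.3]{DK85}, via $\gothc/W\cong V\GIT G$) forces $g$ to normalise $\gothc$. I expect this compatibility step — showing a single $g$ carrying $(w,\varphi)$ into $\gothc\oplus\gothc\dual$ must normalise $\gothc$, rather than two separate elements doing so for the two components — to be the main obstacle, and the slice representation $N_w$ with its $G_w$-action together with the Luna-type étale slice picture is the tool I would use to overcome it.

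Given these two facts, injectivity of $r$ is essentially formal: points of $V\oplus V^*\SPR G$ that lie in the image of $\gothc\oplus\gothc\dual$ are in bijection with closed $G$-orbits meeting $\gothc\oplus\gothc\dual$, hence with $W$-orbits on $\gothc\oplus\gothc\dual$, which is exactly $(\gothc\oplus\gothc\dual)/W$. More precisely, $r$ factors through $(\gothc\oplus\gothc\dual)/W\to V\oplus V^*\GIT G$, and two points of $\gothc\oplus\gothc\dual$ with the same image in the GIT quotient have intersecting orbit closures; since both orbits are closed they coincide, and then $Gm\cap(\gothc\oplus\gothc\dual)=Wm$ gives that the two points are $W$-conjugate.

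For finiteness, I would show the composite $\gothc\oplus\gothc\dual/W\to V\oplus V^*\SPR G\hookrightarrow V\oplus V^*\GIT G$ is finite, which suffices since the second map is a closed immersion (the null-fibre of the moment map is closed and $G$-stable). Finiteness of the composite should follow by combining the finiteness of $\gothc\to V\GIT G$ and $\gothc\dual\to V^*\GIT G$ from \cite[Prop.~2.2]{DK85}: the map $\gothc\oplus\gothc\dual\to (V\GIT G)\times(V^*\GIT G)$ is finite, being a product of finite morphisms, and the natural map $V\oplus V^*\GIT G\to (V\GIT G)\times(V^*\GIT G)$ is a morphism of affine varieties through which, after restriction, the composite factors appropriately; alternatively one checks directly that $\IC[\gothc\oplus\gothc\dual]$ is a finitely generated module over the image of $\IC[V\oplus V^*]^G$, using that already the invariants pulled back from the two factors generate a subalgebra over which $\IC[\gothc]\otimes\IC[\gothc\dual]$ is finite. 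Passing to $W$-invariants preserves finiteness, and a finite morphism composed with a closed immersion, corestricted to its image, is finite; this yields the claim.
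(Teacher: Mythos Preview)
Your proposal is correct and follows essentially the same route as the paper: Kempf--Ness via $\langle m,\gothg m\rangle=0$ for closedness, then reduction to $k\in K$, use of the semilinear isomorphism $\Phi$ to transport $\varphi,\varphi'$ back into $\gothc$, and the slice representation $(G_w,N_w)$ together with \cite[Thm.~2.8]{DK85} to handle the compatibility step you correctly flagged as the crux. Your finiteness argument via the factorisation through $(V\GIT G)\times(V^*\GIT G)$ is a clean way of unpacking what the paper compresses into ``the same arguments as in \cite[Prop.~2.2]{DK85}''; the Luna slice theorem is not actually needed here.
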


\begin{proof} The symplectic double $V\oplus V^*$ comes with a $K$-equivariant
Hermitian scalar product 
$$\langle w+\varphi, w'+\varphi'\rangle=
\langle w,w'\rangle+\langle \varphi,\varphi'\rangle$$ 
inherited from the scalar products on its direct summands. The orthogonality
of the decompositions \eqref{eq:Udecomposition} and \eqref{eq:dualdecomposition}
implies that $\langle m,\gothg m\rangle=0$ for any $m\in\gothc\oplus \gothc\dual$.
It follows from the Kempf-Ness criterion \cite[Thm.~1.1]{DK85} that $m$ is 
semisimple and of minimal length in its $G$-orbit, and that 
$$Gm\cap( \gothc\oplus
\gothc\dual) =Km\cap (\gothc\oplus\gothc\dual).$$
Assume now that $(w,\varphi),(w',\varphi')\in\gothc\oplus \gothc\dual$ belong
to the same $G$-orbit and hence the same $K$-orbit, say $w'=kw$ and $\varphi'=k\varphi$
for some $k\in K$. Up to the action of $W$, we may assume $w'=w$, so that 
$k\in K_w=G_w\cap K$. Let $x=\Phi^{-1}(\varphi),x'=\Phi^{-1}(\varphi')\in\gothc$.
As $\Phi$ is $K$-equivariant, it follows that $kx=x'$. Now $x$ and $x'$ are 
elements in the same $G_w$-orbit and contained in a Cartan subspace $\gothc$
of the slice representation $N_w$. By \cite[Thm.~2.8]{DK85}, $x$ and $x'$ also belong
to the same orbit under the normaliser subgroup $N_{G_w}(\gothc)\subseteq N_G(\gothc)$.
Hence there is an element $\gamma\in W$ with $w'=\gamma w=w$ and $x'=\gamma x$.
In particular, the natural morphism $r: \gothc\oplus \gothc\dual/W\to V\oplus V^*\GIT G$
is injective. The same arguments as in the proof of \cite[Prop.~2.2]{DK85}
show that $r$ is finite.
\end{proof}

If the Cartan subspace is one-dimensional, a simple argument gives something much
stronger.

\begin{proposition}\label{prop:rank1case} --- Let $(G,V)$ be a polar 
representation with a one-dimensional Cartan subspace. Then 
$r:\gothc\oplus\gothc\dual/W\to V\oplus V^*\SPR G$ is a closed immersion.
\end{proposition}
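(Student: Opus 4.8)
The plan is to show that the injective finite morphism $r$ from Proposition \ref{prop:finiteinjective} is in fact a closed immersion by proving that the comorphism $\IC[V\oplus V^*]^G\to \IC[\gothc\oplus\gothc\dual]^W$ is surjective. By the isomorphism $\IC[V]^G\isom\IC[\gothc]^W$ coming from \cite[Thm.~2.10]{DK85} (and the analogous statement for $V^*$ and $\gothc\dual$), pullbacks of $G$-invariants on $V$ already generate $\IC[\gothc]^W$, and similarly $G$-invariants on $V^*$ generate $\IC[\gothc\dual]^W$. So it suffices to produce, inside the image of $r^*$, enough "mixed" invariants: functions on $\gothc\oplus\gothc\dual$ involving both the $\gothc$- and the $\gothc\dual$-coordinates. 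When $\dim\gothc=1$, fix a generator $z$ of $\gothc^*$ and a generator $\zeta$ of $(\gothc\dual)^*$; then $\IC[\gothc\oplus\gothc\dual]=\IC[z,\zeta]$ and $\IC[\gothc\oplus\gothc\dual]^W$ is spanned by the $W$-invariant monomials $z^a\zeta^b$. Since $W$ acts on $\gothc$ (a line) through a finite cyclic group of complex reflections, say via a primitive $d$-th root of unity, $W$ acts on $z$ by $\zeta_d$ and on $\zeta$ by $\zeta_d^{-1}$ (as $\gothc\dual\isom\gothc^*$ as $W$-modules), so $\IC[z,\zeta]^W$ is generated by $z^d$, $\zeta^d$, and $z\zeta$.

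The first two generators $z^d$ and $\zeta^d$ are in the image of $r^*$ by the Chevalley-type isomorphisms for $V$ and $V^*$ respectively. The crux is therefore to realise the single mixed generator $z\zeta$ as the restriction of a $G$-invariant function on $V\oplus V^*$. The natural candidate is the \emph{contraction} pairing: the canonical $G$-invariant bilinear function $p\in\IC[V\oplus V^*]^G$ given by $p(v+\varphi)=\varphi(v)$. I would check that $p$ restricts to a nonzero multiple of $z\zeta$ on $\gothc\oplus\gothc\dual$; this is exactly the statement that the pairing $\gothc\times\gothc\dual\to\IC$ is nondegenerate, which is the content of the Lemma preceding Proposition \ref{prop:finiteinjective}. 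Concretely, if $v_0$ spans $\gothc$ and $\varphi_0=\Phi(v_0)$ spans $\gothc\dual$ with $\varphi_0(v_0)=c\neq 0$, then for $v=zv_0$, $\varphi=\zeta\varphi_0$ one gets $p(zv_0+\zeta\varphi_0)=c\,z\zeta$. Hence $z^d$, $\zeta^d$, and $z\zeta$ all lie in the image of $r^*$, so $r^*$ is surjective, and $r$ is a closed immersion.

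I expect the only real subtlety to be organizational rather than deep: one must be careful that the "$z$-coordinate on $\gothc$" arising from the Chevalley isomorphism $\IC[V]^G\isom\IC[\gothc]^W$ and the "$\zeta$-coordinate on $\gothc\dual$" arising from $\IC[V^*]^G\isom\IC[\gothc\dual]^W$ are compatible with the single pair of dual coordinates used to compute $\IC[\gothc\oplus\gothc\dual]^W$ — i.e. that the same complex reflection group $W$ with the same generating character $\zeta_d$ governs all three computations. This follows from the Proposition identifying the Weyl group of $V^*$ acting on $\gothc\dual$ with $W$ acting on $\gothc$ via the duality $\gothc\dual\isom\gothc^*$, so that the exponent $d$ (the order of the stabiliser character, equivalently the degree of the basic invariant) is the same on both sides. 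Once this bookkeeping is in place, the argument is just the three-line verification above; no estimates or limiting arguments are needed, which is why the one-dimensional case is so much easier than the general conjecture.
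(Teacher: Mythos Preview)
Your proof is correct and follows essentially the same route as the paper: identify $W$ as cyclic of order $m$ acting by $(\zeta_m,\zeta_m^{-1})$ on $\gothc\oplus\gothc\dual$, observe that $\IC[\gothc\oplus\gothc\dual]^W$ is generated by $z^m$, $\zeta^m$, and $z\zeta$, hit the first two via the Chevalley isomorphisms for $V$ and $V^*$, and hit the mixed generator via the canonical pairing $V\otimes V^*\to\IC$. The paper's argument is the same three-line verification, without the extra bookkeeping discussion.
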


\begin{proof} If $\dim \gothc=1$, the Weyl group $W$ is cyclic, say of order 
$m$, and a generator acts on $\gothc\oplus \gothc\dual=\IC^2$ via 
$(x,y)\mapsto (\zeta x, \zeta^{-1} y)$ for some primitive $m$-th root of unity 
$\zeta$. Hence $\IC[\gothc\oplus\gothc\dual]^W\isom \IC[x^m,xy,y^m]$. We have to show that the restriction morphism $(\IC[V]\otimes \IC[V^*])^G\isom(\IC[V\oplus V^*])^G\to \IC[\gothc\oplus\gothc\dual]^W$ is surjective. Since $V$ 
is a polar representation, there are isomorphisms $\IC[V]^G\to \IC[\gothc]^W$ and 
$\IC[V^*]^G\to \IC[\gothc\dual]^W$. Also, the $G$-invariant pairing 
$V\tensor V^*\to\IC$ restricts to the invariant $xy$. This shows that 
$\IC[V\oplus V^*]^G\to \IC[\gothc\oplus\gothc\dual]^W$ is surjective.
\end{proof}

The null-fibre $\mu^{-1}(0)$ of the momentum map contains the $G$-variety
$$C_0:=\overline{G.(\gothc\oplus \gothc\dual)},
$$
which will play a key role in our analysis.

\begin{proposition}\label{prop:ontoC0}
--- The quotient $C_0\GIT G$ is a Poisson subscheme in $(V\oplus V^*)\GIT G$
and hence also in the symplectic reduction $(V\oplus V^*)\SPR G$. Moreover,
$r: \gothc\oplus \gothc\dual/W\to C_0\GIT G$ is a bijective morphism of 
Poisson schemes. 
\end{proposition}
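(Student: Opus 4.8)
The plan is to reduce both claims to one assertion: the restriction homomorphism $\rho\colon\IC[V\oplus V^*]^G\to\IC[\gothc\oplus\gothc\dual]^W$, $f\mapsto f|_{\gothc\oplus\gothc\dual}$ (which does land in the $W$-invariants because $N_G(\gothc)$ acts on $\gothc\oplus\gothc\dual$ through $W$), is a homomorphism of Poisson algebras with kernel exactly $J^G$, where $J\subseteq\IC[V\oplus V^*]$ is the ideal of the variety $C_0$. Note that $\IC[V\oplus V^*]^G$ and $\IC[\gothc\oplus\gothc\dual]^W$ are Poisson subalgebras of the corresponding polynomial rings, since the symplectic brackets are $G$- respectively $W$-invariant. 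The equality $\Ker\rho=J^G$ is immediate: $\gothc\oplus\gothc\dual\subseteq C_0$ gives $J^G\subseteq\Ker\rho$, and a $G$-invariant function vanishing on $\gothc\oplus\gothc\dual$ vanishes on $G.(\gothc\oplus\gothc\dual)$, hence by continuity on $C_0$. Granting that $\rho$ is Poisson, everything follows. Since $C_0\subseteq\mu^{-1}(0)$ one has $I\subseteq J$ for the ideal $I$ of $\mu^{-1}(0)$, so by linear reductivity $\IC[C_0]^G=\IC[V\oplus V^*]^G/J^G$ is a quotient of $\IC[V\oplus V^*]^G/I^G$ and $C_0\GIT G$ is a closed subscheme of $(V\oplus V^*)\SPR G$. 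The kernel of a Poisson homomorphism is a Poisson ideal, so $J^G$ is a Poisson ideal in $\IC[V\oplus V^*]^G$, whence $C_0\GIT G$ is a Poisson subscheme of $(V\oplus V^*)\GIT G$ and a fortiori of $(V\oplus V^*)\SPR G$. Moreover $\rho$ factors as $\IC[V\oplus V^*]^G\twoheadrightarrow\IC[C_0]^G\xrightarrow{r^{\#}}\IC[\gothc\oplus\gothc\dual]^W$ with $r^{\#}$ injective and Poisson, so $r$ is a morphism of Poisson schemes. Finally $r$ is injective and finite by Proposition~\ref{prop:finiteinjective}, hence has closed image; this image equals $\pi(\gothc\oplus\gothc\dual)$ for the quotient map $\pi\colon V\oplus V^*\to(V\oplus V^*)\GIT G$, and is dense in $\pi(C_0)=C_0\GIT G$ since $C_0=\overline{G.(\gothc\oplus\gothc\dual)}$; being closed and dense it equals $C_0\GIT G$, so $r$ is bijective.

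It remains to show that $\rho$ is Poisson, i.e.\ $\{f,g\}|_{\gothc\oplus\gothc\dual}=\{f|_{\gothc\oplus\gothc\dual},g|_{\gothc\oplus\gothc\dual}\}$ for all $f,g\in\IC[V\oplus V^*]^G$. Both sides are regular functions on the irreducible space $\gothc\oplus\gothc\dual$, so it suffices to compare them at a generic point $m=(v,\varphi)$ with $v$ a regular element of $\gothc$. There I would use the decomposition of $V\oplus V^*$ into the pairwise $\omega$-orthogonal symplectic subspaces $A:=\gothc\oplus\gothc\dual$, $B:=\gothg\gothc\oplus\gothg\gothc\dual$, $C:=U\oplus U\dual$ from \eqref{eq:Udecomposition} and \eqref{eq:dualdecomposition}, and split the Hamiltonian vector fields accordingly, $H_f(m)=a+b+c$ and $H_g(m)=a'+b'+c'$. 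The $\omega$-orthogonality of $A,B,C$ gives $\{f,g\}(m)=\omega(a,a')+\omega(b,b')+\omega(c,c')$, and the defining relation $\omega(H_f(m),\xi)=df_m(\xi)$, again combined with that orthogonality, identifies $a$ with the value at $m$ of the Hamiltonian vector field of $f|_A$ computed inside the symplectic space $A$, so $\omega(a,a')=\{f|_A,g|_A\}(m)$. Everything thus reduces to the vanishing of $\omega(b,b')$ and $\omega(c,c')$.

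For the $B$-term: $\gothg m\subseteq B$ (as $\gothg v\subseteq\gothg\gothc$ and $\gothg\varphi\subseteq\gothg\gothc\dual$), and $\gothg m$ is isotropic because $\omega(Xm,Ym)=-\mu^{[X,Y]}(m)=0$ for $m\in\mu^{-1}(0)$; since in addition $\dim\gothg m=\dim\gothg\gothc=\tfrac12\dim B$ (using $\gothg_v=\gothg_\gothc\subseteq\gothg_\varphi$ and $\dim\gothg\gothc\dual=\dim\gothg\gothc$), $\gothg m$ is a Lagrangian subspace of $B$. As $f$ is $G$-invariant, $H_f(m)$ is $\omega$-orthogonal to $\gothg m$, so $b\in(\gothg m)^{\perp_\omega}\cap B=\gothg m$; likewise $b'\in\gothg m$, and hence $\omega(b,b')=0$. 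For the $C$-term: $C=U\oplus U\dual$ is $G_v$-stable ($U$ by \cite[Prop.~1.3~ii)]{DK85}, and $U\dual$ as the annihilator of the $G_v$-stable subspace $\gothc\oplus\gothg\gothc$), hence $G_m$-stable; at a generic $m$ the group $G_m$ has the same identity component as $G_v$, and since $\dim U\GIT G_v=0$ by \cite[Cor.~2.5]{DK85}, the origin is the only closed $G_v^{\circ}$-orbit in $U$ (and, by duality of invariants, in $U^*\cong U\dual$), so $C$ carries no nonzero $G_m$-fixed vector, i.e.\ $(C^*)^{G_m}=0$. Since $G$-invariance of $f$ makes $df_m$ a $G_m$-invariant linear functional on $V\oplus V^*$, this forces $df_m|_C=0$, so $c=0$, and likewise $c'=0$. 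Therefore $\{f,g\}(m)=\omega(a,a')=\{f|_A,g|_A\}(m)$, as required.

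The step I expect to be the main obstacle is the vanishing of the $C$-contribution --- that a $G$-invariant function has no first-order variation in the slice directions $U\oplus U\dual$ along $\gothc\oplus\gothc\dual$. This is precisely where the geometry of polar representations genuinely enters, through the Dadok--Kac vanishing $\dim U\GIT G_v=0$; the rest is bookkeeping with the orthogonal decompositions and the moment-map identity $\omega(Xm,Ym)=-\mu^{[X,Y]}(m)$.
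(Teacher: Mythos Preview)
Your proof is correct and follows essentially the same route as the paper: the orthogonal symplectic decomposition $A\oplus B\oplus C$, the Lagrangian subspace $\gothg m\subseteq B$, and the Dadok--Kac vanishing $\dim U\GIT G_v=0$ to kill the $C$-contribution. The one organisational difference is a genuine improvement: you prove only that the restriction $\rho\colon\IC[V\oplus V^*]^G\to\IC[\gothc\oplus\gothc\dual]^W$ is a Poisson homomorphism and then read off that $J^G=\Ker\rho$ is a Poisson ideal, whereas the paper proves the Poisson-ideal statement separately by computing $\{h,f\}(m)$ for $h\in J$ and invariant $f$ (which requires the extra observation that $dh_m$ vanishes along $\gothc\oplus\gothc\dual$). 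Your packaging avoids this duplication. Two minor expository points: the implication ``$C$ has no nonzero $G_m$-fixed vector $\Rightarrow (C^*)^{G_m}=0$'' uses reductivity of $G_m$ (same multiplicity of the trivial representation in a module and its dual), and ``by duality of invariants'' for $U\dual$ is justified by $(S^nU^*)^H=0\Leftrightarrow(S^nU)^H=0$ for reductive $H$; both are standard but worth making explicit.
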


\begin{proof} Let $J\subseteq \IC[V\oplus V^*]$ denote the vanishing ideal of 
$C_0$. If we can show that $\{h,f\}|_{C_0}=0$ for all $h\in J$ and all 
$f\in \IC[V\oplus V^*]^G$, then the Leibniz rule implies $\{J,\IC[V\oplus V^*]^G\}
\subseteq J$, and hence $\{J^G,\IC[V\oplus V^*]^G\}\subseteq J^G$, which covers the 
first assertion. Moreover, as $J$ is a $G$-equivariant ideal sheaf and 
$G(\gothc\oplus \gothc\dual)$ is dense in $C_0$ by definition, it suffices to
 show that $\{h,f\}$ vanishes in a general point $m=w+\varphi$ of $\gothc\oplus 
\gothc\dual$ for all $h\in J$ and $f\in\IC[V\oplus V^*]^G$. Now the tangent space 
of $V\oplus V^*$ decomposes into symplectic subspaces $\gothc\oplus \gothc\dual$, 
$\gothg\gothc\oplus \gothg\gothc\dual$ and $U\oplus U\dual$, and this decomposition 
is stable under $G_w$. According to a result of Dadok and Kac mentioned above,
the quotient $U\GIT G_w$ is zero-dimensional. This implies that any $G$-invariant 
function $f$ is constant on subsets $m+U$ and $m+U^\perp$. In particular, all 
derivatives of $f$ in $m$ in the directions $U\oplus U\dual$ vanish. On the other 
hand, $h$ vanishes on $C_0$ so that all derivatives of $h$ vanish in the directions 
$\gothc\oplus \gothc\dual$. Thus, the calculation of the Poisson bracket is 
reduced to 
\begin{equation}
\{h,f\}=\sum_{i=1}^{2\ell} \frac{\partial h}{\partial x_i}\frac{\partial f}{\partial y_i},
\end{equation}
where the $x_i$ and the $y_i$ run through a basis and the dual
basis of $\gothg\gothc\oplus \gothg\gothc\dual$, respectively. For a general $m$, the tangent
space $\gothg m\subseteq \gothg\gothc\oplus \gothg\gothc\dual$ is a Lagrangian
subspace: as it is half-dimensional, it suffices to verify that it is isotropic.
Indeed, for any $A,B\in\gothg$, one has 
\begin{equation}
\{Am,Bm\}=\{Aw+A\varphi,Bw+B\varphi\}=A\varphi(Bw)-B\varphi(Aw)=\varphi([B,A]w)=0,
\end{equation}
since $\gothc\dual$ annihilates $\gothg\gothc$ by construction. Let $x_1,\ldots,x_\ell$ 
be a basis of $\gothg m$ and augment it by $x_{\ell+1},\ldots,x_{2\ell}$ to 
form a symplectic basis of $\gothg\gothc\oplus \gothg\gothc\dual$. Then
\begin{equation}
\{h,f\}=\sum_{i=1}^\ell \left(\frac{\partial h}{\partial x_i}
\frac{\partial f}{\partial x_{\ell+i}}- \frac{\partial f}{\partial x_i}
\frac{\partial h}{\partial x_{\ell+i}}\right).\end{equation}
Since both functions $f$ and $h$ are constant along $Gm$, the partial derivatives
$\partial f/\partial x_i$ and $\partial h/\partial x_i$ vanish in $m$, so that 
finally $\{h,f\}(m)=0$.
 
For the last statement, it suffices to show that for any two functions 
$f, f'\in \IC[V\oplus V^*]^G$ one has 
$$\{f,f'\}|_{\gothc\oplus \gothc\scrdual}=
\{f|_{\gothc\oplus \gothc\scrdual}, f'|_{\gothc\oplus\gothc\scrdual}\},$$
where the bracket on the left is that in $\IC[V\oplus V^*]$ and the bracket on 
the right is that in $\IC[\gothc\oplus \gothc\dual]$. Splitting the sum
$$\{f,f'\}=\sum_i \frac{\partial f}{\partial x_i}\frac{\partial f'}{\partial y_i}
$$
into contributions from $\gothc\oplus \gothc\dual$, $\gothg\gothc\oplus\gothg\gothc\dual$
and $U\oplus U\dual$, this amounts to proving that the latter two summands 
only contribute trivially. But this follows from the same arguments as before:
the derivatives of both $f$ and $f'$ vanish in the directions $U\oplus U\dual\oplus 
\gothg m$ due to their $G$-equivariance.

The bijectivity of $r$ is obvious from Proposition \ref{prop:finiteinjective} and the definition of $C_0$.
\end{proof}

From the two propositions above it follows that the conjecture holds if and only if the 
following assertions are true.
\begin{enumerate}
\item The inclusion $C_0\subseteq\mu^{-1}(0)$ induces an isomorphism $C_0\GIT G
\to (V\oplus V^*\SPR G)_{\red}$. This is equivalent to saying that $r:\gothc
\oplus \gothc\dual/W\to V\oplus V^*\SPR G$ is bijective.
\item The variety $C_0\GIT G$ is normal.
\end{enumerate}

\begin{remark} ---
If $(G,V)$ is a stable polar representation such that $\mu^{-1}(0)$ is a normal variety, then the conjecture holds (see Lemma~\ref{lem:IrreducibilityCriterion}).
However, in general $\mu^{-1}(0)$ is not even irreducible; see for instance \cite[\S 4]{PY07} for examples where $\mu^{-1}(0)$ has an arbitrarily large number of irreducible components. 
\end{remark}

\section{The structure of the null-fibre of the moment map}

As before, let $V$ denote a representation of a reductive group $G$, let 
$\pi:V\to V\GIT G$ be the quotient map, and let $\mu:V\oplus V^*\to \gothg^*$ be the 
moment map. The aim of this section is to highlight some properties of the null-fibre $\mu^{-1}(0)$. Part of this content is directly inspired by \cite{Pan94}.

Consider the linear map $f: V\times\gothg\to V\times V$, $(x,A)\mapsto (x,Ax)$
of trivial vector bundles on $V$. For each point $x\in A$, one has $\rk(f(x))=
\dim \gothg x=\dim Gx$. A {\sl sheet} $S$ of $V$ is an irreducible component
of any of the strata $\{x\in V\;|\; \rk f(x)=r\}$, $r\in \IN_0$, and one puts
$r_S:=\rk f(x)$ for any $x\in S$. The number $\mod(G,S)=\dim S-r_S$ is called 
the {\sl modality} of $S$. The restriction $f|_S$ of $f$ to a 
sheet $S$ with its reduced subscheme structure has constant rank. Hence its 
image and its kernel are subbundles of rank $r_S$ and rank $\dim\gothg-r_S$, 
respectively. 
Let $\pr_1:\mu^{-1}(0)\to V$ denote the projection to the first component of 
$V\oplus V^*$. Then 
$$
\pr_1^{-1}(S)=\{(x,\varphi)\in S\times V\;|\; \varphi\perp \gothg x=\Imag f(x)\}
$$
is a subbundle in $S\times V^*$ of rank $\dim V^* -r_S$. In particular, it is 
an irreducible locally closed subset of $\mu^{-1}(0)$ of dimension 
\begin{equation}\label{eq:contributionfromsheetS}
\dim \pr_1^{-1}(S)=\dim S+\dim V^*-r_S=\dim V +\mod(G,S).
\end{equation}
Clearly, $\mu^{-1}(0)=\bigcup_S \pr_1^{-1}(S)$, and
since the set of sheets $S$ of $V$ is finite, it follows that
$$\dim \mu^{-1}(0)=\dim V +\max_S \mod(G,S).$$

A representation $(G,V)$ is  said to be {\sl visible} if each fibre of $\pi$ 
has only finitely many orbits. It is well known that it suffices to require
that the special fibre $\pi^{-1}(\pi(0))$ has only finitely many orbits.

\begin{lemma}\label{lem:VisibilityandModality} --- 
If $V$ is visible, then $\mod(G,S)=\dim \pi(S)$ for each sheet $S$ of $V$.
\end{lemma}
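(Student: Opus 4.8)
The plan is to compare the two natural quantities attached to a sheet $S$: its modality $\mod(G,S) = \dim S - r_S$ and the dimension of its image $\dim \pi(S)$ in $V \GIT G$. One inequality is essentially formal and holds without visibility. Indeed, the restriction $\pi|_S : S \to \pi(S)$ is a dominant morphism of irreducible varieties, so its generic fibre has dimension $\dim S - \dim \pi(S)$. On the other hand, over a general point of $S$ the orbit $Gx$ has dimension exactly $r_S$ and is contained in the fibre of $\pi$ through $x$, which is in turn contained (after intersecting with $S$, up to passing to an irreducible component) in $\pi|_S^{-1}(\pi(x))$. Hence $r_S \le \dim \pi|_S^{-1}(\pi(x)) = \dim S - \dim \pi(S)$, i.e. $\mod(G,S) = \dim S - r_S \ge \dim \pi(S)$. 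This direction uses only that orbits sit inside fibres of the quotient map.

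For the reverse inequality $\mod(G,S) \le \dim \pi(S)$ I would invoke visibility. Fix a general $x \in S$, set $y = \pi(x)$, and consider the fibre $F = \pi^{-1}(y)$ together with $F \cap S$, an irreducible (after restricting to a suitable component) locally closed subset of dimension $\dim S - \dim \pi(S)$ containing $Gx$. The key point is that $F$ contains only finitely many $G$-orbits by visibility, so $F \cap S$ — being irreducible — must be contained in the closure of a single orbit $\overline{Gx'}$ for some $x' \in F$. Since $x$ is general in $S$, the orbit $Gx$ is of maximal dimension $r_S$ among orbits meeting $S$; I would argue that $x'$ can be taken with $\dim Gx' \le r_S$ as well (points of $S$ near $x$ have orbit dimension exactly $r_S$, and $F \cap S$ is dense in some component, so its generic point lies in an orbit of dimension $r_S$). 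Therefore $\dim(F \cap S) \le \dim \overline{Gx'} = \dim Gx' \le r_S$, which reads $\dim S - \dim \pi(S) \le r_S$, i.e. $\mod(G,S) \le \dim \pi(S)$.

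The main obstacle is the bookkeeping around irreducibility and genericity in the second step: $F \cap S$ need not be irreducible, so one must pass to an irreducible component $Z$ of maximal dimension, check that $\dim Z = \dim S - \dim \pi(S)$ (which requires that $\pi|_S$ be dominant onto $\pi(S)$ with equidimensional general fibres, true generically), and then ensure that the finitely-many-orbits property still forces $Z$ into a single orbit closure of the \emph{correct} dimension. The delicate comparison is between the orbit dimension $r_S$ attached to the sheet (maximal over $S$) and the dimension of the orbit whose closure contains $Z$; one must rule out that $Z$ accidentally lands in the closure of a higher-dimensional orbit that does not itself meet $S$ in an open set. I expect this to be handled by noting that $Z$, being locally closed in $S$ and dense in a component, contains points of orbit dimension exactly $r_S$ (since that is the generic orbit dimension on all of $S$), and a locally closed irreducible set contained in $\overline{Gx'}$ that meets the open orbit $Gx'$ must have $\dim \le \dim Gx'$, while if it misses the open orbit it lies in the boundary of strictly smaller dimension — in either case the bound $\dim Z \le r_S$ survives. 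Combining the two inequalities gives $\mod(G,S) = \dim \pi(S)$.
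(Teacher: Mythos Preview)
Your argument is correct, but it is considerably more elaborate than needed, and the ``obstacle'' you worry about in the last paragraph is an artefact of not fully using the definition of a sheet. Recall that a sheet $S$ is an irreducible component of the stratum $\{x\in V\mid \dim Gx=r_S\}$: the orbit dimension is \emph{constant} on $S$, equal to $r_S$, not merely generically equal to $r_S$. Once you use this, both inequalities collapse into a single line, which is exactly what the paper does: for any $y\in\pi(S)$, visibility gives that $\pi^{-1}(y)$ has only finitely many $G$-orbits, hence $\pi|_S^{-1}(y)=\pi^{-1}(y)\cap S$ is a finite union of orbits, each of dimension exactly $r_S$ since they lie in $S$. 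Therefore every fibre of $\pi|_S$ has pure dimension $r_S$, and $\dim S=r_S+\dim\pi(S)$.

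In particular, your careful bookkeeping about choosing an irreducible component $Z$ of $F\cap S$, worrying whether $Z$ could land in the closure of an orbit $Gx'$ with $x'\notin S$, and the dichotomy ``meets the open orbit / lies in the boundary'' is all unnecessary: since $Z\subseteq S$, every point of $Z$ already has orbit dimension $r_S$, and the orbit dense in $Z$ can simply be taken to be $Gx'$ for any $x'\in Z$, giving $\dim Z\le\dim Gx'=r_S$ immediately. Your two-inequality strategy is not wrong, and the first inequality $\mod(G,S)\ge\dim\pi(S)$ does indeed hold without visibility, which is a mildly interesting observation the paper does not isolate; but the whole genericity discussion can be deleted.
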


\begin{proof} Visibility of $V$ implies that the fibres of the restriction $\pi|_S$
have only finitely many orbits, all of dimension $r_S$. Hence all fibres
of $\pi|_S$ have dimension $r_S$, so that $\dim S=r_S+\dim \pi(S)$. 
\end{proof}

Let $(G,V)$ be a polar representation with Cartan subspace $\gothc$. The 
space $V$ always contains two special sheets, as described below. 

Let $r'$ be the maximal dimension of an orbit in $V$. By semicontinuity, the set 
$S'$ of points with $r'$-dimensional orbits is an open and hence irreducible 
subset of $V$, the open sheet of $V$. If follows that $C':=\overline{\pr_1^{-1}
(S')}$ is an irreducible component of the null-fibre $\mu^{-1}(0)$
of dimension $2\dim V-r'$.

On the other hand let $r_0$ denote the maximal dimension of a closed orbit. 
Let $S_0$ be a sheet that contains the open subset $\gothg_{\reg}\subseteq\gothc$
of regular semisimple elements, i.e.\ those with $r_0$-dimensional orbits.
Since $\pi:\gothc_{\reg}\to V\GIT G$ is dominant, so is $\pi: S_0\to V\GIT G$. 
Hence, for a general point $s\in S_0$, there exists a point $x\in \gothc_\reg$
with $\pi(s)=\pi(x)$, where $\gothc_\reg$ denotes the dense open subset of $\gothc$ formed by regular elements. The orbit of $x$ is closed and hence contained in the 
closure of the orbit of $s$. But the two orbits have the same dimension and thus 
must be equal. This shows that $G\gothc_{\reg}\subseteq S_0\subseteq \overline{G\gothc}$. 
In particular, the sheet $S_0$ is uniquely determined.

In general, we have $r_0\leq r'$. A representation $(G,V)$ is said to be 
{\sl stable} if the maximal orbit dimension is attained by orbits of regular 
semisimple elements, i.e.\ if $r_0=r'$. 

\begin{lemma}--- Let $(G,V)$ be a stable polar representation. Then
$C_0$ is an irreducible component of $\mu^{-1}(0)$ of dimension $\dim V
+\dim \gothc$.
\end{lemma}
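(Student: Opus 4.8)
The plan is to identify $C_0$ with (the closure of) the irreducible set $\pr_1^{-1}(S_0)$ attached to the special sheet $S_0$ described just above the statement, and then compute its dimension via \eqref{eq:contributionfromsheetS} together with the visibility input of Lemma \ref{lem:VisibilityandModality}. First I would recall that, by Proposition \ref{prop:finiteinjective}, every $m\in\gothc\oplus\gothc\dual$ has closed $G$-orbit, and for $m=w+\varphi$ with $w$ regular in $\gothc$ the orbit $Gm$ has dimension $r_0=r'$ (using stability) — in fact $\dim Gm=\dim\gothg m=\dim\gothg\gothc$ since $\gothg m\subseteq\gothg\gothc\oplus\gothg\gothc\dual$ is Lagrangian in that space by the computation in the proof of Proposition \ref{prop:ontoC0}, and for generic $w$ one has $\dim\gothg w=\dim\gothg\gothc=:\ell$. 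So the generic point of $\gothc\oplus\gothc\dual$ projects under $\pr_1$ into the sheet $S_0$.

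Next I would observe that $C_0=\overline{G.(\gothc\oplus\gothc\dual)}$ is irreducible of dimension $\dim G + \dim(\gothc\oplus\gothc\dual) - \dim G_m$ for generic $m$; since $G_m$ is finite (the stabiliser of a regular semisimple element of a polar representation is, up to finite index, $Z_G(\gothc)$, which acts trivially on $\gothc\dual$ too, so $G_m\subseteq G_w$ is finite once $(G,V)$ is, say, generically... — more safely: $\dim G_m=\dim G-\dim Gm=\dim G-\ell$). Hence
\begin{equation}
\dim C_0=\dim G+2\dim\gothc-(\dim G-\ell)=2\dim\gothc+\ell.
\end{equation}
On the other hand $C_0\subseteq\overline{\pr_1^{-1}(S_0)}$ since $G.(\gothc_\reg\oplus\gothc\dual)$ maps dominantly to $G\gothc_\reg\subseteq S_0$ and lands in the subbundle $\pr_1^{-1}(S_0)$; conversely the generic fibre of $\pr_1|_{C_0}$ over $S_0$ has dimension $\dim V^*-r_0=\dim V-\ell$, matching the fibre dimension of the subbundle $\pr_1^{-1}(S_0)\to S_0$, and by irreducibility this forces $C_0=\overline{\pr_1^{-1}(S_0)}$. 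Then \eqref{eq:contributionfromsheetS} gives $\dim C_0=\dim V+\mod(G,S_0)$, and Lemma \ref{lem:VisibilityandModality} gives $\mod(G,S_0)=\dim\pi(S_0)=\dim(V\GIT G)=\dim\gothc$ (using that $S_0$ dominates $V\GIT G$, as noted, and that $\gothc/W\isom V\GIT G$). Therefore $\dim C_0=\dim V+\dim\gothc$, which also reconciles with $2\dim\gothc+\ell$ because $\dim V=\dim\gothc+\ell+\dim U$ and $\dim U\GIT G_v=0$... — here one needs $\dim U=\ell$? No: rather one just takes $\dim C_0=\dim V+\dim\gothc$ as the cleaner of the two computations and does not worry about $\dim U$.

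Finally, to see that $C_0$ is an \emph{irreducible component} and not merely an irreducible subvariety of $\mu^{-1}(0)$, I would invoke the dimension formula $\dim\mu^{-1}(0)=\dim V+\max_S\mod(G,S)$ established earlier, combined with the fact that $C':=\overline{\pr_1^{-1}(S')}$ is an irreducible component of $\mu^{-1}(0)$ of dimension $2\dim V-r'$; by stability $r'=r_0=\ell$ and $\mod(G,S_0)=\dim\gothc$, and one checks $\dim C'=\dim V+\mod(G,S')$ with $\mod(G,S')=\dim\pi(S')\le\dim(V\GIT G)=\dim\gothc$, so no sheet produces a piece of $\mu^{-1}(0)$ of dimension exceeding $\dim V+\dim\gothc$; since $C_0$ already has that dimension and is irreducible and closed in $\mu^{-1}(0)$, it is a full component. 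The main obstacle I expect is the bookkeeping around the fibre dimension of $\pr_1|_{C_0}$ — showing carefully that $C_0$ fills up the whole subbundle $\pr_1^{-1}(S_0)$ rather than a proper subvariety of it — and relatedly making sure the generic stabiliser $G_m$ for $m\in\gothc\oplus\gothc\dual$ is genuinely finite so that the orbit-dimension count $\dim Gm=\dim G-0$ is valid; this last point should follow from Proposition \ref{prop:finiteinjective} (closed orbit, and $r$ finite) but deserves an explicit sentence.
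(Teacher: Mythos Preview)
Your plan has a genuine gap: the lemma assumes only that $(G,V)$ is \emph{stable} and polar, not that it is visible, yet you invoke Lemma~\ref{lem:VisibilityandModality} to obtain $\mod(G,S_0)=\dim\pi(S_0)$ and, more seriously, to bound $\mod(G,S)$ for \emph{every} sheet $S$ in your ``maximal dimension hence component'' argument. Without visibility this bound is simply false: in Example~\ref{example_non_red}(3), the stable polar representation $(\LieSl_3,\IC^{3\times 3})$ has $\dim V+\dim\gothc=10$, yet $\mu^{-1}(0)$ has two components of dimension $11$; so $C_0$ is a component but certainly not one of maximal dimension. Your proposed route to ``irreducible component'' therefore cannot work in the stated generality.

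The paper's proof avoids both problems by exploiting a fact you overlook: for a \emph{stable} polar representation, the subspace $U$ in the decomposition $V=\gothc\oplus\gothg\gothc\oplus U$ is zero \cite[Cor.~2.5]{DK85}. Two consequences follow immediately. First, stability gives $S_0=S'$, and since $S'$ is open in $V$ the set $\pr_1^{-1}(S')$ is open in $\mu^{-1}(0)$, so its closure $C'$ is automatically an irreducible component---no dimension comparison across sheets is needed. Second, with $U=0$ the fibre of $\pr_1$ over any $w\in\gothc_{\reg}$ inside $\mu^{-1}(0)$ is exactly $\{w\}\times\gothc\dual$; hence $G(\gothc_{\reg}\times\gothc\dual)$ is dense in $\pr_1^{-1}(S_0)$ and $C_0=\overline{\pr_1^{-1}(S_0)}=C'$. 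The dimension then comes from \eqref{eq:contributionfromsheetS} together with the observation that stability makes the general fibre of $\pi$ a single closed orbit, giving $\mod(G,S_0)=\dim V-r_0=\dim V\GIT G=\dim\gothc$ directly (no visibility needed). Your attempted fibre-dimension matching to show $C_0=\overline{\pr_1^{-1}(S_0)}$ is essentially circular, since both sides of the comparison silently use $\dim V=\dim\gothc+r_0$, which is precisely the content of $U=0$.
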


\begin{proof} If $(G,V)$ is stable, the regular sheet $S_0$ and the open sheet 
$S'$ coincide. Also, in this case, the general fibre of the quotient $\pi:V\to 
V\GIT G$ is closed. Therefore, the modality of the sheet $S_0$ equals 
$\dim \gothc$. Hence $\overline{\pr_1^{-1}(S_0)}$ is an irreducible component 
of $\mu^{-1}(0)$ of dimension $\dim V+\dim \gothc$. It remains to show that 
the inclusion $C_0\subseteq \pr_1^{-1}(S_0)$ is an equality. Indeed, if $V$ is 
stable, the space $U$ in the decomposition \eqref{eq:Udecomposition} is trivial 
according to \cite[Cor.~2.5]{DK85}. Thus, $G(\gothc_{\reg}\times\gothc\dual)$ 
is a dense subset of $\pr^{-1}(S_0)$, and $C_0=\overline{\pr^{-1}(S_0)}$.
\end{proof}

\begin{proposition}--- Let $(G,V)$ be a visible polar representation. Then the following properties hold.
\begin{enumerate}
\item $\dim \mu^{-1}(0) =\dim V +\dim \gothc $. 
\item The irreducible components of $\mu^{-1}(0)$ of maximal dimension are 
in bijection with sheets $S$ such that $\pi(S)\subseteq V\GIT G$ is dense.
\item If $(G,V)$ is stable, then $C_0$ is the only irreducible 
component of $\mu^{-1}(0)$ of maximal dimension.
\item If $(G,V)$ is unstable, then $\mu^{-1}(0)$ has several 
irreducible components of maximal dimension.
\end{enumerate}
\end{proposition}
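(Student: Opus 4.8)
The plan is to combine the dimension formula $\dim\pr_1^{-1}(S)=\dim V+\mod(G,S)$ from \eqref{eq:contributionfromsheetS} with the visibility identity $\mod(G,S)=\dim\pi(S)$ of Lemma~\ref{lem:VisibilityandModality}. First I would observe that, since $V$ is polar, the image of the quotient map satisfies $\dim V\GIT G=\dim\gothc$, and since $\pi$ is surjective onto $V\GIT G$ and $\pi(S)$ is constructible with closure contained in $V\GIT G$, we always have $\dim\pi(S)\leq\dim\gothc$ for every sheet $S$. Putting these together, $\dim\pr_1^{-1}(S)=\dim V+\dim\pi(S)\leq\dim V+\dim\gothc$ for all $S$. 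Equality holds precisely when $\pi(S)$ is dense in $V\GIT G$, which happens at least for the regular sheet $S_0$ containing $\gothc_{\reg}$, for which $\pi(S_0)\supseteq\pi(\gothc_{\reg})$ is dense. Taking the maximum over the finitely many sheets in $\mu^{-1}(0)=\bigcup_S\pr_1^{-1}(S)$ then gives (1), and (2) is immediate: an irreducible component of $\mu^{-1}(0)$ of maximal dimension is necessarily of the form $\overline{\pr_1^{-1}(S)}$ (any component must be, being a union of the locally closed irreducible pieces $\pr_1^{-1}(S)$) with $\dim\pi(S)=\dim\gothc$, i.e. $\pi(S)$ dense; conversely each such sheet gives a component of maximal dimension, and distinct sheets give distinct components since the $\pr_1^{-1}(S)$ are disjoint and irreducible of the same dimension, so no one can be contained in the closure of another.

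For (3), assume $(G,V)$ is stable. Then $r_0=r'$, so the regular sheet $S_0$ coincides with the open sheet $S'$, and by the preceding lemma $C_0=\overline{\pr_1^{-1}(S_0)}$ is an irreducible component of maximal dimension. It remains to see it is the only one. By (2), any other maximal component corresponds to a sheet $S$ with $\pi(S)$ dense in $V\GIT G$. I would argue that for a general point $s\in S$ there is a regular semisimple $x\in\gothc_{\reg}$ with $\pi(s)=\pi(x)$ (by dominance of $\pi|_S$), so the closed orbit $Gx$ lies in $\overline{Gs}$; in the stable case $\dim Gx=r_0=r'$ is the maximal orbit dimension, so $\dim Gs\leq r'=\dim Gx$ forces $Gs=Gx$ to be closed, hence $r_S=r'$ and $S\subseteq S'=S_0$. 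Since $S_0$ is a sheet (an irreducible component of its stratum) and $S\subseteq S_0$ is too, we get $S=S_0$. So $C_0$ is the unique maximal component.

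For (4), assume $(G,V)$ is unstable, i.e. $r_0<r'$. Then the open sheet $S'$ has $r_{S'}=r'>r_0$ and hence cannot contain any closed orbit of dimension $r_0$; in particular $S'\neq S_0$. I would check that $C'=\overline{\pr_1^{-1}(S')}$ has dimension $2\dim V-r'$, and since $V$ is visible, $\mod(G,S')=\dim\pi(S')=\dim V-r'+\mod(G,S')$... more directly: by Lemma~\ref{lem:VisibilityandModality}, $\dim\pi(S')=\mod(G,S')=\dim S'-r'$, and since $S'$ is open and dense in $V$, $\dim S'=\dim V$, so $\dim\pi(S')=\dim V-r'$. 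But also $\dim V\GIT G=\dim\gothc$, and I claim $\dim V-r'=\dim\gothc$ in the unstable case as well is \emph{not} what we want; rather, the point is simply that $\pi(S')$ is dense (being the image of a dense open set), so by (2) $C'$ is a component of maximal dimension, and $C'\neq C_0$ since $S'\neq S_0$. Thus $\mu^{-1}(0)$ has at least two maximal components. The main obstacle is the bookkeeping in (3)–(4) to make sure the sheet containing $\gothc_{\reg}$ really is forced to equal $S'$ exactly when $r_0=r'$, and to confirm that in the unstable case $\pi(S')$ is genuinely dense so that (2) applies — but both follow from the density of $\gothc_{\reg}$ in the quotient and the openness of $S'$ in $V$, already established in the text preceding the proposition.
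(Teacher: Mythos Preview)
Your argument is correct and follows the same approach as the paper's proof, which invokes Lemma~\ref{lem:VisibilityandModality} to bound $\mod(G,S)\leq\dim\gothc$ with equality for $S_0$, then handles (3) and (4) by showing that in the stable case $S_0$ is the unique sheet dominating $V\GIT G$, while in the unstable case both $S'$ and $S_0$ do. One small notational slip: in (4) you write ``$C'\neq C_0$ since $S'\neq S_0$'', but in the unstable case $C_0=\overline{G(\gothc\oplus\gothc^\vee)}$ need not coincide with $\overline{\pr_1^{-1}(S_0)}$ (the equality was only established in the preceding lemma under the stability hypothesis); what you want is that $\overline{\pr_1^{-1}(S')}$ and $\overline{\pr_1^{-1}(S_0)}$ are distinct maximal components, which is exactly what the paper asserts and what your sheet argument actually proves.
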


\begin{proof} The first assertion is a consequence of Panyushev's results 
\cite[Thm.~2.3, Cor.~2.5, Thm.~3.1]{Pan94}. Alternatively, it follows from Lemma 
\ref{lem:VisibilityandModality} that the modality of each sheet is bounded by 
$\dim V\GIT G=\dim \gothc$. This bound is attained by $S_0$. The same argument
proves the second assertion.

If in addition $V$ is stable, then $S_0$ is the only sheet whose image
under $\pi$ is dense in $V\GIT G$. If, on the other hand, $V$ is unstable,
then $S'$ and $S_0$ are distinct and dominate $V\GIT G$, contributing 
two different irreducible components of maximal dimension $d$.
\end{proof}

\begin{example}--- We list a few examples that illustrate the decomposition
of the null-fibre into irreducible components.\label{example_non_red}

1. The standard representation of $\LieSp(V)$ on $V$ is polar
with zero-dimensional Cartan subspace. There are only two sheets: The regular 
sheet $S_0=\{0\}$ and the open sheet $S_1=V\setminus\{0\}$, each forming a
single orbit, so that $V$ is certainly visible. Both sheets contribute
an irreducible component of maximal dimension to $\mu^{-1}(0)=V\times\{0\}\cup 
\{0\}\times V^*$, and $C_0=\{(0,0)\}$ is their intersection.

2. Let $V=V_0\oplus V_1\oplus V_2$ be a $\IZ/3\IZ$-graded vector space with 
components of dimensions $\dim V_0=1$ and $\dim V_1=\dim V_2=2$, and consider 
the corresponding decomposition of the Lie algebra $\Liegl(V)=\gothg_0
\oplus \gothg_1\oplus \gothg_2$. Then $\gothg_0$ is the Lie algebra of $G_0=\LieGl(V_0)\times\LieGl(V_1)\times \LieGl(V_2)$, and
$\gothg_1=\Hom(V_0,V_1)\oplus \Hom(V_1,V_2)\oplus \Hom(V_2,V_0)$ is a visible
unstable polar representation of $G_0$. This is an example of a $\theta$-representation
to be discussed in \S \ref{sec:Theta}. The invariant ring $\IC[\gothg_1]^{G_0}$ is generated
by the function $(a,b,c)\mapsto \tr(abc)$, and if $a:V_0\to V_1$, $b:V_1\to V_2$ and $c:V_2\to V_0$ are rank-one maps such that $\tr(abc)$
is non-zero then the space spanned by $(a,b,c)$ is a Cartan subspace. An explicit
calculation shows that $\mu^{-1}(0)$ has eight irreducible components, six of
dimension 8 and two of dimension 9. The latter two intersect in $C_0$, which is
of dimension $8$. The symplectic reduction $V\oplus V^*\SPR G$ is an $A_2$-singularity. Since
$\gothg_1$ can also be seen as a space of representations of a quiver, the 
results of Crawley-Boevey apply and the normality of $V\oplus V^*\SPR G$ 
is given by \cite[Thm.~1.1]{CB03}.

3. The representation of $\LieSl_3$ on $V=\IC^{3\times 3}$ is stable and polar 
but non-visible. The null-fibre of the moment map equals the space of
pairs of $3\times 3$ matrices $(a,b)$ such that $ab=\frac13\tr(ab)I_3$. It is
11-dimensional with two components of dimension 11 and one component $C_0$ of
dimension 10. The symplectic reduction $V\oplus V^*\SPR \LieSl_3$ is non-reduced.
It has two irreducible components of dimensions 4 and 2, the latter
being $C_0\GIT \LieSl_3$, an $A_2$-surface singularity. In particular, the normality result of \cite{CB03} no longer holds in the general setting of polar representations.
\end{example}

\begin{lemma}\label{lem:IrreducibilityCriterion}
--- Let $(G,V)$ be a stable polar representation. If
$V\oplus V^*\SPR G$ is irreducible then $C_0\GIT G=(V\oplus V^*\SPR G)_{\red}$.
\end{lemma}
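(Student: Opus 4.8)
The statement to prove is Lemma~\ref{lem:IrreducibilityCriterion}: for a stable polar representation $(G,V)$, if $V\oplus V^*\SPR G$ is irreducible, then $C_0\GIT G=(V\oplus V^*\SPR G)_{\red}$.

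The plan is to compare dimensions and use the irreducibility hypothesis to force the inclusion $C_0\GIT G\subseteq (V\oplus V^*\SPR G)_{\red}$ to be an equality of varieties. First I would recall from Proposition~\ref{prop:ontoC0} that $C_0\GIT G$ is a closed Poisson subscheme of $V\oplus V^*\SPR G$, and that $r:\gothc\oplus\gothc\dual/W\to C_0\GIT G$ is a bijective morphism; in particular $C_0\GIT G$ is irreducible of dimension equal to $\dim(\gothc\oplus\gothc\dual)=2\dim\gothc$ (the bijectivity from Proposition~\ref{prop:finiteinjective}, together with $r$ being finite, gives equality of dimensions). Next, since $(G,V)$ is stable, the preceding lemma shows that $C_0$ is an irreducible component of $\mu^{-1}(0)$ of dimension $\dim V+\dim\gothc$, and $C_0$ is $G$-stable with the general $G$-orbit in $C_0$ of maximal dimension $r'=r_0$ (using that a general point of $\gothc\oplus\gothc\dual$ has closed orbit of dimension $r_0$, as in Proposition~\ref{prop:finiteinjective}); hence $\dim C_0\GIT G=\dim C_0-r_0=\dim V+\dim\gothc-r_0$. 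But by the same stability argument $\dim V\GIT G=\dim\gothc$ and a general fibre of $V\to V\GIT G$ is a closed orbit of dimension $r_0$, so $\dim V - r_0=\dim\gothc$ and therefore $\dim C_0\GIT G=2\dim\gothc$, consistent with the count via $r$.

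The key remaining input is the dimension of $\mu^{-1}(0)\GIT G$. Here I would invoke the proposition on the structure of the null-fibre: for stable polar (which is in particular covered once we know visibility, but more directly) $\dim\mu^{-1}(0)=\dim V+\dim\gothc$ and $C_0$ is a component of maximal dimension. Taking GIT quotients, $\dim(V\oplus V^*\SPR G)=\dim\mu^{-1}(0)-r_0$ on the component through $C_0$, which again equals $2\dim\gothc=\dim C_0\GIT G$. Thus $C_0\GIT G$ is a closed irreducible subscheme of $V\oplus V^*\SPR G$ of dimension equal to $\dim(V\oplus V^*\SPR G)$ along that component. Now the irreducibility hypothesis enters: if $(V\oplus V^*\SPR G)_{\red}$ is irreducible, then it has a single component, which must be the one containing $C_0\GIT G$; since $C_0\GIT G$ is a closed irreducible subscheme of the same dimension as the ambient irreducible reduced scheme, and $(C_0\GIT G)_{\red}$ is reduced and irreducible, the closed immersion $(C_0\GIT G)_{\red}\hookrightarrow (V\oplus V^*\SPR G)_{\red}$ between irreducible varieties of the same dimension, with the source closed, must be surjective, hence an isomorphism onto $(V\oplus V^*\SPR G)_{\red}$. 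But $C_0\GIT G$ is already reduced: it is $\gothc\oplus\gothc\dual/W$ up to the bijective morphism $r$, and more to the point, the argument really only needs the scheme-theoretic image, so I would phrase the conclusion as $(C_0\GIT G)_{\red}=(V\oplus V^*\SPR G)_{\red}$, and then remark that $C_0\GIT G\to (V\oplus V^*\SPR G)_{\red}$ factors as claimed; if one wants $C_0\GIT G$ itself (not just its reduction) to equal the reduction of the symplectic reduction, one notes $C_0\GIT G=\Spec(\IC[V\oplus V^*]/J)^G$ and that $J$ is radical since $C_0$ is a variety and taking $G$-invariants preserves radicality by reductivity.

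The main obstacle is the bookkeeping of dimensions — specifically, making sure that the ``$\dim$ of quotient $=$ $\dim$ of total space $-$ generic orbit dimension'' identities are applied to the correct irreducible component and that the generic orbit dimension along $C_0$ really is the maximal one $r'$, which is exactly where stability ($r_0=r'$) is used. Once the dimension count $\dim C_0\GIT G=\dim(V\oplus V^*\SPR G)$ is secured, the irreducibility hypothesis closes the argument by the elementary fact that a closed irreducible subvariety of full dimension in an irreducible variety is the whole variety.
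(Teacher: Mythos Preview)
Your dimension-counting approach has a genuine gap. The crux of your argument is the equality $\dim(V\oplus V^*\SPR G)=2\dim\gothc$, so that $C_0\GIT G$ is a closed irreducible subvariety of full dimension in the irreducible reduction. But the only result in the paper bounding $\dim\mu^{-1}(0)$ from above is the proposition on the structure of the null-fibre, and that proposition assumes \emph{visibility}, which is not part of the hypotheses here. Indeed, Example~\ref{example_non_red}(3) gives a stable polar non-visible representation where $\mu^{-1}(0)$ has components of dimension strictly larger than $\dim V+\dim\gothc$, and the quotient has a component of dimension $4>2=2\dim\gothc$. (That example fails the irreducibility hypothesis, so it does not contradict the lemma, but it shows that your dimension bound cannot be deduced from stability alone.) Your parenthetical ``which is in particular covered once we know visibility, but more directly'' acknowledges the issue without resolving it; there is no ``more direct'' argument given. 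A secondary problem is your formula $\dim(V\oplus V^*\SPR G)=\dim\mu^{-1}(0)-r_0$: even granting the dimension of $\mu^{-1}(0)$, this requires the generic fibre of the quotient map to have dimension $r_0$, which is not automatic on an arbitrary dominant component.

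The paper's proof avoids dimension comparisons entirely. Pick an irreducible component $C_1\subseteq\mu^{-1}(0)$ dominating the (irreducible) quotient. Since every fibre of the quotient map contains a unique closed orbit, every closed orbit in $\mu^{-1}(0)$ lies in $C_1$. Closed orbits are dense in $C_0$ by Proposition~\ref{prop:finiteinjective}, so $C_0\subseteq C_1$. As $(G,V)$ is stable polar, $C_0$ is itself an irreducible component of $\mu^{-1}(0)$, hence $C_0=C_1$, and therefore $C_0\GIT G=(V\oplus V^*\SPR G)_{\red}$. This argument uses only the preceding lemma (that $C_0$ is a component) and the basic structure of GIT quotients; no control on $\dim\mu^{-1}(0)$ is needed.
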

 
\begin{proof} Assume that $V\oplus V^*\SPR G$ is irreducible. Then 
there is an irreducible component $C_1\subseteq\mu^{-1}(0)$ dominating
the symplectic reduction. Since every fibre of the quotient map $\pi$
contains exactly one closed orbit, all closed orbits must be contained
in $C_1$. As closed orbits are dense in $C_0$, one has $C_0\subseteq C_1$.
Since by assumption $(G,V)$ is stable polar, $C_0$ is itself an irreducible 
component and hence equals $C_1$. 
\end{proof}

\begin{example}--- Without the stability assumption the conclusion of the 
lemma can be wrong as the following example shows: $(\LieSl_3,\IC^3\oplus \IC^3)$
is a non-visible and unstable polar representation with trivial Cartan subspace.
Consequently, $C_0=\{0\}$. However, $\mu^{-1}(0)\GIT \LieSl_3$ is an irreducible
non-reduced surface, and its reduction is an $A_1$-singularity. 
\end{example}

\begin{lemma}\label{lem:MovingPairs}
--- Let $(G,V)$ be a polar representation with Cartan subspace
$\gothc\subseteq V$ and dual Cartan subspace $\gothc\dual\subseteq V^*$. If 
$(x,y)\in \mu^{-1}(0)$ with semisimple elements $x\in V$ and $y\in V^*$, then
there is an element $g\in G$ such that $(gx,gy)\in\gothc\oplus\gothc\dual$.
\end{lemma}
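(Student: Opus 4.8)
The plan is to reduce the statement to a problem about a single semisimple orbit by first moving $x$ into $\gothc$, then analyzing $y$ via the slice representation at that point. First I would use the fact that $x\in V$ is semisimple and $V$ is polar: by \cite[Thm.~2.3]{DK85} (or rather the consequences of polarity recalled in the excerpt), every closed orbit in $V$ meets the Cartan subspace, so after replacing $(x,y)$ by $(gx,gy)$ for a suitable $g\in G$ we may assume $x=w\in\gothc$. The condition $(x,y)\in\mu^{-1}(0)$ means $\mu^A(w+y)=y(Aw)=0$ for all $A\in\gothg$, i.e.\ $y$ annihilates $\gothg w$.

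Next I would bring in the slice representation. Let $G_w$ be the (reductive, since $w$ is semisimple) stabilizer, and let $N_w\subseteq V$ be the orthogonal complement to $\gothg w$; recall from the excerpt that $N_w$ is a polar representation of $G_w$ with $\gothc$ as a Cartan subspace, and that the decomposition $V=\gothg w\oplus N_w$ is $G_w$-stable. Dually we get a $G_w$-stable decomposition $V^*=(\gothg w)^{\perp}\oplus N_w^*$ where $(\gothg w)^{\perp}=\{\varphi\mid \varphi(N_w)=0\}$... — actually the cleaner bookkeeping is: $V^*\cong (\gothg w)^*\oplus N_w^*$ as $G_w$-modules, and the condition $y(\gothg w)=0$ says precisely that $y$ lies in the summand $N_w^*$. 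So $y$ is an element of the contragredient $N_w^*$ of the slice representation, and since $N_w$ is polar, so is $N_w^*$, with dual Cartan subspace which one checks is exactly $\gothc\dual$ (using the compatibility of the Hermitian structures, as in the Proposition preceding Proposition~\ref{prop:finiteinjective}).

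Then I would argue that $y\in N_w^*$ is semisimple \emph{for the $G_w$-action}. This is the key technical point: $y$ is semisimple for $G$ by hypothesis, and I want to descend this to the subgroup $G_w$. Here I would invoke the Kempf--Ness / Hilbert--Mumford picture: pick the maximal compact $K$ and Hermitian product adapted to $\gothc$; semisimplicity of the pair together with $\langle y, \gothg_w y\rangle \subseteq \langle y, \gothg y\rangle$ lets one run the minimal-length criterion \cite[Thm.~1.1]{DK85} inside $N_w^*$. Concretely, since $x=w$ already has minimal length in its $G$-orbit and $(w,y)$ should be arranged (again using \cite[Thm.~1.1]{DK85}, as in Proposition~\ref{prop:finiteinjective}) to have minimal length in its $G$-orbit in $V\oplus V^*$, the pair is in particular of minimal length under $G_w\subseteq G$, which forces $y$ to be $G_w$-semisimple in $N_w^*$. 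Now apply the polarity of $N_w^*$: there is $h\in G_w$ with $hy\in\gothc\dual$, and since $h$ fixes $w$ we get $(w,hy)=(hx,hy)\in\gothc\oplus\gothc\dual$, as desired.

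The main obstacle I anticipate is exactly the descent of semisimplicity of $y$ from $G$ to the slice group $G_w$, i.e.\ making the reduction to the slice rigorous: one must be careful that "semisimple in $V\oplus V^*$ under $G$" combined with $x$ being semisimple is genuinely enough to conclude "semisimple in $N_w^*$ under $G_w$". The safe route is the Hermitian/Kempf--Ness argument sketched above rather than a purely orbit-closure argument, because orbit closures behave subtly under passage to stabilizer subgroups; alternatively one can use Luna's slice theorem to identify a $G_w$-saturated neighborhood of $Gw$ with $G\times^{G_w}N_w$ and transport the closed-orbit condition. Everything else — the initial move of $x$ into $\gothc$, the identification of the dual slice Cartan subspace with $\gothc\dual$, and the final application of polarity of $N_w^*$ — is routine given the results of Dadok and Kac already recalled in the text.
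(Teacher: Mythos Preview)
Your approach is exactly the paper's: conjugate $x$ into $\gothc$, observe that $\mu(x,y)=0$ places $y$ in the annihilator $N_x^*$ of $\gothg x$, and then use that the dual slice representation $(G_x,N_x^*)$ is polar with Cartan subspace $\gothc\dual$ to move $y$ into $\gothc\dual$ by an element of $G_x$. The paper's four-line proof does not pause over the descent of semisimplicity from $G$ to $G_x$ that you flag as the main obstacle---it simply writes ``as $y$ is semisimple''---so your caution goes beyond what the paper records, but the strategy is identical.
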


\begin{proof} As $x$ is semisimple there is an element $h\in G$ such that
$hx\in\gothc$. Replacing $(x,y)$ by $(hx,hy)$, we may assume that $x\in \gothc$.
According to the definition of the moment map, $\mu(x,y)=0$ means that $y$
is contained in the annihilator of the tangent space $\gothg x$, which is
exactly the dual $N_x^*$ of the slice representation $N_x$. As $y$ is semisimple
and since the slice representation has $\gothc$ as a Cartan subspace, there
is an element $g\in G_x$ such that $gy\in\gothc\dual$.
\end{proof}

\section{Recapitulation of \texorpdfstring{$\theta$}{theta}-representations}\label{sec:Theta}

A particular class of polar representations is formed by the so-called 
$\theta$-representations as introduced by Vinberg \cite{Vin76}. Let $G$ be a connected
reductive group with Lie algebra $\gothg$. Let $\theta$ denote both an 
automorphism of $G$ of finite order $m$ and the induced automorphism
of $\gothg$. After fixing a primitive $m$-th root of unity $\xi$, the action of
$\theta$ gives rise to a $\IZ/m\IZ$-grading $\gothg=\bigoplus_{i\in \IZ/m\IZ}
\gothg_i$ with components $\gothg_i=\Kern(\theta-\xi^i\id)$. Then $\gothg_0$
is a reductive Lie algebra, each $\gothg_i$ is a $\gothg_0$ representation, 
and $\gothg_i$ is dual to $\gothg_{-i}$ via the Killing form. 
Let $G_0\subseteq G$ be the connected
algebraic subgroup with Lie algebra $\gothg_0$. Then $G_0$ is reductive 
and acts linearly on $\gothg_1$. The representation $(G_0,\gothg_1)$ is 
called the $\theta$-representation obtained from the pair $(\gothg,\theta)$.
These representations are visible and polar but not always stable. A complete
list of {\sl irreducible} $\theta$-representations with their main features 
can be found in the paper \cite{Kac80} of Kac. 

A Cartan subspace $\gothc\subseteq\gothg_1$ is defined as a maximal abelian subspace 
consisting of semisimple elements. The subspace $\gothc$ lies in some $\theta$-invariant Cartan subalgebra $\gothh$ of $\gothg$,
and the dual Cartan subspace is $\gothc\dual=\gothh_{-1}\subseteq \gothg_{-1}$.
The dual Cartan subspace can also be described as the space of all semisimple
elements in $\gothg_{-1}$ that commute with $\gothc$. Note that the moment
map $\mu:\gothg_1\oplus\gothg_{-1}\to \gothg_0$ is, up to the identification
via the Killing form, the ordinary commutator. Hence $\mu(x,y)=0$ if and 
only if $x$ and $y$ commute.

If $\gothg$ is a reductive algebra and $x\in\gothg$ is any element, the centraliser
of $x$ in $\gothg$ will be denoted by $\gothg^x=\{y\in\gothg\;|\; [x,y]=0\}$.

In \cite[\S 3.8, Thm.]{Pan94}, Panyushev outlines the proof of the next proposition.
However, he assumes the stability of the $\theta$-representation without actually
using it for the part of the proof relevant to our proposition; also, he considers only
the component $C_0$ instead of $\mu^{-1}(0)$. Therefore, we have opted to 
reformulate his theorem in our setting and to write out the details.

\begin{proposition} \label{prop:theta}
--- Let $(G_0,\gothg_1)$ be a $\theta$-representation with
Cartan subspace $\gothc$ and let $C_0=\overline{G_0(\gothc\oplus \gothc\dual)}$.
Then $C_0\GIT G_0=\mu^{-1}(0)_{\red}\GIT G_0$.
\end{proposition}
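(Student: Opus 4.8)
The plan is to reduce the statement to showing that every point of $\mu^{-1}(0)$, viewed up to $G_0$-conjugacy and closure, lies in $C_0$; equivalently, that $C_0\GIT G_0\to \mu^{-1}(0)_{\red}\GIT G_0$ is surjective (it is automatically a closed immersion of reduced affine varieties once we know $C_0$ is one of the irreducible components dominating the quotient, and injectivity follows from the fact that each fibre of the quotient map contains a unique closed orbit together with Proposition~\ref{prop:finiteinjective}). Since $\mu(x,y)=0$ if and only if $[x,y]=0$ in the $\theta$-setting, this amounts to the statement: for any commuting pair $(x,y)\in\gothg_1\times\gothg_{-1}$, the closure $\overline{G_0(x,y)}$ meets $C_0$, i.e.\ contains a pair that is $G_0$-conjugate into $\gothc\oplus\gothc\dual$.

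The key steps I would carry out: First, reduce to semisimple pairs using the Jordan decomposition in a $\IZ/m\IZ$-graded setting. Write $x=x_s+x_n$ with $x_s,x_n$ commuting semisimple and nilpotent parts, both lying in $\gothg_1$ (this is the graded Jordan decomposition, valid for $\theta$-representations); similarly $y=y_s+y_n$. Because $[x,y]=0$, all four components commute pairwise. The idea is to degenerate: using a one-parameter subgroup $\lambda$ of $G_0$ adapted to $x_s$ (an $\goth{sl}_2$-type or a cocharacter contracting toward the centraliser of $x_s$), one flows $(x,y)$ to a pair whose closure contains $(x_s,y')$ for some $y'$ commuting with $x_s$; iterating and using that nilpotent parts can be scaled away via the grading-compatible $\Gm$-action $t\cdot(x,y)=(tx,t^{-1}y)$ which preserves $\mu^{-1}(0)$, one lands in the closure at a genuinely semisimple commuting pair. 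Second, for a semisimple commuting pair $(x,y)$ apply Lemma~\ref{lem:MovingPairs}, which gives $g\in G_0$ with $(gx,gy)\in\gothc\oplus\gothc\dual$; hence $(x,y)\in\overline{G_0\cdot(\gothc\oplus\gothc\dual)} = C_0$ after moving back, so the original $(x,y)$ has $\overline{G_0(x,y)}\cap C_0\neq\emptyset$. Third, conclude that every closed $G_0$-orbit in $\mu^{-1}(0)$ lies in $C_0$ (closed orbits are semisimple pairs), so $C_0$ dominates $\mu^{-1}(0)_{\red}\GIT G_0$; combined with Proposition~\ref{prop:ontoC0} (which says $C_0\GIT G_0$ is a Poisson subscheme and $r$ is bijective onto it) and the uniqueness of closed orbits in fibres, this forces $C_0\GIT G_0 = \mu^{-1}(0)_{\red}\GIT G_0$.

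The main obstacle I expect is the degeneration step: making precise that from an arbitrary commuting pair $(x,y)$ one can reach, within the orbit closure, a \emph{semisimple} commuting pair, and doing this using only structure available for $\theta$-representations (the graded Jordan decomposition, the existence of a nilpotent-contracting cocharacter in $G_0$ by graded Jacobson--Morozov, and the stability of $\mu^{-1}(0)$ under the commuting $\Gm$). One must be careful that the cocharacter used to kill $x_n$ also behaves controllably on $y$, and that after reaching $(x_s, \tilde y)$ one can separately degenerate the part of $\tilde y$ that is nilpotent relative to $\gothg_0^{x_s}$ without disturbing $x_s$ — this is exactly where Panyushev's argument (which we are reformulating, without assuming stability) does the real work, and where writing out the details, as the authors announce, is necessary. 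A secondary, more routine point is checking that $C_0\GIT G_0$, already known to be a bijective image of the reduced variety $\gothc\oplus\gothc\dual/W$, is itself reduced so that the set-theoretic equality of images upgrades to a scheme-theoretic equality of the reductions; this follows since $C_0$ is a variety (irreducible and reduced by definition as a closure of an orbit) and GIT quotients of varieties are varieties.
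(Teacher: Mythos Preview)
Your overall strategy is the paper's: show that for every commuting pair $(x,y)\in\gothg_1\times\gothg_{-1}$ the closure $\overline{G_0(x,y)}$ contains the semisimple pair $(x_s,y_s)$, then invoke Lemma~\ref{lem:MovingPairs}. But your sketch of the degeneration step contains two errors that would derail an actual write-up.

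First, the external $\Gm$-action $t\cdot(x,y)=(tx,t^{-1}y)$ is useless here: it is not contained in $G_0$, so it does not move $(x,y)$ inside its $G_0$-orbit closure, and in any case it scales the semisimple parts $x_s,y_s$ exactly as it scales the nilpotent parts. Discard it. Second, the cocharacter you want is attached to $x_n$, not to $x_s$, and it must be taken inside the \emph{simultaneous} centraliser of $x_s$ and $y_s$, not just that of $x_s$. The paper's clean mechanism is: set $\gothl=\gothg^{x_s}\cap\gothg^{y_s}$, a reductive $\theta$-stable subalgebra containing $x_n$ and $y_n$, and let $L_0\subseteq G_0$ be the connected subgroup with Lie algebra $\gothl_0$. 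Since $L_0$ fixes both $x_s$ and $y_s$, one has $\overline{L_0(x,y)}=(x_s,y_s)+\overline{L_0(x_n,y_n)}$, so it suffices to prove $(0,0)\in\overline{L_0(x_n,y_n)}$. Apply the graded Jacobson--Morozov theorem \emph{inside $\gothl$} to $x_n$ to get an $\Liesl_2$-triple $(x_n,h,y')$ with $h\in\gothl_0$, and let $\phi:\Gm\to L_0$ be the associated cocharacter. Then $\phi(t)x_n\to 0$ since $x_n$ has $h$-weight $2$, while $\phi(t)y_n\to y_0$, the weight-$0$ component of $y_n$, since $y_n$ centralises $x_n$ and hence lies in nonnegative $h$-weights. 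Finally $y_0$ is again nilpotent, so $(0,0)\in\overline{L_0(0,y_0)}$. This two-stage limit inside $L_0$ replaces your proposed iteration and removes the need to ``separately degenerate'' in $\gothg_0^{x_s}$; working only in the centraliser of $x_s$ would not guarantee that $y_s$ stays fixed along the flow.
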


\begin{proof} For any $x\in \gothg$, denote by $x_s$ and $x_n$ its semisimple
and nilpotent parts respectively. One can check that $x_s,x_n\in\gothg_i$
if $x\in \gothg_i$. 

Assume now that $\mu(x,y)=0$, i.e.\ that $[x,y]=0$. Then all components $x_s$,
$x_n$, $y_s$ and $y_n$ mutually commute. According to Lemma \ref{lem:MovingPairs}, 
$(x_s,y_s)\in G_0(\gothc\times\gothc\dual)$. Therefore, in order to prove the
proposition, it suffices to show that $(x_s,y_s)\in \overline{G_0(x,y)}$. 
The Lie subalgebra $\gothl=\gothg^{x_s}\cap \gothg^{y_s}$ is reductive by
Matsuhima's criterion, and since $x_s$ and $y_s$ are homogeneous, $\gothl$
is a graded subalgebra, $\gothl=\bigoplus_{i\in \IZ/m\IZ} \gothl_i$. Let 
$L_0\subseteq G_0$ be the connected subgroup of $G_0$ with Lie algebra $\gothl_0$.
We claim that $(0,0)\in \overline{L_0(x_n,y_n)}$. The assertion will then
follow since $\overline{L_0(x,y)}=(x_s,y_s)+\overline{L_0(x_n,y_n)}$. 

By the graded version of the Jacobson-Morozov theorem \cite[\S 2]{Kac80}, there
exist elements $h\in \gothl_0$ and $y'\in\gothl_{-1}$ such that $(x_n,h,y')$
is an $\Liesl_2$-triplet. Consider the corresponding weight space decomposition
$$\gothl=\bigoplus_{j\in \IZ} \gothl_h(j),\quad \gothl_h(j)=\{ z\in\gothl\;|\;
[h,z]=jz\}$$
and the associated one-parameter subgroup $\phi:\IG_m\to L_0$ with
$\phi(t)|_{\gothl_h(j)}=t^j\id_{\gothl_h(j)}$. Since $x_n\in\gothl_h(2)$, we
have $\lim_{t\to 0}\phi(t).x_n=0$. On the other hand, the centraliser of
$x_n$ in $\gothl$, including $y_n$, must be contained in $\bigoplus_{j\geq 0}
\gothl_h(j)$, and so $\lim_{t\to 0}\phi(t).y_n=:y_0$ equals the weight-$0$
component of $y_n$. This shows that $(0,y_0)\in \overline{L_0(x_n,y_n)}$.
Finally, since $y_n$ is nilpotent, the same is true for $y_0$, so that
$(0,0)\in \overline{L_0(0,y_0)}$. 
\end{proof}

\begin{corollary}\label{corollary dim one}--- 
Conjecture \ref{conj} holds whenever $(G_0,\gothg_1)$ has a Cartan subspace of dimension at most $1$.
\end{corollary}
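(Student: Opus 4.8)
The plan is to deduce Corollary \ref{corollary dim one} by combining Proposition \ref{prop:theta} with Proposition \ref{prop:rank1case}. Recall that by the discussion following Proposition \ref{prop:ontoC0}, Conjecture \ref{conj} for a visible polar representation is equivalent to the conjunction of two statements: (1) that $r:\gothc\oplus\gothc\dual/W\to V\oplus V^*\SPR G$ is bijective, equivalently that $C_0\GIT G\to (V\oplus V^*\SPR G)_{\red}$ is an isomorphism; and (2) that $C_0\GIT G$ is normal. Since a $\theta$-representation $(G_0,\gothg_1)$ is always visible and polar, it suffices to verify these two points when $\dim\gothc\leq 1$.

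For point (1): Proposition \ref{prop:theta} gives $C_0\GIT G_0 = \mu^{-1}(0)_{\red}\GIT G_0 = (V\oplus V^*\SPR G_0)_{\red}$ for \emph{any} $\theta$-representation, with no restriction on the dimension of the Cartan subspace. (Here one uses that $\mu^{-1}(0)\GIT G_0 = \mu^{-1}(0)_{\red}\GIT G_0$ as schemes is compatible with taking the reduction, since GIT quotients of affine schemes by reductive groups commute with reduction up to nilpotents, and in any case the statement is about the reduced scheme $(V\oplus V^*\SPR G)_{\red}$.) So point (1) holds automatically. What remains is point (2), the normality of $C_0\GIT G_0$.

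For point (2) we invoke the hypothesis $\dim\gothc\leq 1$. If $\dim\gothc = 0$ then $\gothc\oplus\gothc\dual = 0$, so $C_0 = \overline{G_0\cdot 0} = \{0\}$ and $C_0\GIT G_0$ is a reduced point, hence normal; here $r$ is the inclusion of a point and is trivially an isomorphism onto $(V\oplus V^*\SPR G)_{\red}$. If $\dim\gothc = 1$, then by Proposition \ref{prop:rank1case} the morphism $r:\gothc\oplus\gothc\dual/W\to V\oplus V^*\SPR G_0$ is a closed immersion; composing with the fact that its image is $C_0\GIT G_0$ (Proposition \ref{prop:ontoC0}), we conclude that $r$ induces an isomorphism of schemes $\gothc\oplus\gothc\dual/W \xrightarrow{\isom} C_0\GIT G_0$. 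But $\gothc\oplus\gothc\dual/W$ is a quotient of the smooth affine variety $\gothc\oplus\gothc\dual\isom\IC^2$ by a finite group, hence is normal. Therefore $C_0\GIT G_0$ is normal, establishing point (2). Combining (1) and (2) yields Conjecture \ref{conj} in this case.

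I expect essentially no obstacle here: the corollary is a formal consequence of the two propositions, and the only thing to be careful about is bookkeeping with the reduced structures — specifically, checking that ``$r$ is a closed immersion'' (Proposition \ref{prop:rank1case}) together with ``$r$ is bijective onto $C_0\GIT G_0$'' (Propositions \ref{prop:finiteinjective} and \ref{prop:ontoC0}, plus Proposition \ref{prop:theta} to identify $C_0\GIT G_0$ with $(V\oplus V^*\SPR G_0)_{\red}$) does give a scheme isomorphism $\gothc\oplus\gothc\dual/W\isom(V\oplus V^*\SPR G_0)_{\red}$, and then that this source is visibly normal. The mild subtlety is that Proposition \ref{prop:rank1case} proves $r$ is a closed immersion into the possibly non-reduced scheme $V\oplus V^*\SPR G_0$, so one passes to reductions at the end; this is harmless since $\gothc\oplus\gothc\dual/W$ is already reduced.
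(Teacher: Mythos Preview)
Your proposal is correct and follows essentially the same approach as the paper's own proof, which simply invokes Proposition~\ref{prop:theta} and Proposition~\ref{prop:rank1case} in one sentence each; you have merely unpacked the argument through the two-point criterion stated after Proposition~\ref{prop:ontoC0}. The one place where your write-up is slightly garbled is the parenthetical about $\mu^{-1}(0)\GIT G_0$ versus $\mu^{-1}(0)_{\red}\GIT G_0$: what you need (and what holds, since taking $G_0$-invariants is exact and preserves reducedness) is $(\mu^{-1}(0)\GIT G_0)_{\red}=\mu^{-1}(0)_{\red}\GIT G_0$, not equality before reduction---but as you note, this is harmless for the conclusion.
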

\begin{proof}
If $\dim \gothc=0$, then the symplectic reduction associated to $(G_0,\gothg_1)$ is a point. If $\dim \gothc=1$, then the result follows from Proposition \ref{prop:rank1case} and Proposition \ref{prop:theta}.
\end{proof}

\begin{remark}--- For visible polar representations, such all
$\theta$-representations, having a zero-dimen\-sion\-al Cartan subspace is equivalent to having a finite number of orbits. A list of all irreducible visible
representations with a finite number of orbits can be found in the paper 
\cite[\S 3]{Kac80} of Kac. 
\end{remark}

\begin{remark}\label{rk_normal_muz}--- 
In contrast to the case of Lie algebras and symmetric Lie algebras cases, there exist many stable and locally free $\theta$-representations with a one-dimensional Cartan subspace. It turns out that $\mu^{-1}(0)$ is non-normal only in five of these cases. It is remarkable that, except in these five sporadic cases, the null-fiber of the moment map of a stable and locally free $\theta$-representation obtained from a simple Lie algebra is always normal. Let us describe these $\theta$-representations in terms of their Kac diagrams. Here the number $m$ is the order of the automorphism $\theta$.
\begin{enumerate}
\item $m=9$, \quad
\unitlength0.5mm\begin{picture}(60, 15)
\put(0,7){
\put(1,2){\circle*{4}}\put(3,2){\line(1,0){4}}
\put(9,2){\circle*{4}}\put(11,2){\line(1,0){4}}
\put(17,2){\circle{4}}\put(19,2){\line(1,0){4}}
\put(25,2){\circle*{4}}\put(27,2){\line(1,0){4}}
\put(33,2){\circle*{4}}\put(17,0){\line(0,-1){4}}
\put(17,-6){\circle*{4}}\put(17,-8){\line(0,-1){4}}
\put(17,-14){\circle*{4}}}
\end{picture}
\item $m=14$,\quad \begin{picture}(60, 15)
\put(1,2){\circle*{4}}\put(3,2){\line(1,0){4}}
\put(9,2){\circle*{4}}\put(11,2){\line(1,0){4}}
\put(17,2){\circle*{4}}\put(19,2){\line(1,0){4}}
\put(25,2){\circle{4}}\put(27,2){\line(1,0){4}}
\put(33,2){\circle*{4}}\put(35,2){\line(1,0){4}}
\put(41,2){\circle*{4}}\put(43,2){\line(1,0){4}}
\put(49,2){\circle*{4}}\put(25,0){\line(0,-1){4}}
\put(25,-6){\circle*{4}}
\end{picture}
\item $m=24$, \quad\begin{picture}(60, 15)
\put(1,2){\circle*{4}}\put(3,2){\line(1,0){4}}
\put(9,2){\circle*{4}}\put(11,2){\line(1,0){4}}
\put(17,2){\circle{4}}\put(19,2){\line(1,0){4}}
\put(25,2){\circle*{4}}\put(27,2){\line(1,0){4}}
\put(33,2){\circle*{4}}\put(35,2){\line(1,0){4}}
\put(41,2){\circle*{4}}\put(43,2){\line(1,0){4}}
\put(49,2){\circle*{4}}\put(51,2){\line(1,0){4}}
\put(57,2){\circle*{4}}\put(17,0){\line(0,-1){4}}
\put(17,-6){\circle*{4}}
\end{picture}
\item $m=20$,\quad \begin{picture}(60, 15)
\put(1,2){\circle*{4}}\put(3,2){\line(1,0){4}}
\put(9,2){\circle*{4}}\put(11,2){\line(1,0){4}}
\put(17,2){\circle{4}}\put(19,2){\line(1,0){4}}
\put(25,2){\circle*{4}}\put(27,2){\line(1,0){4}}
\put(33,2){\circle{4}}\put(35,2){\line(1,0){4}}
\put(41,2){\circle*{4}}\put(43,2){\line(1,0){4}}
\put(49,2){\circle*{4}}\put(51,2){\line(1,0){4}}
\put(57,2){\circle*{4}}\put(17,0){\line(0,-1){4}}
\put(17,-6){\circle*{4}}
\end{picture}
\item $m=15$,\quad \begin{picture}(60, 15)
\put(1,2){\circle*{4}}\put(3,2){\line(1,0){4}}
\put(9,2){\circle{4}}\put(11,2){\line(1,0){4}}
\put(17,2){\circle*{4}}\put(19,2){\line(1,0){4}}
\put(25,2){\circle{4}}\put(27,2){\line(1,0){4}}
\put(33,2){\circle*{4}}\put(35,2){\line(1,0){4}}
\put(41,2){\circle{4}}\put(43,2){\line(1,0){4}}
\put(49,2){\circle*{4}}\put(51,2){\line(1,0){4}}
\put(57,2){\circle*{4}}\put(17,0){\line(0,-1){4}}
\put(17,-6){\circle{4}}
\end{picture}\\
\end{enumerate}
Details on the interpretation of Kac diagrams can be found in the papers
of Kac \cite{Kac94} and Vinberg \cite{Vin76}. Details of the claims made
here will be given in a forthcoming paper by the first author. 
\end{remark}

\begin{example}--- All $\theta$-representations obtained from automorphisms
$\theta$ of order $m=2$ are known to be stable. If they are in addition
locally free, then they are of {\sl maximal rank}, i.e.\ 
$\varphi(m)\dim\gothc=\dim\gothh$, where $\varphi$ denotes Euler's
$\varphi$-function and $\gothh\subseteq \gothg$ is a Cartan subalgebra of $\gothg$.
Thus, for $m=2$, maximality of rank is equivalent to the existence of a Cartan
subalgebra $\gothh$ that is completely contained in $\gothg_1$; see \ \cite[\S 3.1]{Vin76}.

An example of a locally free stable $\theta$-representation which is not of
maximal rank can be obtained as follows: Let $\gothg=\Liesl_m$ and let 
$\theta$ be conjugation by $a=\diag(1,\xi,\ldots\xi^{m-1})$ for some primitive
$m$-th root of unity. Then $G_0\subseteq\LieSl_m$ is the diagonal torus and
$\gothg_1$ is the span of all elementary matrices $E_{i,i+1}$, for $i=1,\ldots,m-1$,
and $E_{m,1}$. If $m$ is not prime then $(G_0,\gothg_1)$ is not of maximal rank.
\end{example}

\begin{example}\label{ex binary tetrahedral}--- The motivating example for this paper is the representation
$(\LieSl_3,S^3\IC^3)$. This is a stable locally free $\theta$-representation 
for an automorphism of order $m=3$ on $\Lieso_8$. If $S^3\IC^3$ is viewed as the
space of plane cubic curves, a Cartan subspace is provided by the Hesse
pencil $\langle x^3+y^3+z^3, xyz\rangle$. The corresponding Weyl group is 
the binary tetrahedral group. The quotient $\gothc\oplus\gothc\dual/W$ is among
the small list of finite symplectic quotients that do admit a symplectic
resolution. Explicit resolutions were given in \cite{LS12}.
\end{example}

\section{Slices}

In the proof of our main theorem we will need an induction argument
that is based on the passage from a polar representation $(G,V)$ to 
the slice representation $(G_x,N)$ of a semisimple element $x\in\gothc$
in the Cartan subspace. The reduction argument splits into two parts:
A Luna-type slice theorem for symplectic reductions, and a formal 
Darboux theorem. We treat the slice theorem first. For the special
case of a linear action of the group of units in a semisimple algebra, the theorem is due to Crawley-Boevey \cite[\S~4]{CB03}; the general case is due to 
Jung \cite{Jun09}. As Jung's paper is unpublished we give a simplified proof as follows.

Let $(X,\omega)$ be a smooth affine symplectic variety with a Hamiltonian
action by a reductive group $G$, i.e.\ an action that preserves the
symplectic structure and admits a moment map $\mu:X\to \gothg^*$. Let
$x\in \mu^{-1}(0)$ be a point with closed orbit and therefore reductive
stabiliser subgroup $H=G_x$. The defining property of the moment map,
 $d\mu_x(\xi)(A)=\omega(\xi,Ax)$ for all $A\in \gothg$ and $\xi
\in T_xX$, directly implies that $\Imag(d\mu_x)=\gothh^\perp=(\gothg/\gothh)^*$
and $\Kern(d\mu_x)=(\gothg x)^\perp\subseteq T_xX$.

Choose a $G$-equivariant closed embedding $X\subseteq V$ into a linear 
representation of $G$ and $H$-equivariant splittings $V=\gothg x\oplus V'$ and 
$\gothg^*=\gothh^\perp\oplus \gothh^*$. The second splitting yields a 
decomposition $\mu=\mu^{\perp}\oplus \bar \mu$ into two components. The fact 
that $\Imag(d\mu_x)=\gothh^\perp$ implies that the component $\mu^\perp:X\to 
\gothh^\perp$ is smooth at $x$ and hence $Y:=(\mu^\perp)^{-1}(0)=
\mu^{-1}(\gothh^*)$ is smooth at $x$ of dimension $\dim_xY=\dim X-\dim\gothg x$.
In fact, $T_xY=\Kern(d\mu_x)$. Now $S_X:=(x+V')\cap X$ and $S_Y:=(x+V')\cap Y$
are transverse slices at $x$ to the orbit of $x$ in $X$ and $Y$, respectively, 
and the natural projection $T_xS_Y\to (\gothg x)^\perp/\gothg x$ is an
isomorphism. In particular, there is an $H$-stable open affine neighbourhood
$U$ of $x$ in $S_Y$ such that $U$ is smooth and the restriction of $\omega$ to 
$U$ is symplectic. Moreover, the composite map $\mu':S_Y\to X\xra{\;\mu\;}
\gothg^*\to\gothh^*$ is a moment map for the action of $H$ on $S_Y$. By 
construction, $\mu^{-1}(0)\cap S_X=\mu'{}^{-1}(0)$. Hence the inclusion induces 
a natural morphism
$$U\SPR H\subseteq (\mu^{-1}(0)\cap S_X)\GIT H \to \mu^{-1}(0)\GIT G=X\SPR G,$$
and Luna's slice theorem  implies the following result.

\begin{theorem} {\em ('Symplectic Slice Theorem')} --- The morphism
$U\SPR H\rightarrow X\SPR G$
is \'etale at $[x]$.
\end{theorem}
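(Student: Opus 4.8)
The plan is to deduce the étaleness from Luna's fundamental slice theorem applied twice: once to the $G$-action on $X$ at $x$, and once to the $H$-action on $S_Y$ at $x$, and then to check that the identifications are compatible with taking moment-map null-fibres. First I would recall Luna's theorem: since $x\in X$ has closed $G$-orbit and reductive stabiliser $H=G_x$, there is an $H$-stable locally closed smooth subvariety $S\subseteq X$ through $x$ (a \'etale slice) together with a $G$-equivariant \'etale morphism $G\times^H S\to X$ whose image is a saturated open neighbourhood of the orbit, inducing an \'etale morphism $S\GIT H\to X\GIT G$ of the quotients. The slice $S$ may be taken to be an $H$-stable neighbourhood of $x$ inside the transverse slice $S_X=(x+V')\cap X$; shrinking if necessary, we may assume $S\subseteq S_X$.

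The next step is to transport the moment-map condition through this slice. The key observation, already built into the construction preceding the statement, is that $\mu^{-1}(0)\cap S_X=\mu'^{-1}(0)$, where $\mu':S_Y\to\gothh^*$ is the moment map for the residual $H$-action on $S_Y$, and that $S_Y$ is symplectic near $x$. Intersecting the Luna slice datum with the null-fibre, one gets that $G\times^H(\mu^{-1}(0)\cap S)\to\mu^{-1}(0)$ is \'etale onto a saturated open subset: indeed \'etaleness is preserved under base change along $\mu^{-1}(0)\hookrightarrow X$, and $G$-saturation is preserved because $\mu$ is $G$-equivariant so $\mu^{-1}(0)$ is $G$-stable. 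Passing to GIT quotients and using that taking invariants commutes with the flat base change implicit in Luna's theorem (more precisely, that $(G\times^H Z)\GIT G\cong Z\GIT H$ for any affine $H$-scheme $Z$), one obtains that
\[
(\mu^{-1}(0)\cap S)\GIT H\;\longrightarrow\;\mu^{-1}(0)\GIT G=X\SPR G
\]
is \'etale at the image of $x$. Finally, since $\mu^{-1}(0)\cap S=\mu'^{-1}(0)\cap S$ is an $H$-stable open subset of $\mu'^{-1}(0)$ containing $x$, and since $U$ was chosen to be an $H$-stable open affine neighbourhood of $x$ in $S_Y$ on which $\omega$ is symplectic, after further shrinking $U$ we may assume $U\subseteq S$, so that $U\SPR H=\mu'^{-1}(0)\cap U$ is an open subscheme of $(\mu^{-1}(0)\cap S)\GIT H$ containing $[x]$; composing with the \'etale map above gives the desired \'etale morphism $U\SPR H\to X\SPR G$ at $[x]$.

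The main technical point — and the only place where one must be slightly careful — is the interchange of GIT quotients with the null-fibre and with Luna's associated-bundle construction. One should verify that the scheme-theoretic equality $\mu^{-1}(0)\cap S_X=\mu'^{-1}(0)$ persists after restricting to $U$ and after the $G$-sweep, i.e. that $\mu^{-1}(0)\cap (G\cdot U)$ is exactly the image of $G\times^H(\mu'^{-1}(0)\cap U)$; this follows from the transversality of $S_Y$ to the orbit together with the fact that $\Kern(d\mu_x)=(\gothg x)^\perp$ and $\Imag(d\mu_x)=\gothh^\perp$, which guarantee that near the orbit the fibre $\mu^{-1}(0)$ meets every slice translate transversally in the expected way. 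Everything else is a formal consequence of Luna's theorem and the standard identity $(G\times^H Z)\GIT G\cong Z\GIT H$; I expect no real obstacle beyond bookkeeping, which is why the statement is phrased as an immediate corollary of Luna's slice theorem.
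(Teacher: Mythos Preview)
Your argument is correct and is exactly the approach the paper intends: the paper's entire proof is the single clause ``and Luna's slice theorem implies the following result,'' and you have simply unpacked that clause by base-changing the Luna \'etale datum $G\times^H S\to X$ along $\mu^{-1}(0)\hookrightarrow X$, using the identity $(G\times^H Z)\GIT G\cong Z\GIT H$, and invoking the scheme-theoretic equality $\mu^{-1}(0)\cap S_X=\mu'^{-1}(0)$ already established in the setup. One small slip: in your last paragraph you write ``$U\SPR H=\mu'^{-1}(0)\cap U$,'' but you mean $U\SPR H=(\mu'^{-1}(0)\cap U)\GIT H$; also, to get an honest open subscheme of $(\mu^{-1}(0)\cap S)\GIT H$ you should shrink $U$ further to be $H$-saturated, which is harmless for \'etaleness at $[x]$.
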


It remains to compare the symplectic reductions of $U$ and its tangent 
space $T_xU=\gothg x^\perp/\gothg x$ at $x$, endowed with the linearised 
action of $H$. 

Let $\gothm\subseteq \widehat\ko_{S_Y,x}$ denote the maximal ideal of the 
completion of the local ring of $S_Y$ at $x$. Any choice of regular 
parameters $\xi_1,\ldots,\xi_n\in\gothm$ induces an isomorphism 
$u:\widehat\ko_{\gothg x^\perp/\gothg x, 0}\to R$ that is characterised 
by $z_i=(\xi_i\,\mod\,\gothm^2)\mapsto \xi_i$, $i=1,\ldots,n$. Let 
$\omega$ denote the symplectic form on $S_Y$ and $\omega^{(0)}$ its 
value at $x$, i.e.\ the corresponding constant symplectic form on $T_x S_Y$.

\begin{theorem} {\em ('Formal Darboux Theorem')} --- There exist regular
parameters $\xi_1,\ldots,\xi_n\in \widehat\ko_{S_Y,x}$, $n=\dim_{x}S_Y$, such
that the induced isomorphism $u:\widehat\ko_{\gothg x^\perp/\gothg x, 0}
\to \widehat\ko_{S_Y,x}$ satisfies $u^*\omega=\omega^{(0)}$.
\end{theorem}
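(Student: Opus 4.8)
The plan is to prove a formal Darboux theorem by the classical Moser homotopy method, adapted to the formal/analytic-germ setting with an equivariance constraint (although equivariance is not asserted in the statement, one wants the argument to produce $H$-equivariant parameters, or at least to be compatible with the subsequent use; here I will focus on the bare statement). Write $R=\widehat\ko_{S_Y,x}$ with maximal ideal $\gothm$, and let $\Omega^\bullet_R$ denote the module of formal (continuous) differentials. Let $\omega\in\Omega^2_R$ be the symplectic form on the formal germ $S_Y$ and let $\omega^{(0)}$ be the constant form given by its value at $x$, transported to $R$ via any initial choice of regular parameters $\eta_1,\dots,\eta_n$, so that $\omega-\omega^{(0)}\in\gothm\,\Omega^2_R$ and both forms are closed and nondegenerate at the closed point. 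Set $\omega_t:=\omega^{(0)}+t(\omega-\omega^{(0)})$ for $t$ in a formal disc; since nondegeneracy is an open condition at the closed point and $\omega_t\equiv\omega^{(0)}\pmod{\gothm}$ for all $t$, each $\omega_t$ is nondegenerate over $R[[t]]$.

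Next I would produce the interpolating vector field. Because $\omega-\omega^{(0)}$ is closed and $H^2$ of a formal (hence contractible) germ vanishes, the formal Poincaré lemma gives $\sigma\in\Omega^1_R$ with $d\sigma=\omega-\omega^{(0)}$; moreover one can choose $\sigma\in\gothm^2\Omega^1_R$ because $\omega-\omega^{(0)}$ has no constant term (integrate the explicit homotopy operator). Using nondegeneracy of $\omega_t$ over $R[[t]]$, define the $t$-dependent formal vector field $X_t$ by the contraction identity $\iota_{X_t}\omega_t=-\sigma$; then $X_t\in\gothm\,\Der_R$, in fact $X_t$ vanishes to order $\geq 1$ at the closed point, so it is formally integrable: the flow $\phi_t$ is a well-defined automorphism of the formal germ $(S_Y,x)$ depending formally on $t$, fixing $x$, with $\phi_0=\id$. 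The standard Moser computation
\begin{equation}
\frac{d}{dt}\bigl(\phi_t^*\omega_t\bigr)=\phi_t^*\Bigl(\mathcal L_{X_t}\omega_t+\frac{d\omega_t}{dt}\Bigr)=\phi_t^*\bigl(d\iota_{X_t}\omega_t+(\omega-\omega^{(0)})\bigr)=\phi_t^*\bigl(-d\sigma+d\sigma\bigr)=0
\end{equation}
then shows $\phi_1^*\omega=\phi_1^*\omega_1=\omega_0=\omega^{(0)}$. Finally set $\xi_i:=\phi_1^*\eta_i$; since $\phi_1$ fixes $x$ and is the identity modulo $\gothm^2$ (as $X_t\in\gothm^2\Der$ when $\sigma\in\gothm^2$, or in any case $d\phi_1|_x=\id$ because $X_t$ vanishes at $x$), the $\xi_i$ are again regular parameters with the same linear terms $z_i$, and the induced isomorphism $u:\widehat\ko_{\gothg x^\perp/\gothg x,0}\to R$ pulls $\omega$ back to $\omega^{(0)}$, which is exactly the constant form on $T_xS_Y$.

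The main obstacle is purely technical bookkeeping in the formal category: one must check that all the differential-geometric operations (Cartan's magic formula, the Poincaré homotopy operator, integration of a vector field to a flow) make sense over the complete local ring $R$ and over $R[[t]]$, and in particular that a derivation of $R$ lying in $\gothm\,\Der_R$ has a well-defined time-$1$ formal flow — this is where completeness is essential and where one uses that each iterate pushes things into higher powers of $\gothm$, so the exponential series converges $\gothm$-adically. A secondary point, relevant for the application in the next section, is equivariance: if one wants $u$ to be $H$-equivariant one should run the whole construction $H$-equivariantly, choosing $\eta_i$, hence $\omega^{(0)}$, $\omega_t$, $\sigma$ (averaging the homotopy operator over the reductive $H$), and $X_t$ all $H$-invariantly, so that $\phi_t$ commutes with $H$; this is routine given the linear reductivity of $H$ but should be remarked. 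Everything else — closedness of $\omega$, $d\omega_t/dt=\omega-\omega^{(0)}$, nondegeneracy being detected at the closed point — is immediate.
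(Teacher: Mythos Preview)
Your argument is correct: this is the standard Moser homotopy proof of Darboux's theorem, and all the steps you outline do go through in the formal setting over $R=\widehat\ko_{S_Y,x}$. The key points --- the formal Poincar\'e lemma giving $\sigma\in\gothm^2\Omega^1_R$, the resulting vector field satisfying $X_t(\gothm^k)\subseteq\gothm^{k+1}$ so that its time-$1$ flow converges $\gothm$-adically, and the Cartan calculation --- are all fine.

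The paper, however, takes a different and more elementary route: it avoids flows altogether and constructs the parameters $\xi_i=z_i+\sum_{m\geq 2}\xi_i^{(m)}$ by a direct power-series recursion. After writing $\omega=\sum\Omega_{\alpha\beta}\,dz_\alpha\wedge dz_\beta+\sum_{m\geq 2}d\psi^{(m)}$ via the Poincar\'e lemma, the condition $\omega=\sum\Omega_{\alpha\beta}\,d\xi_\alpha\wedge d\xi_\beta$ is expanded degree by degree and solved for $\xi^{(m)}$ explicitly in terms of $\psi^{(m)}$ and the lower $\xi^{(i)}$, using only the invertibility of the constant matrix $\Omega$. Your approach is more conceptual and, as you remark, makes the $H$-equivariance transparent by averaging; the paper's approach is shorter and sidesteps any discussion of integrating formal vector fields, at the cost of being less obviously equivariant (though the recursion visibly commutes with the linear $H$-action).
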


\begin{proof} Let $z_1,\ldots,z_n\in\gothm$ be any set of regular parameters.
Since $\omega$ is closed, the Poincar\'{e} lemma allows one to find $1$-forms
$\psi^{(m)}$, $m\geq 2$, with homogeneous coefficients of degree $m$, 
such that $\omega$ may be written as 
$$\omega=\sum_{\alpha,\beta}\Omega_{\alpha\beta}dz_\alpha
\wedge dz_\beta + d\psi^{(2)}+d\psi^{(3)}+\ldots$$
for some non-degenerate skew-symmetric matrix $\Omega$. We need to find power 
series 
$$\xi_i:=z_i+\sum_{m\geq 2} \xi_i^{(m)},$$
where each $\xi_i^{(m)}$ is a homogeneous polynomial in $z_1,\ldots,z_n$ of degree 
$m$, such that
\begin{equation}\label{eq:DarbouxCondition}
\omega=\sum_{\alpha,\beta}\Omega_{\alpha\beta}d\xi_\alpha\wedge d\xi_\beta.
\end{equation}
This equation is certainly satisfied if the $\xi_\beta^{(m)}$ satisfy the
relations
\begin{equation}\label{eq:DarbouxRecursion1}
\psi^{(m)}-\sum_{i=1}^{m-2} \sum_{\alpha,\beta}\Omega_{\alpha\beta}
\xi_\alpha^{(m-i)} d\xi_{\beta}^{(i+1)}=2\sum_{p,q} \Omega_{pq}\xi_p^{(m)} 
dz_q
\end{equation}
for $m\geq 2$. Let $\iota_{\partial_q}$ denote the partial evaluation 
along the vector field $\partial_q$ satisfying $\partial_q(z_p)=\delta_{pq}$. 
Then \eqref{eq:DarbouxRecursion1} may be recast into the form
\begin{equation}
\xi_p^{(m)}=\frac 12 \sum_q\Omega^{-1}_{pq}\iota_{\partial_q}\left(\psi^{(m)}
-\sum_{i=1}^{m-2} \sum_{\alpha,\beta}\Omega_{\alpha\beta}
\xi_\alpha^{(m-i)} d\xi_{\beta}^{(i+1)}\right)
\end{equation}
which provides a means of computing the $\xi^{(m)}$, $m\geq 2$, recursively.
\end{proof}

\begin{lemma}\label{lem:inheritanceofproperties}
--- Let $(G,V)$ be a representation of a reductive algebraic group, let $w\in V$ be a semisimple element, and let $(G_w,N)$ denote the slice representation associated to $w$. If $(G,V)$ satisfies any of the properties of being
\begin{enumerate}
\item visible,
\item locally free, or
\item stable,
\end{enumerate}
then the same property holds for $(G_w,N)$.
\end{lemma}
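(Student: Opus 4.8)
The plan is to handle each of the three properties separately, using in each case the standard fact from Luna theory that the slice representation $(G_w, N)$ controls the local geometry of $(G,V)$ near the closed orbit $Gw$, together with the explicit description $N = N_w/\gothg w$ where $N_w \subseteq V$ is the orthogonal complement of $\gothg w$ inside a $K$-invariant Hermitian metric (so that $N_w$ is $G_w$-stable and $V \cong G\times_{G_w} N_w$ near $Gw$ in the étale topology). First I would record the following two general principles that will be invoked repeatedly: (a) Luna's étale slice theorem provides a $G_w$-saturated open $\Omega \subseteq N_w$, a $G$-saturated open $\widetilde\Omega \subseteq V$ containing $w$, and an étale $G$-morphism $G\times_{G_w}\Omega \to \widetilde\Omega$ inducing an étale morphism $\Omega\GIT G_w \to V\GIT G$; and (b) the decomposition $N_w = \gothg w \oplus N$ as $G_w$-representations identifies, for $n \in N$ close to $0$, the $G_w$-orbit $G_w n$ with the slice of the $G$-orbit $G(w+n)$, and the fibre of $N\to N\GIT G_w$ through $n$ with (a neighbourhood in) the fibre of $V\to V\GIT G$ through $w+n$.

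For visibility, I would argue as follows. It suffices (as noted in the paper just before Lemma~\ref{lem:VisibilityandModality}) to check that the fibre $\pi_N^{-1}(\pi_N(0))$ of $\pi_N : N\to N\GIT G_w$ over the image of $0$ has finitely many $G_w$-orbits. Since $0 \in N$ maps to the image of $w$ under $V\to V\GIT G$, and since $G$-orbits in $\pi^{-1}(\pi(w))$ are in bijection (in a neighbourhood, via the slice theorem) with $G_w$-orbits in the nullcone-type fibre of $N \to N\GIT G_w$ over $0$, the finiteness of the former forces the finiteness of the latter. A little care is needed because visibility concerns \emph{all} fibres, not just the central one, but the reduction to the central fibre for the slice follows from the same Luna-theoretic dictionary applied fibrewise, or one simply invokes the standard reduction statement. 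This is where I expect the mildest technical friction: making the "finitely many orbits downstairs $\Rightarrow$ finitely many orbits upstairs" precise through the étale slice map, since étale maps are only local homeomorphisms, not global bijections on orbits.

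For local freeness, the argument is shortest: a general point $n \in N$ has, by the slice description, stabiliser $(G_w)_n = G_{w+n}$ equal to the stabiliser in $G$ of the nearby point $w+n \in V$; since $(G,V)$ is locally free, $G_{w+n}$ is finite for $w+n$ general, hence $(G_w)_n$ is finite for $n$ general in $N$, so $(G_w,N)$ is locally free. One must check that "general in $V$ near $w$" translates to "general in $N$", which is immediate because $G\times_{G_w} N_w \to V$ is dominant and étale onto a neighbourhood of $Gw$, so a dense open of $N_w$ (equivalently of $N$, since the $\gothg w$ factor only shifts within the orbit) maps to a dense open of that neighbourhood.

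For stability, recall that $(G,V)$ is stable iff the generic orbit is closed of maximal dimension, equivalently iff the generic fibre of $\pi$ consists of a single closed orbit, equivalently (by the paper's own discussion) iff $r_0 = r'$. I would show that $(G_w,N)$ inherits this by noting that a general point of $N$ corresponds to a general point $w+n$ of $V$ near $w$, whose $G$-orbit is closed (by stability of $(G,V)$, since general orbits are closed) and of maximal dimension; the slice theorem then identifies $G_w(n)$ with the transverse slice, and closedness of $G(w+n)$ together with the fact that $n$ is general forces $G_w(n)$ to be closed in $N$ — concretely, $G_w n = G(w+n)\cap \Omega$ is closed in $\Omega$ as $G(w+n)$ is closed in $\widetilde\Omega$, and closedness is a local-in-the-$G\GIT$ property. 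Maximality of $\dim G_w n$ among $G_w$-orbit dimensions follows because any $G_w$-orbit in $N$ of larger dimension would, via $G\times_{G_w} N_w$, produce a $G$-orbit near $Gw$ of dimension exceeding $\dim G(w+n) = r'$ (after accounting for the $\dim G - \dim G_w$ summand from the $\gothg w$ direction, which is common to all), contradicting maximality of $r'$. The main obstacle overall, such as it is, is purely bookkeeping: throughout one must consistently pass between $N_w$ and $N = N_w/\gothg w$ and keep track of the $\dim\gothg w$ shift in orbit dimensions, and one must phrase the "generic point" transfers so that the étale (rather than isomorphic) nature of the slice map does no harm — which it does not, since all the properties in question are conditions on dense open subsets and on dimensions, both of which are insensitive to étale base change.
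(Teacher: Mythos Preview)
Your approach is correct and essentially matches the paper's: all three parts rest on Luna's slice theory (the orbit correspondence for visibility, dominance of $G\times N\to V$ for local freeness, and Luna's fundamental lemma for stability). The paper is simply terser, citing \cite{PV89} outright for (1) and invoking \cite{Lu73} directly for (3) rather than writing out the general-point and closed-orbit transfers you sketch; your argument for (2) is exactly the paper's, phrased in terms of stabilisers rather than the differential of $(g,n)\mapsto gn$ at $(e,w)$.
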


Note that slice representations are defined for an arbitrary representation thanks to \cite[\S~6.5]{PV89} and that these coincide with slice representations of \cite[\S~2]{DK85}.

\begin{proof} 

(1) This is contained in the Encyclopedia article \cite[Theorem 8.2]{PV89} by Popov and Vinberg.


(2) The image of the differential of the morphism $\gamma:G\times N\to V$, 
$(g,n)\mapsto gn$, at $(e,w)$ equals $\gothg.w+N=V$. Consequently, $\gamma$ is dominant.
Thus, for a general element $n$ in $N$, the stabiliser subgroup in $G$ is finite,
and a fortiori its stabiliser in $G_w$ is finite. Hence $(G_w,N)$ is locally free.

(3) A representation is stable if and only if the set of semisimple elements contains a dense open subset. We deduce the existence of a non-empty (and hence dense) open subset $U\subseteq N$ consisting of closed orbits for the $G_w$ action
from Luna's fundamental lemma \cite[II.2]{Lu73} and \cite[I.3 Lemme]{Lu73}.
\end{proof}

\begin{remark} ---
We will not use this fact in the following but it is worth noting that if the morphism 
$r:\gothc\oplus \gothc^*/W\to (V\oplus V^*\SPR G)_{\red}$ is bijective, then so is the induced morphism 
$\gothc\oplus \gothc^*/W_w\to (N\oplus N^*\SPR G_w)_{\red}$, where $W_w=N_{G_w}(\gothc)/Z_{G_w}(\gothc)$.
\end{remark}

\begin{proposition}\label{prop:EqualCompletedRings}
--- Let $(G,V)$ be a polar representation with Cartan subspace 
$\gothc$. Let $N$ denote a $G_x$-stable complement to $\gothg x$ for a point 
$x\in \gothc$. Then $\widehat\ko_{N\oplus N^*\SPR G_x, [0,0]} \isom 
\widehat\ko_{V\oplus V^*\SPR G, [(x,0)]}.$
\end{proposition}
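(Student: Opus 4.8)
The plan is to combine the two slice-type results just proved: the Symplectic Slice Theorem and the Formal Darboux Theorem. Concretely, write $M = V\oplus V^*$ with its standard symplectic form and the diagonal $G$-action, which is Hamiltonian with moment map $\mu$. The point $(x,0)\in M$ lies in $\mu^{-1}(0)$ because $x\in\gothc$ and $\mu^A(x,0)=0$ for all $A$; moreover its $G$-orbit is closed since $x$ is semisimple in $V$ and hence $(x,0)$ is semisimple in $M$ (its stabiliser is $H:=G_{(x,0)} = G_x$, which is reductive by Matsushima's criterion). Thus the Symplectic Slice Theorem applies at $(x,0)$: choosing a $G$-equivariant embedding $M\subseteq W$ and $H$-equivariant splittings as in the construction preceding that theorem, one gets an $H$-stable smooth affine symplectic slice $U\subseteq S_Y$ through $(x,0)$, with a moment map $\mu'$ for the $H$-action, such that the natural morphism $U\SPR H\to M\SPR G$ is étale at $[(x,0)]$. Étaleness gives an isomorphism on completed local rings, $\widehat\ko_{U\SPR H,\,[(x,0)]}\isom \widehat\ko_{M\SPR G,\,[(x,0)]}$.

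Next I would identify the tangent space $T_{(x,0)}U$ with the slice double $N\oplus N^*$. Since $M=V\oplus V^*$ and the $G$-action is diagonal, $\gothg(x,0) = \gothg x\oplus 0$, so $(\gothg(x,0))^\perp = (\gothg x)^\perp \oplus V^*$ and $T_{(x,0)}U = (\gothg(x,0))^\perp/\gothg(x,0) = \big((\gothg x)^\perp/\gothg x\big)\oplus V^*$. Taking $N$ to be the given $G_x$-stable complement to $\gothg x$ in $V$ (equivalently, the slice representation), we have $N\cong (\gothg x)^\perp/\gothg x$ as $H$-modules, and $V^* \cong (\gothg x)^*\oplus N^*$ with the first summand pairing off against $\gothg x$; the constant symplectic form $\omega^{(0)}$ on $T_{(x,0)}U$ is exactly the standard symplectic form on $N\oplus N^*$, and the induced linear moment map is the moment map for the $H$-action on the double $N\oplus N^*$. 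Hence the symplectic reduction of $(T_{(x,0)}U,\omega^{(0)})$ by $H$ is $N\oplus N^*\SPR G_x$.

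Then the Formal Darboux Theorem supplies regular parameters $\xi_1,\dots,\xi_n\in\widehat\ko_{U,(x,0)}$ giving an isomorphism $u:\widehat\ko_{T_{(x,0)}U,0}\to\widehat\ko_{U,(x,0)}$ with $u^*\omega=\omega^{(0)}$. This isomorphism is not a priori $H$-equivariant, but since $H$ is reductive one can average the chosen parameters over $H$ (as in the standard equivariant Darboux / formal linearisation argument) to arrange that $u$ is $H$-equivariant while still satisfying $u^*\omega = \omega^{(0)}$; then $u$ carries the moment map $\widehat{\mu'}$ to the linear moment map on $T_{(x,0)}U$ up to the formal coordinate change, so it induces an isomorphism of formal null-fibres and, passing to $H$-invariants (using $(-)^H$ exactness for reductive $H$ and the fact that $H$-invariants commute with completion at the fixed point $[(x,0)]$), an isomorphism $\widehat\ko_{U\SPR H,\,[(x,0)]}\isom\widehat\ko_{T_{(x,0)}U\SPR H,\,[0]} = \widehat\ko_{N\oplus N^*\SPR G_x,\,[0,0]}$. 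Composing with the étale isomorphism from the first step yields $\widehat\ko_{N\oplus N^*\SPR G_x,\,[0,0]}\isom\widehat\ko_{V\oplus V^*\SPR G,\,[(x,0)]}$, as desired.

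The main obstacle is the equivariance bookkeeping in the middle step: making the formal Darboux isomorphism $H$-equivariant and then checking that it really does intertwine the two moment maps (so that it descends to the completed coordinate rings of the null-fibres, not just of the ambient formal schemes). The étale statement and the identification of the slice double are essentially formal given the results already in the excerpt; it is the compatibility of the Darboux normalisation with the $H$-action and with $\mu'$ — together with the interchange of "take $H$-invariants" and "complete at $[(x,0)]$", which is where reductivity of $H$ and the closedness of the orbit are used — that requires care.
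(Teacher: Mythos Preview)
Your approach matches the paper's: apply the Symplectic Slice Theorem at $(x,0)$, identify the tangent space of the symplectic slice with $N\oplus N^*$, and invoke the Formal Darboux Theorem. One small correction to your computation: the symplectic orthogonal of $\gothg(x,0)=\gothg x\oplus 0$ in $V\oplus V^*$ is $V\oplus N^*$ (the condition $\omega((v,\varphi),(Ax,0))=\varphi(Ax)=0$ constrains the $V^*$-component, not the $V$-component), so the quotient is $(V\oplus N^*)/(\gothg x\oplus 0)\cong N\oplus N^*$; the paper reaches the same identification by intersecting the affine slice $(x,0)+(N\oplus V^*)$ with $(\gothg x)^\perp$. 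Your concern about $H$-equivariance of the Darboux isomorphism is well taken---the paper simply asserts the conclusion without comment---but it is standard and is more safely obtained by carrying the $H$-action through the recursion in the proof of the Formal Darboux Theorem (choosing the $\psi^{(m)}$ and hence the $\xi^{(m)}$ equivariantly at each step) rather than by averaging the final parameters, which need not preserve $u^*\omega=\omega^{(0)}$.
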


\begin{proof} The stabiliser of the point $(x,0)\in V\oplus V^*$ is $G_x$, and
$N\oplus V^*$ is a $G_x$-stable slice to the orbit. The annihilator of $\gothg x$
with respect to the symplectic structure is $(\gothg x)^\perp=\gothg x\oplus N^*$, 
so that $N\oplus V^*)\cap (\gothg x)^{\perp}=N\oplus N^*$. Hence the assertion
follows from the symplectic slice theorem and the formal Darboux theorem.
\end{proof}

\section{Proof of Theorem \ref{thm:maintheorem}}

We will now prove Theorem \ref{thm:maintheorem} from the introduction. Let $(G,V)$ denote a stable locally free and visible polar representation.

Pan\-yushev \cite[Thm.~3.2]{Pan94} showed that for locally free, stable, visible
polar representations the null-fibre $\mu^{-1}(0)$ is a reduced and irreducible 
complete intersection of expected dimension $2\dim V-\dim G=\dim V+\dim \gothc$.
In particular, the symplectic reduction is a variety, i.e.\ reduced and irreducible.
According to Propositions \ref{prop:finiteinjective} and \ref{prop:ontoC0} 
and Lemma \ref{lem:IrreducibilityCriterion}, the morphism 
$r:\gothc\oplus\gothc\dual/W\to V\oplus V^*\SPR G$ is the normalisation
of the symplectic reduction. Hence showing that $r$ is an isomorphism is equivalent 
to proving that $V\oplus V^*\SPR G$ is normal. (Unfortunately, it is not true in general that the null-fibre itself is normal; see Remark \ref{rk_normal_muz}.)

We will argue by induction on the {\sl rank}
$$\rk(G,V)=\dim \gothc -\dim (\gothc\cap V^G).$$ 
Because of the stability assumption, $\rk(G,V)=0$ is equivalent to $V$ being
a trivial representation.
Note that we can in fact always assume that $V$ contains no trivial summand;
for otherwise we may decompose $V$ as $V=\IC^\ell\oplus V_0$ with a 
representation $V_0$ that is again stable, locally free, visible, and polar. Clearly,
the corresponding Cartan subspaces are related by $\gothc=\IC^\ell\oplus \gothc_0$,
and the symplectic reductions by $V\oplus V^*\SPR G= \IC^{2\ell}\times
(V_0\oplus V_0^*)\SPR G$.

If $\rk(G,V)=1$, the assertion follows from Proposition \ref{prop:rank1case}.

Assume now that $\rk(G,V)\geq 2$, that $V$ contains no trivial summand, and
that the assertion has been proved for all stable, locally free, visible polar
representations of lower rank.

Let $(G_w,N)$ denote the slice representation for some point $w\in \gothc\setminus\{0\}$.
According to Lemma \ref{lem:inheritanceofproperties}, the slice representation
is again visible, locally free and stable. Since $w\in \gothc\cap N^{G_w}$, one has
$$\rk(G_w,N)=\dim\gothc-\dim(\gothc\cap N^{G_w})<\dim \gothc=\rk(G,V).$$
By our induction hypothesis, $N\oplus N^*\SPR G_w$ is normal. By Zariski's theorem,
\cite[Vol.~II, p.\ 320]{ZS58} the completion of the local ring
$\ko_{N\oplus N^*\SPR G_w,[0,0]}$ is normal. By Proposition \ref{prop:EqualCompletedRings}
the completion of $\ko_{V\oplus V^*\SPR G,[w,0]}$ is normal. Hence the symplectic
reduction is normal in a neighbourhood $B$ of the image of $(\gothc\setminus\{0\})
\times \{0\}$ and, for symmetry reasons, also in a neighbourhood $B'$ of the image
of $\{0\}\times(\gothc\dual\setminus\{0\})$. 

Consider the action of $\IC^*\times\IC^*$ on $V\oplus V^*$ given by $(t_1,t_2).(x,\varphi)
=(t_1x,t_2\varphi)$. It commutes with the $G$-action and preserves the null-fibre.
Therefore, it descends to an action on the symplectic reduction. Clearly, the orbit
of any point other than the origin in the symplectic reduction meets $B$
or $B'$. This implies that $(V\oplus V^*\SPR G)\setminus \{[0,0]\}$ is normal.

Since the rank of $(G,V)$ is at least $2$, the codimension of the origin in 
the symplectic reduction is at least $4$, so that the symplectic reduction is
regular in codimension 1. Using Serre's criterion for normality, it suffices
to show that the symplectic reduction satisfies property $(S_2)$. According 
to a theorem of Crawley-Boevey \cite[Thm.~7.1]{CB03}, it suffices to verify the
following assumptions in order to conclude that $V\oplus V^*\SPR G$ has property
$(S_2)$: (i) $\mu^{-1}(0)$ has property $(S_2)$; (ii) $(V\oplus V^*\SPR G)\setminus\{0\}$
has property $(S_2)$; (iii) $Z:=q^{-1}(0)\cap \mu^{-1}(0)$ has codimension at least $2$ 
in $\mu^{-1}(0)$, where $q:V\oplus V^*\to V\oplus V^*\GIT G$ denotes the quotient
map. 

As we have already seen that (i) and (ii) hold, it suffices to bound the codimension
of $Z:=q^{-1}(0)\cap \mu^{-1}(0)$. Any point in $Z$ is of the form $(n,\varphi)$,
where $n\in V$ is nilpotent, i.e.\ $0\in \overline{Gn}$ and $\varphi\in 
(\gothg n)^\perp$. Since $V$ is visible, there are only finitely many nilpotent 
orbits $Gn_1, \ldots, Gn_s$ in $V$, so 
$Z\subseteq \bigcup_{i=1}^s G.(\{n_i\}\times (\gothg n_i)^{\perp})$ and 
$\dim Z\leq \max\{ \dim \gothg n_i+(\dim V-\dim \gothg n_i)\}=\dim V$. As 
$\dim \mu^{-1}(0)=\dim V +\dim \gothc\geq \dim V +2$, the theorem is proved.\qed

\begin{remark}
The proof above can be adapted to show that for a visible polar representation $(G,V)$, if the morphism $r:\gothc\oplus \gothc^*/W\to (V\oplus V^*\SPR G)_{\red}$ is bijective, then $(V\oplus V^*\SPR G)_{\red}$ is smooth in codimension $1$. In particular, by Proposition \ref{prop:theta}, the conjecture holds for an arbitrary $\theta$-representation if and only if $(V\oplus V^*\SPR G)_{\red}$ satisfies Serre's condition $(S_2)$. 
\end{remark}

\section{Examples and Counterexamples} \label{examples}

We first discuss several cases where the conjecture holds but which are not
covered by general results. The tools are representation theory and classical
invariant theory.

\begin{example}--- The conjecture holds for the representations
$(\LieSl_n, S^2\IC^n)$ and $(\LieSl_{n}, \Lambda^2\IC^{n})$. These are polar
but not $\theta$-representations. The cases are very similar, and we will
give details only for $(\LieSl_{n},\Lambda^2\IC^{n})$ with $n$ even.

Let $n=2m$ and identify both $V=\Lambda^2\IC^n$ and its dual $V^*$ with the
space of skew-symmetric matrices, so that the action of $\LieSl_n$ on 
$V\oplus V^*$ is given by $g.(A,B)=(gAg^t, (g^t)^{-1}Bg^{-1})$. Through this
identification, the moment map becomes $\mu(A,B)=AB-\frac1n\tr(AB)I_n$, so 
that 
$$\mu^{-1}(0)=\{(A,B)\;|\; AB=\frac1n \tr(AB)I_n\}.$$
On the other hand, we see from Schwarz's Table 1a in \cite{Sch78} that the
invariant algebra is generated by the Pfaffians $X=\pf(A)$ and $Y=\pf(B)$ and
the traces $Z_i=\frac1n \tr((AB)^i)$ for $1\leq i\leq m-1$. In the coordinate
ring $\IC[\mu^{-1}(0)]$ of the null-fibre, the restrictions of these invariants 
satisfy the relations $Z_i= Z_1^i$ and $XY=Z_1^m$ and no more. It follows that
$V\oplus V^*\SPR \LieSl_n$ is a normal surface singularity of type $A_{m-1}$.
So the conjecture holds.
\end{example}

\begin{example}--- The conjecture holds for the standard representations
$(G_2,\IC^7)$ and $(F_4,\IC^{26})$ and the spin representation $(\LieSpin_7,\IC^8)$.
In all cases, the null-fibre of the moment map is non-reduced, but the 
symplectic reduction is normal. Again, the cases are very similar, and we give 
details only for the height-dimensional spin representation $V$. As $V$ has a 
$\LieSpin_7$-invariant non-degenerate quadratic form $q$, the representation is 
self-dual, $V\isom V^*$. According to Schwarz \cite[Thm.~4.3]{Sch07}, the 
invariant algebra $\IC[V\oplus V]^{\LieSpin_7}$ is generated by the invariants
$A(v+v')=q(v)$ and $B(v+v')=q(v,v')$, the polarisation of $v$, and $C(v+v')=q(v')$.
The quadratic part of the coordinate ring $\IC[V\oplus V]$ equals 
$S^2V\oplus V\tensor V\oplus S^2V$, and the summand $V\tensor V$ that contains
the generators of the ideal $I$ of the null-fibre $\mu^{-1}(0)$ further decomposes
into 
$$V\tensor V=S^2V\oplus \Lambda^2V\isom (V_{(0,0,0)}\oplus V_{(0,0,2)})
\oplus (V_{(0,1,0)}\oplus V_{(1,0,0)}),$$
where $V_{(k_1,k_2,k_3)}$ denotes the irreducible representation with highest weight
$k_1\varpi_1+k_2\varpi_2+k_3\varpi_3$. One can check that $I$ is in fact generated
by $V_{(0,1,0)}$, and that $J^2\subseteq I\subsetneq J$, where $J$ is the ideal
generated by $V_{(0,1,0)}\oplus V_{(0,0,2)}$. In particular, $\mu^{-1}(0)$ is
non-reduced. A direct computation with \textsc{Singular} \cite{DGPS}
or \textsc{Macaulay2} \cite{Mac2} shows that $\IC[\mu^{-1}(0)]^{\LieSpin_7}=
\IC[A,B,C]/(AC-B^2)$ is a normal ring. 
\end{example}

\begin{example}\label{ex:Spin9} --- The conjecture holds for the spin representation
$(\LieSpin_9, V=\IC^{16})$. In this example, even the symplectic reduction
is non-reduced, but $(V\oplus V^*\SPR \LieSpin_9)_{\red}$ is normal. As
before, $V_{(k_1,k_2,k_3,k_4)}$ will denote the irreducible representation
of highest weight $k_1\varpi_1+k_2\varpi_2+k_3\varpi_3+k_4\varpi_4$. In 
particular, the spin representation is $V=V_{(0,0,0,1)}$. As for $\LieSpin_7$,
there is an invariant quadratic form $q$ on $V$ that allows the identification of $V$
and $V^*$, and whose polarisations provide invariants of bidegree $(2,0)$,
$(1,1)$ and $(0,2)$ in $\IC[V\oplus V]^{\LieSpin_9}$. It is well known that
the null-fibre $\mu^{-1}(0)$ has two irreducible components (cf.\ 
\cite[\S~4]{Pan04}). Let $I\subseteq \IC[V\oplus V]$ denote the vanishing 
ideal of the null-fibre. It is generated by the 36-dimensional summand
$V_{(0,1,0,0)} (\isom \Lieso_9)$ in the decomposition
$$\IC[V\oplus V]_2\supseteq V\tensor V\isom (V_{(0,0,0,0)}\oplus V_{(0,0,0,2)}\oplus
V_{(1,0,0,0)})\oplus ( V_{(0,0,1,0)}\oplus V_{0,1,0,0}).$$
Moreover, according to Schwarz \cite[Table 3a]{Sch78}, the invariant ring
$\IC[V\oplus V]^{\LieSpin_9}$ is generated by the aforementioned invariants
$A$, $B$ and $C$ together with and an invariant $D$ of bidegree $(2,2)$. The explicit 
description of $D$ is more involved. It arises as the unique invariant quadratic
form on $V_{0,1,0,0}\isom \Lambda^3\IC^9$. 

Using \textsc{Singular} or \textsc{Macaulay2}, one can check that the invariant
$P=AC-B^2$ satisfies $P\not\in I$ but $P^2\in I$, so that $V\oplus V^*\SPR
\LieSpin_9$ is not reduced.

From Tevelev's Theorem \cite{Tev00} we already know that the conjecture holds
for the spin representation of $\LieSpin_9$ since it can be realized as the 
isotropy representation of the symmetric space $F_4/\LieSpin_9$, i.e.\ a 
$\theta$-representation of order $m=2$. Explicitly, this can be seen as follows.
First, one verifies (for instance by using the software \textsc{LiE} \cite{LCL})
that $V\tensor V\tensor V_{(0,1,0,0)}$ contains a unique copy of the trivial
representation, and thus the bidegree-$(2,2)$ component $I_{(2,2)}$ of the
ideal $I$ contains at most one invariant. Using \textsc{Macaulay2}, one computes
the rank of the multiplication map $\kappa:I_{(1,1)}\tensor I_{(1,1)}\to 
I_{(2,2)}$ to be 666. The only dimension match for a submodule in $V\tensor V
\tensor V_{(0,1,0,0)}$ is $\Imag(\kappa)=V_{(0,0,0,0)}\oplus V_{(0,0,0,2)}\oplus 
V_{(0,2,0,0)}\oplus V_{(2,0,0,0)}$. In particular, $I_{(2,2)}^{\LieSpin_9}=
\IC Q$ for some linear combination $Q=\alpha D\oplus \beta AC+\gamma B^2$ with 
$\alpha,\beta,\gamma\in\IC$. A \textsc{Macaulay2} calculation gives 
$I\cap \IC[A,B,C]=(A,B,C)(AC-B^2)$. Hence the coefficient $\alpha$ must be 
non-zero. It follows that 
$I^{\LieSpin_9}=(Q,A(AC-B^2),B(AC-B^2),C(AC-B^2))$ and $\sqrt{I^{\LieSpin_9}}
=(Q,AC-B^2)$, whence 
$$\IC[\mu^{-1}(0)_{\red}]^{\LieSpin_9}=\IC[A,B,C,D]/(Q,AC-B^2)=\IC[A,B,C]/(AC-B^2),$$
a normal ring.
\end{example}

\begin{remark} --- 
One of the features of the representation $\LieSpin_9$ is that $\mu^{-1}(0)$ is reducible. Hence one might ask whether for every isotropy representation $(G,V)$ of a symmetric space whose commuting scheme $\mu^{-1}(0)$ is reducible we always have that $V\oplus V^*\SPR G$ is non-reduced. The answer is actually negative. For instance, if $$(G,V)=(\LieGl(V_1) \times \LieGl(V_2), \Hom(V_1,V_2) \times \Hom(V_2,V_1))$$ 
with $\dim V_1=1<\dim V_2$, then $(G,V)$ is the isotropy representation of a symmetric space \cite[Ex.~4.3]{PY07}, $\mu^{-1}(0)$ is reduced and has three irreducible components \cite[\S~2.2.1]{Ter12}, and $V\oplus V^*\SPR G$ is a normal variety \cite[\S~2.2.2]{Ter12}. 
\end{remark}

The following proposition indicates that the visibility assumption in Conjecture \ref{conj}
should not be dropped.

\begin{proposition}\label{prop:nonvisible} --- Let $(G,V)$ be a polar 
representation of a reductive algebraic group $G$ such that $V$ contains a non-trivial 
representation $E$ with multiplicity greater than or equal to $2$. Then $V$ is non-visible and 
$\gothc\oplus\gothc\dual/W$ maps to a proper closed subset of 
$(V\oplus V^*\SPR G)_{\red}$. 
\end{proposition}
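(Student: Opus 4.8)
The plan is to exploit the hypothesis that $E$ occurs in $V$ with multiplicity $\geq 2$ in order to produce, inside the null-fibre $\mu^{-1}(0)$, a large family of closed $G$-orbits that are \emph{not} of the form $G(w,\varphi)$ with $(w,\varphi)\in\gothc\oplus\gothc\dual$, and whose image in the symplectic reduction therefore lies outside $r(\gothc\oplus\gothc\dual/W)$. Write $V=E^{\oplus 2}\oplus V'$ (as $G$-modules, where $V'$ absorbs the remaining isotypic components, possibly including further copies of $E$; two copies suffice). Then $V\oplus V^*\supseteq E\oplus E\oplus E^*\oplus E^*$, and inside this subspace one can form pairs $m=(v_1,v_2,\varphi_1,\varphi_2)$ with $v_i\in E$, $\varphi_j\in E^*$. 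First I would check that many such $m$ satisfy $\mu(m)=0$: for the diagonal $G$-action on $E\oplus E\oplus E^*\oplus E^*$ one has $\mu^A(m)=\varphi_1(Av_1)+\varphi_2(Av_2)$, so $\mu(m)=0$ is a $\dim\gothg$-codimensional linear condition on a $4\dim E$-dimensional space, and it is satisfied for instance whenever $\varphi_1=\varphi_2=0$, or more usefully on a large linear subspace.

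Next I would single out a point $m$ whose $G$-orbit is closed and whose stabiliser is small, yet which is manifestly not $G$-conjugate into $\gothc\oplus\gothc\dual$. The key point is that Proposition~\ref{prop:finiteinjective} gives $Gm\cap(\gothc\oplus\gothc\dual)=Wm$ and that $r$ is finite and injective, so $\dim\gothc\oplus\gothc\dual=\dim V\GIT G=\dim(V\oplus V^*\SPR G)_{\red}$ — wait, that equality is exactly what we must \emph{not} have; rather $\dim\gothc\oplus\gothc\dual=2\dim\gothc$. I would instead argue by a dimension count: on one hand $\dim r(\gothc\oplus\gothc\dual/W)=2\dim\gothc\le 2\dim V\GIT G$; on the other hand I would exhibit a closed $G$-stable subset $Y\subseteq\mu^{-1}(0)$, built from the two copies of $E$ as above, such that the closed orbits in $Y$ form a family of dimension strictly larger than $2\dim\gothc$ after passing to the quotient. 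Concretely: take $v_1,v_2$ generic in the two copies of $E$ with $\varphi_1=\varphi_2=0$; the semisimple part of such a pair is a point of $V$ not conjugate into $\gothc$ unless it already was, and by genericity one arranges that the $G$-closure of these points in $V\GIT G$ has dimension $>\dim\gothc$ precisely because $E^{\oplus 2}$ contributes a strictly larger invariant ring than a single copy would — this is where the $\mathrm{mult}\ge 2$ hypothesis bites. The non-visibility is then automatic: having two copies of a nontrivial $E$, the fibre of $V\to V\GIT G$ over a generic point of the $E^{\oplus 2}$-directions contains a positive-dimensional family of orbits (scaling/rotating one copy against the other inside $\mathrm{Stab}$), contradicting the finite-orbit condition; this is a classical fact and I would cite \cite{DK85} or \cite{PV89} for it.

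The cleanest route, and the one I would actually write, is: (i) observe $\gothc$ meets each isotypic component $E^{\oplus k}$ of $V$ in a subspace of dimension at most $1$ when $E$ is nontrivial — this follows because $\gothc$ consists of semisimple elements and a Cartan subspace of $E^{\oplus k}$ under $G$ (or the relevant subfactor) is one-dimensional in the directions of a single irreducible, a consequence of \cite[Thm.~2.3]{DK85} applied to the slice/restriction; hence the ``width'' of $\gothc$ in the $E$-direction is $1$ while $E^{\oplus 2}$ has room for a $2$-dimensional abelian family of semisimple elements that is \emph{not} abelian/closed-orbit-compatible in the required way — more precisely, $r$ being injective and finite, $\dim\gothc\oplus\gothc\dual=2\dim\gothc$, whereas $\mu^{-1}(0)\GIT G$ contains the image of $\{(e_1,e_2,0,0):e_i\in E\}\cap\mu^{-1}(0)$, of dimension $\ge 2\dim E-\dim\gothg x>2\dim\gothc$ for $E$ nontrivial with multiplicity $\ge2$ by a rank comparison. (ii) Conclude $r(\gothc\oplus\gothc\dual/W)\subsetneq(V\oplus V^*\SPR G)_{\red}$ on dimension grounds, and (iii) deduce non-visibility from the fact that a polar representation containing a nontrivial $E$ with multiplicity $\ge2$ has infinitely many orbits in a generic fibre of $V\to V\GIT G$ (again \cite{DK85}).

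The main obstacle will be step (i): making rigorous the claim that $\dim(V\oplus V^*\SPR G)_{\red}$ strictly exceeds $2\dim\gothc$. One must show that the closure of $G\cdot(E\oplus E\oplus 0\oplus 0\cap\mu^{-1}(0))$ inside $\mu^{-1}(0)$ maps \emph{onto} a subvariety of the symplectic reduction of dimension $>2\dim\gothc$, which requires a genuine estimate of the dimension of the closed-orbit locus there — equivalently, of $\dim(E^{\oplus2}\GIT G)$ relative to $\dim\gothc$. I expect this to be handled by noting that $E^{\oplus 2}$ (for $E$ nontrivial irreducible) always has $\dim(E^{\oplus 2}\GIT G)\ge 2$ while a single copy contributes at most $1$ to $\dim\gothc$, so $\dim\gothc\le \dim V\GIT G$ cannot keep pace once multiplicity exceeds $1$ together with at least one further semisimple direction — but pinning down the precise inequality in full generality (not just for $V=E^{\oplus2}$) is the delicate point, and I would likely reduce to a slice at a generic semisimple point of the $E$-directions to isolate the contribution of the two copies of $E$ cleanly.
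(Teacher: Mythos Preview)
Your approach has a genuine gap. The core of your strategy is a dimension count built on the family $\{(e_1,e_2,0,0):e_i\in E\}\subseteq\mu^{-1}(0)$, but this family contributes \emph{nothing} outside $r(\gothc\oplus\gothc\dual/W)$. Indeed, every point of $V\times\{0\}$ lies in $\mu^{-1}(0)$, and the closed $G$-orbits in $V\times\{0\}$ are exactly (closed orbit in $V)\times\{0\}$. Since $V$ is polar, each such closed orbit meets $\gothc$, hence meets $\gothc\times\{0\}\subseteq\gothc\oplus\gothc\dual$. So the image of $E\oplus E\oplus\{0\}\oplus\{0\}$ in the symplectic reduction lies entirely inside $C_0\GIT G$. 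Your related claim that ``$E^{\oplus 2}$ contributes a strictly larger invariant ring'' cannot help either: by the very definition of polar, $\dim\gothc=\dim V\GIT G$, so $\gothc$ already accounts for all $G$-invariants on $V$. The inequality $\dim(V\oplus V^*\SPR G)_{\red}>2\dim\gothc$ you are after may well be true, but it cannot be extracted from the $V$-side alone; one must genuinely use the interaction between the two copies of $E$ and the two copies of $E^*$.

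The paper's proof does precisely that, and is quite different from your proposal. It produces a \emph{single} explicit point rather than a dimension estimate. Pick a nonzero nilpotent $y\in E$ (which exists since $E$ is nontrivial), and via Hilbert--Mumford choose a one-parameter subgroup $\lambda$ contracting $y$ to $0$; then pick $\psi\in E^*$ in the strictly positive $\lambda$-weight part with $\psi(y)\neq 0$, so $\psi$ is nilpotent too. Set $w=(y,y,0)\in V$ and $\varphi=(\psi,-\psi,0)\in V^*$. The anti-diagonal sign makes $\mu(w,\varphi)=\psi(Ay)-\psi(Ay)=0$. The $G$-invariant $(a,b,c;\alpha,\beta,\gamma)\mapsto\alpha(a)$ takes the nonzero value $\psi(y)$ on $(w,\varphi)$, so $(w,\varphi)$ is not nilpotent in $V\oplus V^*$. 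But the projections $w$ and $\varphi$ are each nilpotent (their $G$-orbit closures contain $0$), and this persists for the representative $(w',\varphi')$ of the unique closed orbit in $\overline{G(w,\varphi)}$. If that closed orbit met $\gothc\oplus\gothc\dual$ it would consist of semisimple elements, forcing $w'=0$ and $\varphi'=0$, contradicting the nonvanishing invariant. Hence $[w,\varphi]\notin r(\gothc\oplus\gothc\dual/W)$.

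Your non-visibility sketch is also looser than needed. The clean argument is: for a nonzero nilpotent $y\in E$, the elements $(y,ty,0)\in V$ for varying $t\in\IC$ are nilpotent and pairwise non-conjugate (if $g(y,ty)=(y,sy)$ then $gy=y$ forces $ty=sy$), so the null-cone of $V$ has infinitely many orbits.
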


\begin{proof} Consider a decomposition $V=E\oplus E\oplus V'$. Since $E$ is a 
non-trivial representation, the general fibre, and hence every fibre, of the 
quotient map $E\to E\GIT G$ has positive dimension. In particular, there exists
a nilpotent element $y\in E \setminus \{0\}$, i.e.\ an element with $0\in\overline{Gy}$. For 
different values of $t\in\IC$, the pairs $(y,t y)$ belong to different 
nilpotent $G$-orbits in $E\oplus E$. Thus, the number of orbits in the null-fibre
of $V\to V\GIT G$ is infinite, and $V$ is not visible.

By the Hilbert-Mumford criterion \cite[Ch.~2, \S 1]{MF82}, there exists
a one-parameter subgroup $\lambda:\IG_m\to G$ such that $\lim_{t\to 0} \lambda(t)y=0$.
If $E=\bigoplus_{n\in\IZ} E_\lambda(n)$ and $E^*=\bigoplus_{n\in\IZ} E_\lambda(n)^*$
are the associated weight space decompositions of $E$ and $E^*$ one has $y\in
\bigoplus_{n>0}E_\lambda(n)$. We may choose $\psi\in \bigoplus_{n>0}E^*_\lambda(n)$
with $\psi(y)\neq 0$. Then $\psi$ is nilpotent as well. Consider the elements
$w=(y,y,0)\in V$ and $\varphi=(\psi,-\psi,0)\in V^*$. By construction, 
$w+\varphi \in \mu^{-1}(0)$.
The invariant $V\oplus V^*\ni (a,b,c)+(\alpha,\beta,\gamma)\mapsto \alpha(a)$ takes
the non-zero value $\psi(y)$ at the point $w+\varphi$, so that $w+\varphi$ cannot 
be nilpotent. Let $w'+\varphi'$ be a representative of the closed orbit in 
$\overline{G(w+\varphi)}$. Then $w'$ and $\varphi'$ must have the forms
$w'=(y',y',0)$ and $\varphi'=(\psi',\psi',0)$ with nilpotent elements $y'$ and $\psi'$
satisfying $\psi'(y')=\psi(y)\neq 0$. In particular, the orbit $w'+\varphi'$ does not
meet $\gothc\oplus \gothc\dual$. Hence $C_0\GIT G$ is a proper closed subset of
the symplectic reduction.
\end{proof}

\begin{example}\label{ex visible stable locally free not theta}
Here we construct a family of examples of visible stable locally free polar representations which are not $\theta$-representations.

Fix $n\geq 1$. Take an $n$-dimensional torus $G=\IG_m^n$ acting on $V=\IC^{n+1}$ with weights $(a_1,a_2,\ldots,a_n, -\sum_{i=1}^na_i)$, where $a_1,\ldots,a_n$ are linearly independent. It is easily checked that the representation $(G,V)$ is visible, stable and locally free. As the invariant ring $k[V]^G$ is generated by a single polynomial (the product of the $n+1$ coordinate vectors), it is automatically polar (every non-zero semisimple element spans a Cartan space).
Using Vinberg's theory \cite{Vin76}, one can check that if $(G,V)$ is a $\theta$-representation, then there is a simple graded Lie algebra $(\tilde\gothg,\theta)$ such that $G=\tilde{G}_0$ and $V=\tilde{\gothg}_1$. From the classification of $\theta$-representations by Kac diagrams one deduces that there are only finitely many possibilities for $(\tilde{G}_0,\tilde{\gothg}_1)$ up to isomorphism but there is certainly an infinite number of weights $a_1, \ldots, a_n$ as above. For instance, for $n=1$ the representation $(G,V)$ is a $\theta$-representation if and only if $a_1=\pm 2$.

This family of examples provides a hint that the class of polar representations for a general reductive Lie algebra should be much richer than the class of $\theta$-representations and underlines the importance of working in the general context of polar representations. 
\end{example}

\begin{example}\label{ex nonvisible} --- 
Here are several examples of non-visible polar representations to which Proposition \ref{prop:nonvisible} applies. 
\begin{enumerate}
\item Let $(G,V)=(\LieSp_n, \IC^n \oplus \IC^n)$ with $n\geq 2$ even. Then $V\oplus V^*\SPR G$ is the union of two irreducible normal surfaces $C_0\GIT G$ and $F$; see \cite[\S~3.5]{Ter12} for details. 
\item Let $(G,V)=(\LieSl_2, \IC^2 \oplus \IC^2)$, which is a stable representation with a locally free action. Then one may check that $\mu^{-1}(0)$ is a five-dimensional complete intersection with two irreducible components (meeting in the four-dimensional singular locus) and that $V\oplus V^*\SPR G$ is the union of two normal surfaces $C_0\GIT G$ and $F$ with a single isolated $A_1$-singularity each and meeting in the singular point; see \cite[\S~3]{Bec09} for details. More generally, we can consider $(G,V)=(\LieSl_n, (\IC^n)^{\oplus n})$, but then the general description of $\mu^{-1}(0)$ is more involved. 
\item Let $(G,V)=(\LieSpin_{10}, E \oplus E)$, where $E$ is the spin representation. Then $V\oplus V^*\SPR G$ is of dimension at least $4$ while $C_0\GIT G$ is of dimension $2$.
\end{enumerate}
\end{example}

\begin{remark} --- 
The previous example suggests that a weaker form of the conjecture might be true for arbitrary polar representations provided that we replace ${(V\oplus V^*\SPR G)}_{\mathrm{red}}$ by $C_0\GIT G$ in the statement.
\end{remark}

\noindent \textbf{Acknowledgments.}
We would like to thank Dmitry Kaledin, Peter Littelmann, Dimitri Panyushev, and Christoph Sorger for interesting discussions related to this work.
We also thank the referee for useful remarks, in particular for pointing out that some previously made assumptions in Lemma \ref{lem:inheritanceofproperties} were unnecessary.
The second author gratefully acknowledges the support of the DFG through the research grants Le 3093/2-1 and Le 3093/3-1.
The third author was supported by SFB Transregio 45 ”Periods, Moduli Spaces and Arithmetic of Algebraic Varieties”.
The fourth author is grateful to the Max-Planck-Institut f\"{u}r Mathematik of Bonn for the warm hospitality and support provided during the writing of this paper.
Part of the genesis of this paper occurred during the half-semester programme 
\emph{Algebraic Groups and Representations} supported by Labex MILYON/ANR-10-LABX-0070.


\bibliographystyle{alpha}

\end{document}